\providecommand{\U}[1]{\protect\rule{.1in}{.1in}}
\newcounter{exer}
\newcounter{exera}
\theoremstyle{definition}
\newtheorem{theo}{Theorem}[section]
\newenvironment{theorem}[1][]
{\begin{theo}[#1]\begin{leftbar}}
{\end{leftbar}\end{theo}}
\newtheorem{lem}[theo]{Lemma}
\newenvironment{lemma}[1][]
{\begin{lem}[#1]\begin{leftbar}}
{\end{leftbar}\end{lem}}
\newtheorem{prop}[theo]{Proposition}
\newenvironment{proposition}[1][]
{\begin{prop}[#1]\begin{leftbar}}
{\end{leftbar}\end{prop}}
\newtheorem{defi}[theo]{Definition}
\newenvironment{definition}[1][]
{\begin{defi}[#1]\begin{leftbar}}
{\end{leftbar}\end{defi}}
\newtheorem{remk}[theo]{Remark}
\newenvironment{remark}[1][]
{\begin{remk}[#1]\begin{leftbar}}
{\end{leftbar}\end{remk}}
\newtheorem{coro}[theo]{Corollary}
\newenvironment{corollary}[1][]
{\begin{coro}[#1]\begin{leftbar}}
{\end{leftbar}\end{coro}}
\newtheorem{conv}[theo]{Convention}
\newtheorem{quest}[theo]{Question}
\newtheorem{warn}[theo]{Warning}
\newtheorem{conj}[theo]{Conjecture}
\newtheorem{exam}[theo]{Example}
\newenvironment{example}[1][]
{\begin{exam}[#1]\begin{leftbar}}
{\end{leftbar}\end{exam}}
\newtheorem{exmp}[exer]{Exercise}
\newtheorem{exetwo}[exera]{Additional exercise}
\newenvironment{statement}{\begin{quote}}{\end{quote}}
\let\sumnonlimits\sum
\let\prodnonlimits\prod
\let\cupnonlimits\bigcup
\let\capnonlimits\bigcap
\renewcommand{\sum}{\sumnonlimits\limits}
\renewcommand{\prod}{\prodnonlimits\limits}
\renewcommand{\bigcup}{\cupnonlimits\limits}
\renewcommand{\bigcap}{\capnonlimits\limits}
\newenvironment{noncompile}{}{}
\newcommand{\Fq}{\ensuremath{\mathbb{F}_q}\xspace}
\newcommand{\N}{\ensuremath{\mathbb{N}}\xspace}
\newcommand{\myvec}[1]{\ensuremath{\boldsymbol{#1}}\xspace}
\newcommand{\ccc}{\myvec{c}}
\newcommand{\bbb}{\myvec{b}}
\newcommand{\eee}{\myvec{e}}
\newcommand{\pp}{\mathbbm{p}}
\newcommand{\mm}{\mathbbm{a}}
\renewcommand{\P}{\myvec{P}}
\newcommand{\K}{\mathbb{K}}
\newcommand{\Sp}{{ ({\K}^n)^{{\Z}}}}
\newcommand{\Z}{\mathbb{Z}}
\newcommand{\zetaki}{\Z/k_i\Z}
\newcommand{\zetapk}{\Z/p^k\Z}
\newcommand{\zetapku}{\Z/p^{k_1}\Z}
\newcommand{\zetapki}{\Z/p^{k_i}\Z}
\newcommand{\zetapkn}{\Z/p^{k_n}\Z}
\newcommand{\LP}{\ensuremath{\K[X,X^{-1}]}\xspace}
\newcommand{\LPG}{\ensuremath{G[X,X^{-1}]}\xspace}
\newcommand{\LPGt}{\ensuremath{\hat{G}[X,X^{-1}]}\xspace}
\newcommand{\LPKuno}{\ensuremath{\Z/p^{k_1}\Z\left[X,X^{-1}\right]}\xspace}
\newcommand{\emb}{\psi}
\newcommand{\Emb}{\Psi}
\newcommand{\ie}{i.e.\@\xspace}
\newcommand{\modulo}[2]{\left[ #1\right]_{#2}}
\newcommand{\z}{\mathbb{Z}}
\newcommand{\n}{\mathbb{N}}
\newcommand{\az}{S^{\mathbb{Z}}}
\newcommand\arxiv[1]{\href{http://www.arxiv.org/abs/#1}{\texttt{arXiv:#1}}}
\begin{document}

\title{Integrality of matrices, finiteness of matrix semigroups, and dynamics of linear and additive cellular automata}
\author{Alberto Dennunzio%
\thanks{Dipartimento di Informatica, Sistemistica e Comunicazione,
  Universit\`a degli Studi di Milano-Bicocca,
  Viale Sarca 336/14, 20126 Milano, Italy;
  \texttt{\href{mailto:alberto.dennunzio@unimib.it}{alberto.dennunzio@unimib.it}}}
\and Enrico Formenti%
\thanks{Universit\'e C\^ote d'Azur, CNRS, I3S, France;
  \texttt{\href{mailto:enrico.formenti@unice.fr}{enrico.formenti@univ-cotedazur.fr}}}
\and Darij Grinberg%
\thanks{Drexel University, 15 S 33rd Street, Philadelphia PA, 19104, USA;
Mathematisches Forschungsinstitut Oberwolfach, Schwarzwaldstr. 9-11, 77709 Oberwolfach-Walke, Germany;
Institut Mittag-Leffler, Aurav\"agen 17, SE-182 60 Djursholm, Sweden;
  \texttt{\href{mailto:darijgrinberg@gmail.com}{darijgrinberg@gmail.com}}}
\and Luciano Margara%
\thanks{Department of Computer Science and Engineering, University of Bologna, Campus of Cesena, Via dell'universit\`a 50, Cesena, Italy;
  \texttt{\href{mailto:luciano.margara@unibo.it}{luciano.margara@unibo.it}}}
}
\date{
\today
}

\maketitle

\begin{abstract}

\textbf{Abstract.}
Let $\mathbb{K}$ be a finite commutative ring,
and let $\mathbb{L}$ be a commutative $\mathbb{K}$-algebra.
Let $A$ and $B$ be two $n \times n$-matrices over $\mathbb{L}$
that have the same characteristic polynomial.
The main result of this paper (Thm.~\ref{thm.finpowmat.main}) states that
the set $\left\{ A^0,A^1,A^2,\ldots\right\}$ is
finite if and only if the set $\left\{ B^0,B^1,B^2,\ldots\right\}$ is finite.
We apply this result to the theory of discrete time dynamical systems.
Indeed, it gives a complete and easy-to-check characterization of sensitivity to initial conditions and equicontinuity for linear cellular automata over the alphabet $\mathbb{K}^n$ for $\mathbb{K} = \mathbb{Z}/m\Z$ (Theorem~\ref{froblca}), \ie, cellular automata in which the local rule is defined by $n\times n$-matrices with elements in $\mathbb{Z}/m\Z$.

To prove our main result, we rederive a classical integrality criterion for matrices (Thm.~\ref{thm.finpowmat.char-int} and Prop.~\ref{prop.finpowmat.char-int-conv}).
Namely, let $\mathbb{K}$ be any commutative ring (not necessarily finite), and let $\mathbb{L}$ be a commutative $\mathbb{K}$-algebra.
Consider any $n \times n$-matrix $A$ over $\mathbb{L}$.
Then, $A \in \mathbb{L}^{n \times n}$ is integral over $\mathbb{K}$ (that is, there exists a monic polynomial $f \in \mathbb{K}\left[t\right]$ satisfying $f\left(A\right) = 0$) if and only if all coefficients of the characteristic polynomial of $A$ are integral over $\mathbb{K}$.

Furthermore, we extend the decidability result concerning sensitivity and equicontinuity to the wider class of additive cellular automata over a finite abelian group. For such cellular automata, we also prove the decidability of injectivity, surjectivity, topological transitivity and all the properties (as, for instance, ergodicity) that are equivalent to the latter. The proof of each of  these decidabilty results exploits the corresponding decidability result for linear cellular automata.

\vspace{1.2pc}

\textbf{Keywords.}
 integrality,
 linear algebra over rings,
 commutative algebra,
 Cayley--Hamilton theorem,
 finiteness,
 semigroups,
 cellular automata,
 additive cellular automata,
 linear cellular automata,
 decidability,
 discrete dynamical systems,
 equicontinuity,
 sensitivity to the initial conditions,
 injectivity,
 surjectivity,
 topological transitivity,
 ergodicity
\end{abstract}

\tableofcontents

\subsection*{Acknowledgments}

DG thanks the Mathematisches Forschungsinstitut Oberwolfach for
its hospitality during part of the writing process.
He also thanks the math.stackexchange user named
``punctured dusk'' for
\href{https://math.stackexchange.com/a/3357622/}{informing him}
of the appearance of Theorem~\ref{thm.finpowmat.char-int} and
Proposition~\ref{prop.finpowmat.char-int-conv} in
\cite{Bourba72}.

\section{\label{chp.finpowmat}On matrices with finitely many distinct powers}

\subsection{The main theorem}

We recall a standard definition from linear algebra:\footnote{%
Here and in the following, $\mathbb{N}$ denotes the set
$\left\{0,1,2,\ldots\right\}$.}

\begin{definition}
\label{def.finpowmat.char}Let $\mathbb{K}$ be a commutative ring. Let
$n\in\mathbb{N}$. Let $A$ be an $n\times n$-matrix over $\mathbb{K}$. Then,
the \textit{characteristic polynomial} $\chi_{A}$ of $A$ is defined to be the
polynomial $\det\left(  tI_{n}-A\right)  \in\mathbb{K}\left[  t\right]  $.
Here, $I_{n}$ stands for the $n\times n$ identity matrix, and $tI_{n}-A$ is
considered as an $n\times n$-matrix over the polynomial ring $\mathbb{K}%
\left[  t\right]  $.
\end{definition}

Our goal in this section is to prove the following theorem:

\begin{theorem}
\label{thm.finpowmat.main}Let $\mathbb{K}$ be a finite commutative ring. Let
$\mathbb{L}$ be a commutative $\mathbb{K}$-algebra. Let $n\in\mathbb{N}$. Let
$A$ and $B$ be two $n\times n$-matrices over $\mathbb{L}$ such that $\chi
_{A}=\chi_{B}$. Then, the set $\left\{  A^{0},A^{1},A^{2},\ldots\right\}  $ is
finite if and only if the set $\left\{  B^{0},B^{1},B^{2},\ldots\right\}  $ is finite.
\end{theorem}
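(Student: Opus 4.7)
The plan is to reduce the statement to the integrality criterion (Theorem~\ref{thm.finpowmat.char-int}) by showing that, over a \emph{finite} base ring $\mathbb{K}$, the set of powers of a matrix is finite if and only if the matrix is integral over $\mathbb{K}$. Once this is established, the result is immediate: both $A$ and $B$, having the same characteristic polynomial, will be integral over $\mathbb{K}$ simultaneously (by Theorem~\ref{thm.finpowmat.char-int}, since that criterion depends only on the coefficients of the characteristic polynomial), and hence will have finite sets of powers simultaneously.

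First I would prove the following key lemma: \emph{Let $\mathbb{K}$ be a finite commutative ring, let $\mathbb{L}$ be a commutative $\mathbb{K}$-algebra, let $n\in\mathbb{N}$, and let $M\in\mathbb{L}^{n\times n}$. Then $\{M^0,M^1,M^2,\ldots\}$ is finite if and only if $M$ is integral over $\mathbb{K}$.} The forward direction is easy: if the powers of $M$ form a finite set, then $M^p=M^q$ for some $p>q\geq 0$, so $M$ is a root of the monic polynomial $t^p-t^q\in\mathbb{K}[t]$. For the backward direction, suppose $M$ satisfies a monic polynomial $f\in\mathbb{K}[t]$ of degree $d$. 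Then every power of $M$ lies in the $\mathbb{K}$-submodule $\mathbb{K}\cdot I_n+\mathbb{K}\cdot M+\cdots+\mathbb{K}\cdot M^{d-1}$ of $\mathbb{L}^{n\times n}$, because $M^d$ can be rewritten in terms of lower powers by $f(M)=0$ and this rewriting iterates. Since $\mathbb{K}$ is finite, this submodule has at most $|\mathbb{K}|^d$ elements, so $\{M^k\}_{k\geq 0}$ is finite.

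With this lemma in hand, the proof of the theorem is immediate. By the lemma, $\{A^k\}$ is finite iff $A$ is integral over $\mathbb{K}$, and $\{B^k\}$ is finite iff $B$ is integral over $\mathbb{K}$. By Theorem~\ref{thm.finpowmat.char-int}, each of these integrality conditions is equivalent to the statement that every coefficient of the respective characteristic polynomial is integral over $\mathbb{K}$. Since $\chi_A=\chi_B$, the two conditions coincide, and the theorem follows.

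The only real obstacle is the backward direction of the key lemma, but it is entirely routine: it amounts to noting that the subalgebra $\mathbb{K}[M]\subseteq\mathbb{L}^{n\times n}$ is a finitely generated $\mathbb{K}$-module because $M$ is integral, and is therefore a finite set because $\mathbb{K}$ itself is finite. The genuine content of the theorem is transported into Theorem~\ref{thm.finpowmat.char-int}; once that integrality criterion is available, the transition from a condition on $A$ itself to a condition on $\chi_A$ is what makes the statement nontrivial, and the rest of the argument is purely formal.
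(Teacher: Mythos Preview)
Your argument is correct, but you should be aware that you are using both directions of the integrality criterion: Theorem~\ref{thm.finpowmat.char-int} gives only the implication ``$A$ integral $\Rightarrow$ coefficients of $\chi_A$ integral''; for the converse you need Proposition~\ref{prop.finpowmat.char-int-conv}. Both are available in the paper, so this is merely a citation issue, not a gap.

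Your route differs from the paper's. The paper establishes, in Proposition~\ref{prop.finpowmat.char-crit}, the three-way equivalence $\mathcal{U}\Leftrightarrow\mathcal{V}\Leftrightarrow\mathcal{W}$, where $\mathcal{W}$ is the condition that some $t^{2m}-t^m$ is a multiple of $\chi_A$ in $\mathbb{L}[t]$; it then deduces Theorem~\ref{thm.finpowmat.main} from the equivalence $\mathcal{U}\Leftrightarrow\mathcal{W}$, since $\mathcal{W}$ visibly depends only on $\chi_A$. In particular, the paper explicitly avoids Proposition~\ref{prop.finpowmat.char-int-conv} (``We will not need this proposition\ldots''), and instead passes through the finiteness of the ring $\mathbb{M}[t]/(\chi_A)$, where $\mathbb{M}$ is the $\mathbb{K}$-subalgebra generated by the coefficients of $\chi_A$. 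Your approach is more direct and arguably cleaner: you prove $\mathcal{U}\Leftrightarrow\mathcal{V}$ by the elementary observation that $\mathbb{K}[M]$ is a finitely generated $\mathbb{K}$-module and hence a finite set, and then use the full equivalence of Theorem~\ref{thm.finpowmat.char-int} and Proposition~\ref{prop.finpowmat.char-int-conv} to see that $\mathcal{V}$ depends only on $\chi_A$. What the paper's detour through $\mathcal{W}$ buys is an explicit, checkable criterion in terms of $\chi_A$ alone (divisibility of $t^{2m}-t^m$), which may be of independent interest; your route trades that for brevity.
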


\begin{example}
\label{exa.finpowmat.1}
\textbf{(a)} Let $\mathbb{K}=\mathbb{Z}/4$ and $\mathbb{L}=\left(
\mathbb{Z}/4\right)  \left[  x\right]  $ (a polynomial ring) and $n=2$ and
$A=\left(
\begin{array}
[c]{cc}%
1 & x\\
0 & 1
\end{array}
\right)  $ and $B=\left(
\begin{array}
[c]{cc}%
1 & 0\\
0 & 1
\end{array}
\right)  $. Then, $\chi_{A}=\left(  t-1\right)  ^{2}=\chi_{B}$. Hence, Theorem
\ref{thm.finpowmat.main} yields that the set $\left\{  A^{0},A^{1}%
,A^{2},\ldots\right\}  $ is finite if and only if the set $\left\{
B^{0},B^{1},B^{2},\ldots\right\}  $ is finite. And indeed, both of these sets
are finite: The former has $4$ elements, while the latter has $1$.

\textbf{(b)} Now, let $\mathbb{K}=\mathbb{Q}$ and $\mathbb{L}=\mathbb{Q}$ and
$n=2$ and $A=\left(
\begin{array}
[c]{cc}%
1 & 1\\
0 & 1
\end{array}
\right)  $ and $B=\left(
\begin{array}
[c]{cc}%
1 & 0\\
0 & 1
\end{array}
\right)  $. Then, $\chi_{A}=\left(  t-1\right)  ^{2}=\chi_{B}$. The ring
$\mathbb{K}$ is not finite, so Theorem \ref{thm.finpowmat.main} does not apply
here. And we see why: The set $\left\{  B^{0},B^{1},B^{2},\ldots\right\}  $ is
finite, but the set $\left\{  A^{0},A^{1},A^{2},\ldots\right\}  $ is not.
\end{example}

We shall eventually prove Theorem \ref{thm.finpowmat.main}, but first let us
briefly discuss what rings $\mathbb{L}$ it applies to:

\begin{proposition}
\label{prop.when-L-K}Let $\mathbb{L}$ be a commutative ring. Then, the
following two statements are equivalent:

\begin{itemize}
\item \textit{Statement }$\mathcal{K}$\textit{:} There exist a finite
commutative ring $\mathbb{K}$ and a $\mathbb{K}$-algebra structure on
$\mathbb{L}$.

\item \textit{Statement }$\mathcal{M}$\textit{:} There exists a positive
integer $m$ such that $m\cdot1_{\mathbb{L}}=0$. (Here, we denote the unity of
any ring $\mathbb{A}$ by $1_{\mathbb{A}}$.)
\end{itemize}
\end{proposition}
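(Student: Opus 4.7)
The plan is to prove the two implications separately, using only basic facts about ring homomorphisms and the universal property of $\mathbb{Z}$ as an initial object in the category of rings.

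For $\mathcal{K} \Rightarrow \mathcal{M}$: Suppose $\mathbb{L}$ carries a $\mathbb{K}$-algebra structure for some finite commutative ring $\mathbb{K}$. This gives a (unital) ring homomorphism $\phi : \mathbb{K} \to \mathbb{L}$ with $\phi(1_{\mathbb{K}}) = 1_{\mathbb{L}}$. Since $\mathbb{K}$ is finite, the sequence $0, 1_{\mathbb{K}}, 2 \cdot 1_{\mathbb{K}}, 3 \cdot 1_{\mathbb{K}}, \ldots$ cannot be injective in $\mathbb{K}$, so there exist integers $a > b \geq 0$ with $a \cdot 1_{\mathbb{K}} = b \cdot 1_{\mathbb{K}}$. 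Setting $m := a - b$, we get $m \cdot 1_{\mathbb{K}} = 0$, and applying $\phi$ yields $m \cdot 1_{\mathbb{L}} = m \cdot \phi(1_{\mathbb{K}}) = \phi(m \cdot 1_{\mathbb{K}}) = 0$.

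For $\mathcal{M} \Rightarrow \mathcal{K}$: Suppose $m \cdot 1_{\mathbb{L}} = 0$ for some positive integer $m$. The canonical ring homomorphism $\psi : \mathbb{Z} \to \mathbb{L}$, $k \mapsto k \cdot 1_{\mathbb{L}}$, has $m$ in its kernel, so it factors through the quotient $\mathbb{Z}/m\mathbb{Z}$, producing a ring homomorphism $\overline{\psi} : \mathbb{Z}/m\mathbb{Z} \to \mathbb{L}$. This endows $\mathbb{L}$ with the structure of a $(\mathbb{Z}/m\mathbb{Z})$-algebra, and $\mathbb{Z}/m\mathbb{Z}$ is a finite commutative ring since $m$ is a positive integer. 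Thus $\mathbb{K} := \mathbb{Z}/m\mathbb{Z}$ witnesses Statement $\mathcal{K}$.

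Both directions are routine, so there is no real technical obstacle here; the proof is essentially a packaging of the universal property of $\mathbb{Z}$ together with the observation that a ring map out of a finite ring forces its image — and in particular the image of $1$ — to be torsion. I would present the argument in a few lines without any further setup, since both $\phi$ and $\psi$ are immediate from the definitions of an algebra structure and of the unique ring map from $\mathbb{Z}$.
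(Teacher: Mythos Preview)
Your proof is correct and follows essentially the same approach as the paper's: both directions use the canonical map from $\mathbb{Z}$ and factor through $\mathbb{Z}/m\mathbb{Z}$ for $\mathcal{M}\Rightarrow\mathcal{K}$, and both exploit the finiteness of $\mathbb{K}$ to force $1_{\mathbb{K}}$ to be additively torsion for $\mathcal{K}\Rightarrow\mathcal{M}$. The only cosmetic difference is that the paper invokes Lagrange's theorem to take $m=\left|\mathbb{K}\right|$ directly, whereas you use a pigeonhole argument on the sequence $n\cdot 1_{\mathbb{K}}$; this is not a substantive distinction.
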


\begin{proof}
[Proof of Proposition \ref{prop.when-L-K}.]We shall prove the two
implications $\mathcal{K}\Longrightarrow\mathcal{M}$ and $\mathcal{M}%
\Longrightarrow\mathcal{K}$:

\textit{Proof of the implication }$\mathcal{K}\Longrightarrow\mathcal{M}%
$\textit{:} Assume that Statement $\mathcal{K}$ holds. In other words, there
exist a finite commutative ring $\mathbb{K}$ and a $\mathbb{K}$-algebra
structure on $\mathbb{L}$. Consider this ring $\mathbb{K}$ and this structure.

The ring $\mathbb{K}$ is finite. Hence, Lagrange's theorem (applied to the
finite group $\left(  \mathbb{K},+\right)  $) yields $\left\vert
\mathbb{K}\right\vert \cdot a=0$ for each $a\in\mathbb{K}$. Applying this to
$a=1_{\mathbb{K}}$, we obtain $\left\vert \mathbb{K}\right\vert \cdot
1_{\mathbb{K}}=0$. Now, $\left\vert \mathbb{K}\right\vert \cdot
\underbrace{1_{\mathbb{L}}}_{=1_{\mathbb{K}}\cdot1_{\mathbb{L}}}%
=\underbrace{\left\vert \mathbb{K}\right\vert \cdot1_{\mathbb{K}}}_{=0}%
\cdot1_{\mathbb{L}}=0$. Thus, there exists a positive integer $m$ such that
$m\cdot1_{\mathbb{L}}=0$ (namely, $m=\left\vert \mathbb{K}\right\vert $). In
other words, Statement $\mathcal{M}$ holds. This proves the implication
$\mathcal{K}\Longrightarrow\mathcal{M}$.

\textit{Proof of the implication }$\mathcal{M}\Longrightarrow\mathcal{K}%
$\textit{:} Assume that Statement $\mathcal{M}$ holds. In other words, there
exists a positive integer $m$ such that $m\cdot1_{\mathbb{L}}=0$. Consider
this $m$. Then, $\mathbb{Z}/m\mathbb{Z}$ is a finite commutative ring. Now,
the canonical ring homomorphism $\mathbb{Z}\rightarrow\mathbb{L},\ a\mapsto
a\cdot1_{\mathbb{L}}$ factors through the quotient ring $\mathbb{Z}%
/m\mathbb{Z}$ (since it sends $m$ to $m\cdot1_{\mathbb{L}}=0$, and thus its
kernel contains $m$ and therefore the whole ideal $m\mathbb{Z}$). Hence, we
have found a ring homomorphism $\mathbb{Z}/m\mathbb{Z}\rightarrow\mathbb{L}$.
This homomorphism makes $\mathbb{L}$ into a $\mathbb{Z}/m\mathbb{Z}$-algebra
(since $\mathbb{L}$ and $\mathbb{Z}/m\mathbb{Z}$ are commutative). Thus, there
exist a finite commutative ring $\mathbb{K}$ (namely, $\mathbb{Z}/m\mathbb{Z}%
$) and a $\mathbb{K}$-algebra structure on $\mathbb{L}$ (namely, the
$\mathbb{Z}/m\mathbb{Z}$-algebra we have just found). In other words,
Statement $\mathcal{K}$ holds. This proves the implication $\mathcal{M}%
\Longrightarrow\mathcal{K}$.

We have now proven both implications $\mathcal{K}\Longrightarrow\mathcal{M}$
and $\mathcal{M}\Longrightarrow\mathcal{K}$. Thus, Proposition
\ref{prop.when-L-K} is proven.
\end{proof}

Using Proposition \ref{prop.when-L-K}, we can restate Theorem
\ref{thm.finpowmat.main} as follows:

\begin{corollary}
\label{cor.finpowmat.main-rest}Let $\mathbb{L}$ be a commutative ring. Assume
that there exists a positive integer $m$ such that $m\cdot1_{\mathbb{L}}=0$.
Let $n\in\mathbb{N}$. Let $A$ and $B$ be two $n\times n$-matrices over
$\mathbb{L}$ such that $\chi_{A}=\chi_{B}$. Then, the set $\left\{
A^{0},A^{1},A^{2},\ldots\right\}  $ is finite if and only if the set $\left\{
B^{0},B^{1},B^{2},\ldots\right\}  $ is finite.
\end{corollary}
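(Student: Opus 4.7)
The plan is to deduce Corollary \ref{cor.finpowmat.main-rest} directly from Theorem \ref{thm.finpowmat.main} by using Proposition \ref{prop.when-L-K} as a bridge. The hypothesis that there exists a positive integer $m$ with $m\cdot 1_{\mathbb{L}} = 0$ is precisely Statement $\mathcal{M}$ of Proposition \ref{prop.when-L-K}. The proposition therefore supplies Statement $\mathcal{K}$: a finite commutative ring $\mathbb{K}$ together with a $\mathbb{K}$-algebra structure on $\mathbb{L}$. (Tracing the constructive proof of the implication $\mathcal{M} \Longrightarrow \mathcal{K}$, one may in fact take $\mathbb{K} = \mathbb{Z}/m\mathbb{Z}$, but this is not needed for the argument.)

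With this $\mathbb{K}$ fixed, the matrices $A$ and $B$ become $n\times n$-matrices over a commutative $\mathbb{K}$-algebra, and they still satisfy $\chi_A = \chi_B$, since by Definition \ref{def.finpowmat.char} the characteristic polynomial $\chi_A = \det(tI_n - A)$ is computed entirely inside $\mathbb{L}\left[t\right]$ and is wholly independent of any $\mathbb{K}$-algebra structure one might superimpose on $\mathbb{L}$. All hypotheses of Theorem \ref{thm.finpowmat.main} are now in place, and applying it yields exactly the claimed equivalence between finiteness of $\left\{A^0, A^1, A^2, \ldots\right\}$ and finiteness of $\left\{B^0, B^1, B^2, \ldots\right\}$.

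There is no genuine obstacle; the corollary is essentially a cosmetic reformulation that trades the pair $(\mathbb{K},\mathbb{L})$ for the single ring $\mathbb{L}$ with its torsion hypothesis. The only point worth checking is the invariance of $\chi_A$ under change of the ambient base ring, and this is immediate from the definition.
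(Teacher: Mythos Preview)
Your proposal is correct and matches the paper's own approach: the paper does not even give a separate proof of this corollary, presenting it simply as a restatement of Theorem~\ref{thm.finpowmat.main} via Proposition~\ref{prop.when-L-K}. Your remark that $\chi_A$ is independent of any $\mathbb{K}$-algebra structure on $\mathbb{L}$ makes explicit a point the paper takes for granted.
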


\begin{remark}
A converse of this corollary holds as well: Let $\mathbb{L}$ be a commutative
ring for which there is \textbf{no} positive integer $m$ such that
$m\cdot1_{\mathbb{L}}=0$. Let $n\geq2$ be an integer. Then, there exist two
$n\times n$-matrices $A$ and $B$ over $\mathbb{L}$ such that $\chi_{A}%
=\chi_{B}$ and the set $\left\{  A^{0},A^{1},A^{2},\ldots\right\}  $ is
infinite but the set $\left\{  B^{0},B^{1},B^{2},\ldots\right\}  $ is finite.
Such matrices can easily be constructed by imitation of Example
\ref{exa.finpowmat.1} \textbf{(b)}.
\end{remark}

\subsection{Ingredient 1: Finite semigroups}

We now start preparing the ground for the proof of Theorem
\ref{thm.finpowmat.main}. The first ingredient of our proof are two basic
facts about semigroups.

In the following, semigroups will always be written multiplicatively: That is,
if $M$ is a semigroup, then the operation of $M$ will be written as
multiplication (i.e., we will write $ab$ for the image of $\left(  a,b\right)
\in M\times M$ under this operation).

\begin{theorem}
\label{thm.finpowmat.finmon}Let $M$ be a finite semigroup. Let $a\in M$. Then,
there exists a positive integer $m$ such that $a^{m}=a^{2m}$.
\end{theorem}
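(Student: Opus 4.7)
The plan is to exploit the finiteness of $M$ via pigeonhole to obtain an eventual periodicity of the sequence $a,a^{2},a^{3},\ldots$, and then pick $m$ both large enough and divisible enough to satisfy $a^{m}=a^{2m}$.

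First, since $M$ is finite but the sequence $\left(a^{k}\right)_{k\geq 1}$ has infinitely many terms in $M$, the pigeonhole principle yields positive integers $i<j$ with $a^{i}=a^{j}$. Set $d=j-i>0$. Multiplying the equality $a^{i}=a^{i+d}$ on the right by $a$ and using associativity of the semigroup, I get $a^{i+1}=a^{i+1+d}$; an easy induction on $k$ therefore shows $a^{k}=a^{k+d}$ for every integer $k\geq i$. A further induction on $n$ promotes this to $a^{k}=a^{k+nd}$ for all $n\in\mathbb{N}$ and all $k\geq i$.

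Next, I choose $m$ to be any positive multiple of $d$ satisfying $m\geq i$; for concreteness, $m=id$ works (since $d\geq 1$, hence $id\geq i$). Write $m=nd$ with $n\in\mathbb{N}$ positive. Then $2m=m+nd$ with $m\geq i$, so the periodicity statement of the previous paragraph gives $a^{2m}=a^{m+nd}=a^{m}$, as required.

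There is really no main obstacle here beyond being careful with the semigroup setting (no identity, no cancellation), which is why I was deliberate about proving the recursion $a^{k}=a^{k+d}$ only for $k\geq i$ rather than for all $k\geq 1$. Associativity is the only algebraic tool needed, so the argument is entirely combinatorial once pigeonhole produces $i$ and $j$.
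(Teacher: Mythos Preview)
Your proof is correct. The paper does not actually prove this theorem but merely observes that it is equivalent to the existence of an idempotent in the sub-semigroup $\{a^{1},a^{2},a^{3},\ldots\}$ and then cites standard references (\cite{Steinbe16}, \cite{Pin19}); your pigeonhole-and-periodicity argument is exactly the elementary proof those references contain, so you have supplied the details the paper omits.
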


\begin{proof}
[Proof of Theorem \ref{thm.finpowmat.finmon}.]This is simply saying that the
sub-semigroup $\left\{  a^{1},a^{2},a^{3},\ldots\right\}  $ of $M$ contains an
idempotent. But this is well-known. See, e.g., \cite[Corollary 1.2]{Steinbe16}
(where $M$ and $a$ are called $S$ and $u$, respectively) or \cite[Proposition
6.31]{Pin19}.
\end{proof}

\begin{proposition}
\label{prop.finpowmat.sg-fin}Let $M$ be a semigroup. Let $a\in M$. Let $p$ and
$q$ be two positive integers such that $p>q$ and $a^{p}=a^{q}$. Then,
$\left\{  a^{1},a^{2},a^{3},\ldots\right\}  =\left\{  a^{1},a^{2}%
,\ldots,a^{p-1}\right\}  $.
\end{proposition}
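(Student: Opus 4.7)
The plan is to prove the two inclusions of the asserted set equality. One direction, $\left\{a^{1},a^{2},\ldots,a^{p-1}\right\} \subseteq \left\{a^{1},a^{2},a^{3},\ldots\right\}$, is immediate from the definitions. The content lies in the reverse inclusion: I must show that every power $a^{n}$ with $n\geq 1$ equals some $a^{i}$ with $1\leq i\leq p-1$.

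First I would record a ``propagation'' of the hypothesis $a^{p}=a^{q}$. Multiplying this equation by $a^{n-q}$ (on either side) for any integer $n\geq q+1$, and combining with the hypothesis itself in the boundary case $n=q$, yields
\[
a^{n}=a^{n+(p-q)} \qquad \text{for every integer } n\geq q.
\]
Equivalently, $a^{n}=a^{n-(p-q)}$ whenever $n\geq p$. This says that the sequence $\left(a^{n}\right)_{n\geq q}$ is periodic with period dividing the positive integer $p-q$.

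Now I would prove by strong induction on $n$ that $a^{n}\in\left\{a^{1},\ldots,a^{p-1}\right\}$ for every integer $n\geq 1$. The base range $1\leq n\leq p-1$ is trivial. For the inductive step, assume $n\geq p$; the propagation formula gives $a^{n}=a^{n-(p-q)}$, and the new exponent satisfies $n-(p-q)<n$ as well as $n-(p-q)\geq p-(p-q)=q\geq 1$, so the inductive hypothesis applies and places $a^{n-(p-q)}$ in $\left\{a^{1},\ldots,a^{p-1}\right\}$. Hence so does $a^{n}$, and the induction closes.

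The argument is essentially bookkeeping with exponents and I do not foresee any real obstacle. The one point worth attending to is that periodicity only takes effect from exponent $q$ onward, not from exponent $1$; but the target set $\left\{a^{1},\ldots,a^{p-1}\right\}$ automatically contains the ``pre-periodic'' initial segment $a^{1},\ldots,a^{q-1}$ (since $q-1<p-1$), so this causes no complication.
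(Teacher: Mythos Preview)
Your proposal is correct and follows essentially the same approach as the paper: both establish the propagation identity $a^{p+k}=a^{q+k}$ (equivalently $a^{n}=a^{n-(p-q)}$ for $n\geq p$) and then use strong induction on the exponent to reduce any $a^{n}$ into the range $\{a^{1},\ldots,a^{p-1}\}$. The paper interleaves the propagation step into the induction, while you state it upfront, but the argument is the same.
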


This proposition is also well-known, but proving it is easier than finding a reference:

\begin{proof}
[Proof of Proposition \ref{prop.finpowmat.sg-fin}.]We claim that%
\begin{equation}
a^{i}\in\left\{  a^{1},a^{2},\ldots,a^{p-1}\right\}
\ \ \ \ \ \ \ \ \ \ \text{for each positive integer }i.
\label{pf.prop.finpowmat.sg-fin.1}%
\end{equation}

[\textit{Proof of (\ref{pf.prop.finpowmat.sg-fin.1}):} We proceed by strong
induction on $i$. Thus, we fix a positive integer $j$, and we assume that
(\ref{pf.prop.finpowmat.sg-fin.1}) holds for all $i<j$. We must then prove
that (\ref{pf.prop.finpowmat.sg-fin.1}) holds for $i=j$. In other words, we
must prove that $a^{j}\in\left\{  a^{1},a^{2},\ldots,a^{p-1}\right\}  $. If
$j\leq p-1$, then this is obvious; thus, for the rest of this proof, we WLOG
assume that $j>p-1$. Hence, $j\geq p$, so that $j-p\geq0$ and thus
$q+\underbrace{\left(  j-p\right)  }_{\geq0}\geq q$. Hence, $q+\left(
j-p\right)  $ is a positive integer (since $q$ is a positive integer).
Furthermore, $q+\left(  j-p\right)  =j+q-\underbrace{p}_{>q}<j+q-q=j$. Thus,
(\ref{pf.prop.finpowmat.sg-fin.1}) holds for $i=q+\left(  j-p\right)  $ (since
we have assumed that (\ref{pf.prop.finpowmat.sg-fin.1}) holds for all $i<j$).
In other words, we have $a^{q+\left(  j-p\right)  }\in\left\{  a^{1}%
,a^{2},\ldots,a^{p-1}\right\}  $.

But we have $a^{p}=a^{q}$. Thus,%
\begin{equation}
a^{p+k}=a^{q+k}\ \ \ \ \ \ \ \ \ \ \text{for each }k\in\mathbb{N}.
\label{pf.prop.finpowmat.sg-fin.1.pf.1}%
\end{equation}
(Indeed, if $k=0$, then this follows from $a^{p}=a^{q}$; but in the other case
it follows from $a^{p+k}=\underbrace{a^{p}}_{=a^{q}}a^{k}=a^{q}a^{k}=a^{q+k}$.)

We have $j-p\geq0$, thus $j-p\in\mathbb{N}$. Hence, applying
(\ref{pf.prop.finpowmat.sg-fin.1.pf.1}) to $k=j-p$, we find $a^{p+\left(
j-p\right)  }=a^{q+\left(  j-p\right)  }$. But $j=p+\left(  j-p\right)  $, so
that $a^{j}=a^{p+\left(  j-p\right)  }=a^{q+\left(  j-p\right)  }\in\left\{
a^{1},a^{2},\ldots,a^{p-1}\right\}  $. In other words,
(\ref{pf.prop.finpowmat.sg-fin.1}) holds for $i=j$. This completes the
induction step. Thus, (\ref{pf.prop.finpowmat.sg-fin.1}) is proven.]

Now, (\ref{pf.prop.finpowmat.sg-fin.1}) immediately yields that $\left\{
a^{1},a^{2},a^{3},\ldots\right\}  \subseteq\left\{  a^{1},a^{2},\ldots
,a^{p-1}\right\}  $. Combining this with the obvious fact that $\left\{
a^{1},a^{2},\ldots,a^{p-1}\right\}  \subseteq\left\{  a^{1},a^{2},a^{3}%
,\ldots\right\}  $, we obtain $\left\{  a^{1},a^{2},a^{3},\ldots\right\}
=\left\{  a^{1},a^{2},\ldots,a^{p-1}\right\}  $. This proves Proposition
\ref{prop.finpowmat.sg-fin}.
\end{proof}

\subsection{Ingredient 2: Integrality basics}

Our proof will rely on some basic properties of integrality over a commutative
ring. This concept is defined as follows:

\begin{definition}
\label{def.finpowmat.integrality}Let $\mathbb{K}$ be a commutative ring. Let
$\mathbb{L}$ be a $\mathbb{K}$-algebra (not necessarily commutative). An
element $u\in\mathbb{L}$ is said to be \textit{integral over }$\mathbb{K}$ if
and only if there exists a monic polynomial $f\in\mathbb{K}\left[  t\right]  $
such that $f\left(  u\right)  =0$.
\end{definition}

Recall that a polynomial is said to be \textit{monic} if its leading
coefficient is $1$. Definition \ref{def.finpowmat.integrality} generalizes
\cite[Definition 2.1.1]{SwaHun06} from commutative ring extensions to
arbitrary algebras, and generalizes \cite[Definition (10.21)]{AllKle14} from
commutative $\mathbb{K}$-algebras $\mathbb{L}$ to arbitrary $\mathbb{K}%
$-algebras $\mathbb{L}$.

Philosophically, there is a similarity between integral elements of a
$\mathbb{K}$-algebra, and \textquotedblleft finite-order\textquotedblright%
\ elements of a semigroup (i.e., elements $a$ such that the set $\left\{
a^{1},a^{2},a^{3},\ldots\right\}  $ is finite). In Proposition
\ref{prop.finpowmat.char-crit}, we shall see a direct connection between these
two concepts, but even before that, the similarity is helpful as a guide.

\begin{definition}
Let $\mathbb{K}$ be a commutative ring. Let $M$ be a $\mathbb{K}$-module, and
let $n\in\mathbb{N}$.

\textbf{(a)} If $m_{1},m_{2},\ldots,m_{n}$ are $n$ elements of $M$, then we
let $\left\langle m_{1},m_{2},\ldots,m_{n}\right\rangle _{\mathbb{K}}$ denote
the $\mathbb{K}$-submodule of $M$ spanned by $m_{1},m_{2},\ldots,m_{n}$. This
$\mathbb{K}$-submodule is called the $\mathbb{K}$\textit{-linear span} of
$m_{1},m_{2},\ldots,m_{n}$. A similar notation will be used for spans of
infinitely many elements.

\textbf{(b)} We say that the $\mathbb{K}$-module $M$ is $n$\textit{-generated}
if and only if there exist $n$ elements $m_{1},m_{2},\ldots,m_{n}\in M$ such
that $M=\left\langle m_{1},m_{2},\ldots,m_{n}\right\rangle _{\mathbb{K}}$.
\end{definition}

We notice that a $\mathbb{K}$-module $M$ is finitely generated if and only if
there exists some $n\in\mathbb{N}$ such that $M$ is $n$-generated.

We recall one basic fact about finitely generated $\mathbb{K}$-modules:

\begin{lemma}
\label{lem.finpowmat.finger-sur}Let $\mathbb{K}$ be a commutative ring. Let
$M$ and $N$ be two $\mathbb{K}$-modules such that $M$ is finitely generated.
Let $f:M\rightarrow N$ be a surjective $\mathbb{K}$-module homomorphism. Then,
the $\mathbb{K}$-module $N$ is finitely generated.
\end{lemma}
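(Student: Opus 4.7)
The plan is to exhibit explicit generators of $N$ by pushing forward generators of $M$ through $f$. Since $M$ is finitely generated, there exist an $n \in \mathbb{N}$ and elements $m_1, m_2, \ldots, m_n \in M$ with $M = \left\langle m_1, m_2, \ldots, m_n \right\rangle_{\mathbb{K}}$. I claim that $N = \left\langle f(m_1), f(m_2), \ldots, f(m_n) \right\rangle_{\mathbb{K}}$, which will immediately show that $N$ is $n$-generated and hence finitely generated.

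To prove this claim, I would fix an arbitrary $y \in N$ and use surjectivity of $f$ to find some $x \in M$ with $f(x) = y$. Since $M$ is spanned by the $m_i$, there exist scalars $k_1, k_2, \ldots, k_n \in \mathbb{K}$ with $x = \sum_{i=1}^{n} k_i m_i$. Applying the $\mathbb{K}$-linear map $f$ then gives $y = f(x) = \sum_{i=1}^{n} k_i f(m_i)$, which exhibits $y$ as a $\mathbb{K}$-linear combination of $f(m_1), f(m_2), \ldots, f(m_n)$. The reverse inclusion $\left\langle f(m_1), \ldots, f(m_n) \right\rangle_{\mathbb{K}} \subseteq N$ is trivial since each $f(m_i)$ lies in $N$ and $N$ is a $\mathbb{K}$-submodule of itself.

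There is no real obstacle here; this is a one-line verification once the candidate generating set is written down. The only thing worth being pedantic about is invoking $\mathbb{K}$-linearity of $f$ (both additivity and compatibility with scalar multiplication) to push the linear combination through $f$, but this is immediate from the definition of a $\mathbb{K}$-module homomorphism.
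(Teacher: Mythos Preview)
Your proof is correct and follows essentially the same approach as the paper: push forward a finite generating set of $M$ through $f$ and verify that the images generate $N$. The paper phrases the key step slightly more tersely as $f\left(\left\langle m_1,\ldots,m_n\right\rangle_{\mathbb{K}}\right) = \left\langle f(m_1),\ldots,f(m_n)\right\rangle_{\mathbb{K}}$, whereas you spell out the element-chasing, but the argument is the same.
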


\begin{proof}
[Proof of Lemma \ref{lem.finpowmat.finger-sur}.]The $\mathbb{K}%
$-module $M$ is finitely generated. In other words, there exists a finite
list $\left(  m_{1},m_{2},\ldots,m_{n}\right)  $ of elements of $M$ such that
$M=\left\langle m_{1},m_{2},\ldots,m_{n}\right\rangle _{\mathbb{K}}$. Consider
this list.
From $M=\left\langle m_{1},m_{2},\ldots,m_{n}\right\rangle _{\mathbb{K}}$,
we obtain
\[
f\left(  M\right)  =f\left(  \left\langle m_{1},m_{2}%
,\ldots,m_{n}\right\rangle _{\mathbb{K}}\right)  =\left\langle f\left(
m_{1}\right)  ,f\left(  m_{2}\right)  ,\ldots,f\left(  m_{n}\right)
\right\rangle _{\mathbb{K}}
\]
(since $f$ is a $\mathbb{K}$-module
homomorphism). But $f\left(  M\right)  =N$ (since $f$ is surjective). Hence,
$N=f\left(  M\right)  =\left\langle f\left(  m_{1}\right)  ,f\left(
m_{2}\right)  ,\ldots,f\left(  m_{n}\right)  \right\rangle _{\mathbb{K}}$.
Thus, the $\mathbb{K}$-module $N$ is finitely generated. This proves Lemma
\ref{lem.finpowmat.finger-sur}.
\end{proof}

The following fact provides several criteria for when an element of a
commutative $\mathbb{K}$-algebra is integral over $\mathbb{K}$:

\begin{theorem}
\label{thm.finpowmat.int-G10}Let $\mathbb{K}$ be a commutative ring. Let
$\mathbb{L}$ be a commutative $\mathbb{K}$-algebra. Let $n\in\mathbb{N}$. Let
$u\in\mathbb{L}$. Then, the following assertions $\mathcal{A}$, $\mathcal{B}$,
$\mathcal{C}$ and $\mathcal{D}$ are equivalent:

\begin{itemize}
\item \textit{Assertion }$\mathcal{A}$\textit{:} There exists a monic
polynomial $f\in\mathbb{K}\left[  t\right]  $ of degree $n$ such that
$f\left(  u\right)  =0$.

\item \textit{Assertion }$\mathcal{B}$\textit{:} There exist an $\mathbb{L}%
$-module $C$ and an $n$-generated $\mathbb{K}$-submodule $U$ of $C$ such that
$uU\subseteq U$ and such that every $v\in\mathbb{L}$ satisfying $vU=0$
satisfies $v=0$. (Here, we are making use of the fact that each $\mathbb{L}%
$-module canonically becomes a $\mathbb{K}$-module, since $\mathbb{L}$ is a
$\mathbb{K}$-algebra.)

\item \textit{Assertion }$\mathcal{C}$\textit{:} There exists an $n$-generated
$\mathbb{K}$-submodule $U$ of $\mathbb{L}$ such that $1\in U$ and $uU\subseteq
U$.

\item \textit{Assertion }$\mathcal{D}$\textit{:} We have $\mathbb{K}\left[
u\right]  =\left\langle u^{0},u^{1},\ldots,u^{n-1}\right\rangle _{\mathbb{K}}$.
\end{itemize}
\end{theorem}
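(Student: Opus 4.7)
The plan is to establish the chain of implications $\mathcal{A} \Rightarrow \mathcal{D} \Rightarrow \mathcal{C} \Rightarrow \mathcal{B} \Rightarrow \mathcal{A}$, since each arrow but the last is short bookkeeping, and the last is a classical manoeuvre (the ``determinant trick'') that deserves most of the attention.

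For $\mathcal{A} \Rightarrow \mathcal{D}$, if $f(t) = t^n + a_{n-1}t^{n-1} + \cdots + a_0$ is monic of degree $n$ with $f(u) = 0$, then the relation $u^n = -a_{n-1}u^{n-1} - \cdots - a_0$ shows $u^n \in \left\langle u^0,u^1,\ldots,u^{n-1}\right\rangle_{\mathbb{K}}$, and a straightforward induction on $k$ (multiplying this relation by $u^{k-n}$) extends the conclusion to every $u^k$ with $k \geq n$, so $\mathbb{K}\left[u\right] = \left\langle u^0,u^1,\ldots,u^{n-1}\right\rangle_{\mathbb{K}}$. For $\mathcal{D} \Rightarrow \mathcal{C}$, simply take $U = \mathbb{K}\left[u\right]$: the defining equality gives the $n$ generators, and $1 = u^0 \in U$ and $uU \subseteq U$ are immediate. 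For $\mathcal{C} \Rightarrow \mathcal{B}$, take $C = \mathbb{L}$ with the same $U$; the faithfulness condition is automatic since $1 \in U$ forces any $v$ with $vU = 0$ to satisfy $v = v \cdot 1 = 0$.

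The main obstacle is $\mathcal{B} \Rightarrow \mathcal{A}$. Here I would fix a generating list $m_1, m_2, \ldots, m_n$ of $U$ over $\mathbb{K}$ and use $uU \subseteq U$ to write $u m_i = \sum_{j=1}^n a_{ij} m_j$ with $a_{ij} \in \mathbb{K}$. Assembling the coefficients into a matrix $A = (a_{ij}) \in \mathbb{K}^{n \times n}$, the column vector $\vec{m} = (m_1,\ldots,m_n)^T \in C^n$ lies in the kernel of the matrix $uI_n - A \in \mathbb{L}^{n \times n}$. Multiplying on the left by the classical adjugate of $uI_n - A$ produces $\det(uI_n - A) \cdot m_i = 0$ for every $i$, and hence $\det(uI_n - A) \cdot U = 0$. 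The faithfulness hypothesis built into $\mathcal{B}$ then forces $\det(uI_n - A) = 0$ in $\mathbb{L}$.

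It remains to recognise this vanishing determinant as $f(u)$ for a monic polynomial $f \in \mathbb{K}\left[t\right]$ of degree $n$. Setting $f(t) := \det(tI_n - A) = \chi_A(t) \in \mathbb{K}\left[t\right]$, expansion shows $f$ is monic of degree $n$, and the substitution $t \mapsto u$ commutes with the determinant (the determinant is a polynomial expression in the matrix entries, and substitution is a ring homomorphism), so $f(u) = \det(uI_n - A) = 0$. This establishes $\mathcal{A}$. The delicate point to watch is purely bookkeeping: $U$ is a $\mathbb{K}$-submodule of a general $\mathbb{L}$-module $C$, not of $\mathbb{L}$ itself, which is exactly why the hypothesis of $\mathcal{B}$ uses external faithfulness rather than asking that $U$ contain $1$—and it is this flexibility that will later be crucial when we apply the theorem with matrix algebras playing the role of $\mathbb{L}$.
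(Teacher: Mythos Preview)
Your proof is correct and complete; the chain $\mathcal{A}\Rightarrow\mathcal{D}\Rightarrow\mathcal{C}\Rightarrow\mathcal{B}\Rightarrow\mathcal{A}$ via the adjugate (determinant) trick is exactly the classical argument. The paper does not prove this theorem in place but simply cites \cite[Theorem 1.1]{Grinbe19}, so your write-up in fact supplies the details the paper defers; the cited reference proceeds along the same lines as you do.

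One small editorial point: your closing remark about applying the theorem ``with matrix algebras playing the role of $\mathbb{L}$'' is slightly misleading, since the statement requires $\mathbb{L}$ to be commutative. In the paper's later applications (Corollary~\ref{cor.finpowmat.det-crit} and the proof of Theorem~\ref{thm.finpowmat.char-int}), $\mathbb{L}$ remains the commutative base algebra and it is the module $C$ (taken to be an exterior power $\Lambda^n V$) that carries the extra structure. This does not affect the correctness of your argument, only the motivation you attach to assertion $\mathcal{B}$.
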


\begin{proof}
[Proof of Theorem \ref{thm.finpowmat.int-G10}.]
Theorem \ref{thm.finpowmat.int-G10} is precisely
\cite[Theorem 1.1]{Grinbe19} (with $A$, $B$, $X$ and $P$ renamed as $\mathbb{K}%
$, $\mathbb{L}$, $t$ and $f$).
\end{proof}

Note that Theorem \ref{thm.finpowmat.int-G10} is just one of several
\textquotedblleft determinantal tricks\textquotedblright\ used in studying
integrality over rings. See \cite[Chapter V, Section 1.1, Theorem 1]{Bourba72}
or \cite[Theorem 8.1.6]{ChaLoi14} for another. We shall only use the
implications $\mathcal{B}\Longrightarrow\mathcal{A}$ and $\mathcal{A}%
\Longrightarrow\mathcal{D}$ of Theorem \ref{thm.finpowmat.int-G10}.

\begin{noncompile}
TODO: It is probably better to use the more standard versions of the
determinantal trick. I'm using mine for familiarity reasons only. Use, e.g.,
\cite[Proposition (10.23), implication (4) $\Longrightarrow$ (1)]{AllKle14}.
\end{noncompile}

We shall draw the following conclusion from Theorem
\ref{thm.finpowmat.int-G10}:

\begin{corollary}
\label{cor.finpowmat.det-crit}Let $\mathbb{K}$ be a commutative ring. Let
$\mathbb{L}$ be a commutative $\mathbb{K}$-algebra. Let $u\in\mathbb{L}$. Let
$C$ be an $\mathbb{L}$-module. Let $U$ be a finitely generated $\mathbb{K}%
$-submodule of $C$ such that $uU\subseteq U$. Assume that every $v\in
\mathbb{L}$ satisfying $vU=0$ satisfies $v=0$. (Here, we are making use of the
fact that each $\mathbb{L}$-module canonically becomes a $\mathbb{K}$-module,
since $\mathbb{L}$ is a $\mathbb{K}$-algebra.)

Then, $u\in\mathbb{L}$ is integral over $\mathbb{K}$.
\end{corollary}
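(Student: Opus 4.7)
The plan is straightforward: this corollary is essentially a repackaging of the implication $\mathcal{B} \Longrightarrow \mathcal{A}$ of Theorem~\ref{thm.finpowmat.int-G10}, with the only twist being that we are given a \emph{finitely generated} $\mathbb{K}$-submodule rather than an explicitly $n$-generated one. So the first step is to promote ``finitely generated'' to ``$n$-generated'' for some specific $n$.

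Concretely, I would argue as follows. Since $U$ is a finitely generated $\mathbb{K}$-module, there exists some $n \in \mathbb{N}$ and elements $u_{1}, u_{2}, \ldots, u_{n} \in U$ such that $U = \left\langle u_{1}, u_{2}, \ldots, u_{n} \right\rangle_{\mathbb{K}}$. In particular, $U$ is an $n$-generated $\mathbb{K}$-submodule of $C$. Together with the given hypotheses $uU \subseteq U$ and ``$vU = 0 \Longrightarrow v = 0$ for every $v \in \mathbb{L}$'', this shows that Assertion $\mathcal{B}$ of Theorem~\ref{thm.finpowmat.int-G10} holds (for this particular $n$, this $C$, this $U$, and our $u$).

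By the equivalence $\mathcal{B} \Longleftrightarrow \mathcal{A}$ in Theorem~\ref{thm.finpowmat.int-G10}, Assertion $\mathcal{A}$ therefore also holds, \ie, there exists a monic polynomial $f \in \mathbb{K}\left[t\right]$ of degree $n$ such that $f\left(u\right) = 0$. By Definition~\ref{def.finpowmat.integrality}, this means that $u$ is integral over $\mathbb{K}$, which is exactly what we had to show.

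There is no real obstacle to overcome here; the only thing worth mentioning is the innocuous point that Theorem~\ref{thm.finpowmat.int-G10} is stated for a fixed $n$ up front, whereas Corollary~\ref{cor.finpowmat.det-crit} only asks for ``finitely generated''. One simply picks $n$ after the fact (from a generating set of $U$) and feeds it into the theorem; neither $uU \subseteq U$ nor the faithfulness condition depends on the choice of generators.
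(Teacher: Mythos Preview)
Your proof is correct and essentially identical to the paper's first proof: pick $n$ so that $U$ is $n$-generated, observe that Assertion~$\mathcal{B}$ of Theorem~\ref{thm.finpowmat.int-G10} is then satisfied, and deduce Assertion~$\mathcal{A}$, which is precisely integrality of $u$ over $\mathbb{K}$.
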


\begin{proof}
[First proof of Corollary \ref{cor.finpowmat.det-crit}.]The $\mathbb{K}%
$-module $U$ is finitely generated. In other words, it is $n$-generated for
some $n\in\mathbb{N}$. Consider this $n$. Thus, Assertion $\mathcal{B}$ of
Theorem \ref{thm.finpowmat.int-G10} is satisfied. Hence, Assertion
$\mathcal{A}$ of Theorem \ref{thm.finpowmat.int-G10} is satisfied as well
(since Theorem \ref{thm.finpowmat.int-G10} shows that these two assertions are
equivalent). In other words, there exists a monic polynomial $f\in
\mathbb{K}\left[  t\right]  $ of degree $n$ such that $f\left(  u\right)  =0$.
Hence, $u$ is integral over $\mathbb{K}$. This proves Corollary
\ref{cor.finpowmat.det-crit}.
\end{proof}

\begin{proof}
[Second proof of Corollary \ref{cor.finpowmat.det-crit} (sketched).]The
$\mathbb{K}$-module $U$ is finitely generated. In other words, it is
$n$-generated for some $n\in\mathbb{N}$. Consider this $n$.

Consider the commutative $\mathbb{K}$-subalgebra $\mathbb{K}\left[  u\right]
$ of $\mathbb{L}$. Then, the $\mathbb{L}$-module $C$ is a $\mathbb{K}\left[
u\right]  $-module (by restriction). We have $uU\subseteq U$. Using this fact,
it is easy to prove (by induction on $k$) that $u^{k}U\subseteq U$ for each
$k\in\mathbb{N}$. Hence, $fU\subseteq U$ for each $f\in\mathbb{K}\left[
u\right]  $ (since each $f\in\mathbb{K}\left[  u\right]  $ is a $\mathbb{K}%
$-linear combination of the elements $u^{0},u^{1},u^{2},\ldots$, and since $U$
is a $\mathbb{K}$-module). Thus, $U$ is a $\mathbb{K}\left[  u\right]
$-submodule of $C$.

Moreover, we assumed that every $v\in\mathbb{L}$ satisfying $vU=0$ satisfies
$v=0$. Hence, every $v\in\mathbb{K}\left[  u\right]  $ satisfying $vU=0$
satisfies $v=0$ (since $v\in\mathbb{K}\left[  u\right]  \subseteq\mathbb{L}$).
In the parlance of commutative algebra, this is saying that the $\mathbb{K}%
\left[  u\right]  $-module $U$ is faithful. Hence, there is a faithful
$\mathbb{K}\left[  u\right]  $-module which is $n$-generated when considered
as a $\mathbb{K}$-module (namely, $U$). Thus, \cite[Proposition (10.23),
implication (4) $\Longrightarrow$ (1)]{AllKle14} (applied to $R=\mathbb{K}$,
$R^{\prime}=\mathbb{K}\left[  u\right]  $ and $x=u$) shows that there exists a
monic polynomial $f\in\mathbb{K}\left[  t\right]  $ of degree $n$ such that
$f\left(  u\right)  =0$. Hence, $u$ is integral over $\mathbb{K}$. This proves
Corollary \ref{cor.finpowmat.det-crit} again.
\end{proof}

The following proposition is a linear analogue of Proposition
\ref{prop.finpowmat.sg-fin}:

\begin{proposition}
\label{prop.finpowmat.int-fin}Let $\mathbb{K}$ be a commutative ring. Let
$\mathbb{L}$ be a $\mathbb{K}$-algebra. Let $u\in\mathbb{L}$ be integral over
$\mathbb{K}$. Then, there exists a $g\in\mathbb{N}$ such that $\mathbb{K}%
\left[  u\right]  =\left\langle u^{0},u^{1},\ldots,u^{g-1}\right\rangle
_{\mathbb{K}}$.
\end{proposition}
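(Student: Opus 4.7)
The plan is to extract a spanning set directly from an integrality relation, bypassing Theorem~\ref{thm.finpowmat.int-G10} (whose implication $\mathcal{A}\Longrightarrow\mathcal{D}$ would give the result immediately, but only under the assumption that $\mathbb{L}$ is commutative, whereas the proposition allows $\mathbb{L}$ to be noncommutative). Since $u\in\mathbb{L}$ is integral over $\mathbb{K}$, Definition~\ref{def.finpowmat.integrality} supplies a monic polynomial $f\in\mathbb{K}\left[t\right]$ with $f(u)=0$. I set $g:=\deg f$ and write $f\left(t\right)=t^{g}+c_{g-1}t^{g-1}+\cdots+c_{0}$ with $c_{0},c_{1},\ldots,c_{g-1}\in\mathbb{K}$. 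The equation $f(u)=0$ then rearranges to
\[
u^{g} \;=\; -\sum_{i=0}^{g-1} c_{i}\,u^{i} \;\in\; \left\langle u^{0},u^{1},\ldots,u^{g-1}\right\rangle_{\mathbb{K}},
\]
which is the base step of the argument.

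Next, I would prove by strong induction on $k\in\mathbb{N}$ that $u^{k}\in\left\langle u^{0},u^{1},\ldots,u^{g-1}\right\rangle_{\mathbb{K}}$. The cases $k\leq g-1$ are immediate because $u^{k}$ is itself one of the listed generators, and the case $k=g$ is the display above. For $k>g$, the induction hypothesis applied to $k-1$ gives $u^{k-1}=\sum_{i=0}^{g-1} a_{i}\,u^{i}$ for some $a_{0},a_{1},\ldots,a_{g-1}\in\mathbb{K}$; multiplying by $u$ on the left and using that elements of $\mathbb{K}$ are central in $\mathbb{L}$ (which is built into the definition of a $\mathbb{K}$-algebra), one gets $u^{k}=\sum_{i=0}^{g-1} a_{i}\,u^{i+1}$. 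Every summand with $i+1\leq g-1$ already lies in $\left\langle u^{0},\ldots,u^{g-1}\right\rangle_{\mathbb{K}}$, while the single possibly offending term $a_{g-1}u^{g}$ lies there too, by the base step. Hence $u^{k}\in\left\langle u^{0},\ldots,u^{g-1}\right\rangle_{\mathbb{K}}$, completing the induction.

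Finally, every element of $\mathbb{K}\left[u\right]$ is a $\mathbb{K}$-linear combination of the powers $u^{0},u^{1},u^{2},\ldots$ (again using centrality of $\mathbb{K}$ in $\mathbb{L}$), so the inductive conclusion gives $\mathbb{K}\left[u\right]\subseteq\left\langle u^{0},u^{1},\ldots,u^{g-1}\right\rangle_{\mathbb{K}}$; the reverse inclusion is obvious since each $u^{i}$ belongs to $\mathbb{K}\left[u\right]$. There is no real obstacle here beyond organising the induction cleanly and being mildly careful with the noncommutativity of $\mathbb{L}$; the proof never needs $\mathbb{L}$ commutative because all arithmetic takes place inside the commutative subalgebra $\mathbb{K}\left[u\right]$.
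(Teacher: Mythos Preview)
Your proof is correct. The paper's proof and yours share the same mathematical core---expressing $u^{g}$ as a $\mathbb{K}$-linear combination of lower powers via the monic annihilating polynomial, then propagating this to all $u^{k}$---but they are organised differently. The paper first observes that one may WLOG replace $\mathbb{L}$ by its commutative subalgebra $\mathbb{K}[u]$, and then invokes the black-box implication $\mathcal{A}\Longrightarrow\mathcal{D}$ of Theorem~\ref{thm.finpowmat.int-G10} (which is stated only for commutative $\mathbb{L}$). You instead carry out the induction by hand, which makes the argument self-contained and shows transparently why commutativity of $\mathbb{L}$ is irrelevant; the paper's WLOG reduction accomplishes the same thing in one line but at the cost of an external citation.
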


Proposition \ref{prop.finpowmat.int-fin} appears, e.g., in \cite[Proposition
(10.23), implication (1) $\Longrightarrow$ (2)]{AllKle14}. For the sake of
self-containedness, let us prove it as well:

\begin{proof}
[Proof of Proposition \ref{prop.finpowmat.int-fin}.]We first observe that
$\mathbb{K}\left[  u\right]  $ is a $\mathbb{K}$-subalgebra of $\mathbb{L}$,
and that $u\in\mathbb{K}\left[  u\right]  $. The meaning of our assumption
\textquotedblleft$u$ is integral over $\mathbb{K}$\textquotedblright, and also
of our claim \textquotedblleft there exists a $g\in\mathbb{N}$ such that
$\mathbb{K}\left[  u\right]  =\left\langle u^{0},u^{1},\ldots,u^{g-1}%
\right\rangle _{\mathbb{K}}$\textquotedblright, does not depend on whether we
consider $u$ as an element of $\mathbb{L}$ or as an element of $\mathbb{K}%
\left[  u\right]  $. Thus, for the rest of this proof, we can WLOG assume that
$\mathbb{L}=\mathbb{K}\left[  u\right]  $ (since otherwise, we can simply
replace $\mathbb{L}$ by $\mathbb{K}\left[  u\right]  $). Assume this. Then,
$\mathbb{L}$ is commutative (since $\mathbb{K}\left[  u\right]  $ is clearly commutative).

We have assumed that $u$ is integral over $\mathbb{K}$. In other words, there
exists a monic polynomial $f\in\mathbb{K}\left[  t\right]  $ such that
$f\left(  u\right)  =0$. Consider this $f$. Set $n=\deg f$. Then, Assertion
$\mathcal{A}$ of Theorem \ref{thm.finpowmat.int-G10} is satisfied (since $f$
has degree $n$). Hence, Assertion $\mathcal{D}$ of Theorem
\ref{thm.finpowmat.int-G10} is satisfied as well (since Theorem
\ref{thm.finpowmat.int-G10} shows that these two assertions are equivalent).
In other words, we have $\mathbb{K}\left[  u\right]  =\left\langle u^{0}%
,u^{1},\ldots,u^{n-1}\right\rangle _{\mathbb{K}}$. Thus, there exists a
$g\in\mathbb{N}$ such that $\mathbb{K}\left[  u\right]  =\left\langle
u^{0},u^{1},\ldots,u^{g-1}\right\rangle _{\mathbb{K}}$ (namely, $g=n$). This
proves Proposition \ref{prop.finpowmat.int-fin}.
\end{proof}

\begin{theorem}
\label{thm.finpowmat.fin-alg}Let $\mathbb{K}$ be a commutative ring. Let
$\mathbb{L}$ be a commutative $\mathbb{K}$-algebra. Let $u_{1},u_{2}%
,\ldots,u_{m}$ be a finite list of elements of $\mathbb{L}$. Assume that these
$m$ elements $u_{1},u_{2},\ldots,u_{m}$ are all integral over $\mathbb{K}$,
and generate $\mathbb{L}$ as a $\mathbb{K}$-algebra. Then, the $\mathbb{K}%
$-module $\mathbb{L}$ is finitely generated.
\end{theorem}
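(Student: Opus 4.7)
The plan is to induct on $m$, the number of generators. The base case $m=0$ is immediate: $\mathbb{L}=\mathbb{K}\cdot 1$ is $1$-generated as a $\mathbb{K}$-module. For the inductive step, I would set $\mathbb{L}^{\prime}=\mathbb{K}\left[ u_{1},u_{2},\ldots,u_{m-1}\right]$, the $\mathbb{K}$-subalgebra of $\mathbb{L}$ generated by the first $m-1$ elements. The inductive hypothesis, applied to $\mathbb{L}^{\prime}$ in place of $\mathbb{L}$ (note that $u_{1},\ldots,u_{m-1}$ remain integral over $\mathbb{K}$ and generate $\mathbb{L}^{\prime}$ as a $\mathbb{K}$-algebra), gives a finite $\mathbb{K}$-spanning set $a_{1},a_{2},\ldots,a_{p}$ of $\mathbb{L}^{\prime}$.

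Next I would promote the integrality of $u_{m}$ from $\mathbb{K}$ to $\mathbb{L}^{\prime}$: any monic $f\in\mathbb{K}\left[ t\right]$ with $f\left(u_{m}\right)=0$ is also a monic polynomial in $\mathbb{L}^{\prime}\left[ t\right]$ (since $\mathbb{K}\subseteq\mathbb{L}^{\prime}$), so $u_{m}$ is integral over $\mathbb{L}^{\prime}$. Applying Proposition~\ref{prop.finpowmat.int-fin} with $\mathbb{L}^{\prime}$ in the role of $\mathbb{K}$ and $\mathbb{L}$ (this is legal since $\mathbb{L}^{\prime}$ is a commutative $\mathbb{K}$-algebra, hence a commutative ring, and $\mathbb{L}$ is an $\mathbb{L}^{\prime}$-algebra), I obtain some $g\in\mathbb{N}$ with
\[
\mathbb{L}^{\prime}\left[ u_{m}\right]=\left\langle u_{m}^{0},u_{m}^{1},\ldots,u_{m}^{g-1}\right\rangle_{\mathbb{L}^{\prime}}.
\]
But $\mathbb{L}=\mathbb{K}\left[ u_{1},u_{2},\ldots,u_{m}\right]=\mathbb{L}^{\prime}\left[ u_{m}\right]$, so the $g$ powers $u_{m}^{0},\ldots,u_{m}^{g-1}$ generate $\mathbb{L}$ as an $\mathbb{L}^{\prime}$-module.

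The final step is a routine tower-of-generators argument: the $pg$ products $a_{i}u_{m}^{j}$ (with $1\leq i\leq p$ and $0\leq j\leq g-1$) span $\mathbb{L}$ over $\mathbb{K}$, because any $\ell\in\mathbb{L}$ can first be written as $\sum_{j}c_{j}u_{m}^{j}$ with $c_{j}\in\mathbb{L}^{\prime}$, and then each $c_{j}$ can be written as $\sum_{i}k_{ij}a_{i}$ with $k_{ij}\in\mathbb{K}$; substituting yields $\ell=\sum_{i,j}k_{ij}\,a_{i}u_{m}^{j}$.

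I do not anticipate a serious obstacle. The only genuinely substantive input is the lifting of integrality to a polynomial of bounded degree, which is already supplied by Proposition~\ref{prop.finpowmat.int-fin} (resting on Theorem~\ref{thm.finpowmat.int-G10}); everything else is elementary module arithmetic together with an induction. The mildly delicate point to keep honest is simply verifying that the inductive hypothesis can legitimately be reapplied to $\mathbb{L}^{\prime}$ (commutativity and the $\mathbb{K}$-algebra structure are inherited from $\mathbb{L}$), and that $\mathbb{L}^{\prime}\left[ u_{m}\right]$ coincides with $\mathbb{L}$ itself.
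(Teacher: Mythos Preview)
Your proof is correct and complete; the induction on $m$ with the intermediate algebra $\mathbb{L}^{\prime}=\mathbb{K}[u_1,\ldots,u_{m-1}]$ and the tower-of-generators argument is the classical textbook route to this result.

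The paper takes a different path: rather than inducting, it observes that each $\mathbb{K}[u_i]$ is a finitely generated $\mathbb{K}$-module (by Proposition~\ref{prop.finpowmat.int-fin}), forms the tensor product $\mathbb{K}[u_1]\otimes\mathbb{K}[u_2]\otimes\cdots\otimes\mathbb{K}[u_m]$ over $\mathbb{K}$ (which is then finitely generated as a tensor product of finitely generated modules), and notes that the multiplication map $a_1\otimes\cdots\otimes a_m\mapsto a_1a_2\cdots a_m$ surjects onto $\mathbb{L}$; finite generation of $\mathbb{L}$ then follows from Lemma~\ref{lem.finpowmat.finger-sur}. Your approach is arguably more elementary in that it avoids tensor products entirely, and it makes the explicit spanning set $\{a_iu_m^j\}$ visible at each stage; the paper's approach, by contrast, packages the whole argument into a single surjection without an inductive scaffold, at the cost of invoking the tensor-product machinery. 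Both ultimately rest on the same substantive input (Proposition~\ref{prop.finpowmat.int-fin}), and both yield an explicit finite spanning set of the form $\{u_1^{j_1}u_2^{j_2}\cdots u_m^{j_m}\}$ when unwound.
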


Theorem \ref{thm.finpowmat.fin-alg} appears, e.g., in \cite[Theorem (10.28),
implication (2) $\Longrightarrow$ (3)]{AllKle14}. For the sake of
self-containedness, let us prove it as well:

\begin{proof}
[Proof of Theorem \ref{thm.finpowmat.fin-alg}.]Fix $i\in\left\{
1,2,\ldots,m\right\}  $. Then, $u_{i}\in\mathbb{L}$ is integral over
$\mathbb{K}$ (by assumption). Hence, Proposition \ref{prop.finpowmat.int-fin}
(applied to $u=u_{i}$) shows that there exists a $g\in\mathbb{N}$ such that
$\mathbb{K}\left[  u_{i}\right]  =\left\langle u_{i}^{0},u_{i}^{1}%
,\ldots,u_{i}^{g-1}\right\rangle _{\mathbb{K}}$. Thus, the $\mathbb{K}$-module
$\mathbb{K}\left[  u_{i}\right]  $ is finitely generated.

Now, forget that we fixed $i$. We thus have shown that for each $i\in\left\{
1,2,\ldots,m\right\}  $, the $\mathbb{K}$-module $\mathbb{K}\left[
u_{i}\right]  $ is finitely generated. Hence, the $\mathbb{K}$%
-module\footnote{In this proof, the \textquotedblleft$\otimes$%
\textquotedblright\ symbol always means a tensor product over $\mathbb{K}$.}
$\mathbb{K}\left[  u_{1}\right]  \otimes\mathbb{K}\left[  u_{2}\right]
\otimes\cdots\otimes\mathbb{K}\left[  u_{m}\right]  $ is also finitely
generated\footnote{This is because of the following general fact: If
$A_{1},A_{2},\ldots,A_{m}$ are $m$ finitely generated $\mathbb{K}$-modules,
then the $\mathbb{K}$-module $A_{1}\otimes A_{2}\otimes\cdots\otimes A_{m}$ is
also finitely generated. (This fact can be proven as follows: For each
$i\in\left\{  1,2,\ldots,m\right\}  $, we fix a finite family $\left(
a_{i,s}\right)  _{s\in S_{i}}$ of vectors in $A_{i}$ that generates $A_{i}$.
Then, the family
\[
\left(  a_{1,s_{1}}\otimes a_{2,s_{2}}\otimes\cdots\otimes a_{m,s_{m}}\right)
_{\left(  s_{1},s_{2},\ldots,s_{m}\right)  \in S_{1}\times S_{2}\times
\cdots\times S_{m}}%
\]
of vectors in $A_{1}\otimes A_{2}\otimes\cdots\otimes A_{m}$ is finite and
generates the $\mathbb{K}$-module $A_{1}\otimes A_{2}\otimes\cdots\otimes
A_{m}$. Hence, $A_{1}\otimes A_{2}\otimes\cdots\otimes A_{m}$ is finitely
generated.)}. The $\mathbb{K}$-module homomorphism%
\begin{align*}
\pi:\mathbb{K}\left[  u_{1}\right]  \otimes\mathbb{K}\left[  u_{2}\right]
\otimes\cdots\otimes\mathbb{K}\left[  u_{m}\right]   &  \rightarrow
\mathbb{L},\\
a_{1}\otimes a_{2}\otimes\cdots\otimes a_{m}  &  \mapsto a_{1}a_{2}\cdots
a_{m}%
\end{align*}
is surjective (since $u_{1},u_{2},\ldots,u_{m}$ generate $\mathbb{L}$ as a
$\mathbb{K}$-algebra\footnote{To be more precise: The image of $\pi$ is a
$\mathbb{K}$-submodule of $\mathbb{L}$ (since $\pi$ is a $\mathbb{K}$-module
homomorphism). But we have assumed that $u_{1},u_{2},\ldots,u_{m}$ generate
$\mathbb{L}$ as a $\mathbb{K}$-algebra. Thus, each element of $\mathbb{L}$ can
be written as a polynomial in the $u_{1},u_{2},\ldots,u_{m}$ with coefficients
in $\mathbb{K}$ (since $\mathbb{L}$ is commutative). In other words, each
element of $\mathbb{L}$ can be written as a $\mathbb{K}$-linear combination of
products of the form $u_{1}^{n_{1}}u_{2}^{n_{2}}\cdots u_{m}^{n_{m}}$ with
$n_{1},n_{2},\ldots,n_{m}\in\mathbb{N}$. But each of the latter products
belongs to the image of $\pi$ (because $u_{1}^{n_{1}}u_{2}^{n_{2}}\cdots
u_{m}^{n_{m}}=\pi\left(  u_{1}^{n_{1}}\otimes u_{2}^{n_{2}}\otimes
\cdots\otimes u_{m}^{n_{m}}\right)  $ for all $n_{1},n_{2},\ldots,n_{m}%
\in\mathbb{N}$). Hence, each element of $\mathbb{L}$ can be written as a
$\mathbb{K}$-linear combination of elements of the image of $\pi$, and thus
itself belongs to the image of $\pi$ (since the image of $\pi$ is a
$\mathbb{K}$-submodule of $\mathbb{L}$). In other words, $\pi$ is
surjective.}). Hence, Lemma \ref{lem.finpowmat.finger-sur} (applied to
$M=\mathbb{K}\left[  u_{1}\right]  \otimes\mathbb{K}\left[  u_{2}\right]
\otimes\cdots\otimes\mathbb{K}\left[  u_{m}\right]  $, $N=\mathbb{L}$ and
$f=\pi$) shows that the $\mathbb{K}$-module $\mathbb{L}$ is finitely
generated. This proves Theorem \ref{thm.finpowmat.fin-alg}.
\end{proof}

\subsection{Characterizing integral matrices}

The following is a simple consequence of the Cayley--Hamilton theorem:

\begin{proposition}
\label{prop.finpowmat.ch-cor}Let $\mathbb{K}$ be a commutative ring. Let
$n\in\mathbb{N}$. Let $A$ be an $n\times n$-matrix over $\mathbb{K}$. Then,
$A$ is integral over $\mathbb{K}$ (as an element of the $\mathbb{K}$-algebra
$\mathbb{K}^{n\times n}$).
\end{proposition}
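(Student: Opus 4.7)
The plan is to invoke the Cayley--Hamilton theorem directly. Recall from Definition~\ref{def.finpowmat.char} that the characteristic polynomial $\chi_A = \det\left(tI_n - A\right)$ lives in $\mathbb{K}\left[t\right]$, so it is a legitimate candidate for the monic polynomial required by Definition~\ref{def.finpowmat.integrality}.

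First, I would verify that $\chi_A$ is monic of degree $n$. This is a standard fact about determinants of polynomial matrices: when one expands $\det\left(tI_n - A\right)$ via the Leibniz formula (or any other expansion), the unique contribution of top degree in $t$ comes from the diagonal product $\prod_{i=1}^{n} \left(t - A_{ii}\right)$, whose expansion begins with $t^n$. All other terms in the Leibniz expansion involve at least one off-diagonal entry $-A_{ij}$ (with $i \neq j$) and thus have degree strictly less than $n$ in $t$. Hence $\chi_A$ is a monic polynomial of degree $n$ in $\mathbb{K}\left[t\right]$.

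Next, I would apply the Cayley--Hamilton theorem (valid over an arbitrary commutative ring $\mathbb{K}$) to conclude that $\chi_A\left(A\right) = 0$ in the $\mathbb{K}$-algebra $\mathbb{K}^{n \times n}$. Combining this with the monicity established in the previous step, we see that $\chi_A \in \mathbb{K}\left[t\right]$ is a monic polynomial satisfying $\chi_A\left(A\right) = 0$, which is exactly the definition of $A$ being integral over $\mathbb{K}$ (Definition~\ref{def.finpowmat.integrality}).

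There is no real obstacle here, since the two facts used (Cayley--Hamilton over a commutative ring and the computation of the leading term of $\chi_A$) are classical; the proof amounts to assembling them and matching the conclusion against the definition of integrality.
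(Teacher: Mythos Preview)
Your proof is correct and follows essentially the same approach as the paper: both use the characteristic polynomial $\chi_A$ as the required monic polynomial and invoke the Cayley--Hamilton theorem to get $\chi_A(A)=0$. You simply supply a bit more detail on why $\chi_A$ is monic, which the paper takes for granted.
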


\begin{proof}
[Proof of Proposition \ref{prop.finpowmat.ch-cor}.]The characteristic
polynomial $\chi_{A}$ of $A$ is a monic polynomial in $\mathbb{K}\left[
t\right]  $. The Cayley--Hamilton theorem yields $\chi_{A}\left(  A\right)
=0$. Thus, there exists a monic polynomial $f\in\mathbb{K}\left[  t\right]  $
such that $f\left(  A\right)  =0$ (namely, $f=\chi_{A}$). In other words, $A$
is integral over $\mathbb{K}$. This proves Proposition
\ref{prop.finpowmat.ch-cor}.
\end{proof}

It is not hard to prove a generalization of Proposition
\ref{prop.finpowmat.ch-cor}:

\begin{proposition}
\label{prop.finpowmat.char-int-conv}Let $\mathbb{K}$ be a commutative ring.
Let $n\in\mathbb{N}$. Let $\mathbb{L}$ be a commutative $\mathbb{K}$-algebra.
Let $A$ be an $n\times n$-matrix over $\mathbb{L}$. Assume that each
coefficient of the characteristic polynomial $\chi_{A}\in\mathbb{L}\left[
t\right]  $ is integral over $\mathbb{K}$. Then, $A$ is integral over
$\mathbb{K}$ (as an element of the $\mathbb{K}$-algebra $\mathbb{L}^{n\times
n}$).
\end{proposition}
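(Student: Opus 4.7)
The plan is to exploit transitivity of integrality, applied to the commutative $\mathbb{K}$-subalgebra of $\mathbb{L}^{n\times n}$ generated by $A$ together with the coefficients of $\chi_A$. Write $\chi_A = t^n + c_{n-1}t^{n-1} + \cdots + c_1 t + c_0$ with $c_0,c_1,\ldots,c_{n-1} \in \mathbb{L}$. By assumption, each $c_i$ is integral over $\mathbb{K}$, so by Theorem~\ref{thm.finpowmat.fin-alg} the commutative $\mathbb{K}$-subalgebra $\mathbb{K}' := \mathbb{K}\left[c_0,c_1,\ldots,c_{n-1}\right]$ of $\mathbb{L}$ is finitely generated as a $\mathbb{K}$-module.

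Next, I would introduce the subring $R := \mathbb{K}'\left[A\right]$ of $\mathbb{L}^{n\times n}$, obtained by adjoining $A$ to the image of $\mathbb{K}'$ inside $\mathbb{L}^{n\times n}$ via scalar matrices $\ell \mapsto \ell I_n$. Because $\mathbb{L}$ is commutative, scalar matrices commute with every matrix in $\mathbb{L}^{n\times n}$, and $A$ commutes with itself, so $R$ is a \emph{commutative} $\mathbb{K}'$-algebra (even though the ambient $\mathbb{L}^{n\times n}$ is not). The Cayley--Hamilton theorem gives $\chi_A(A) = 0$, and since $\chi_A$ is a monic polynomial in $\mathbb{K}'\left[t\right]$, this shows that $A$ is integral over $\mathbb{K}'$ as an element of the commutative $\mathbb{K}'$-algebra $R$. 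Applying Theorem~\ref{thm.finpowmat.fin-alg} once more (now with base ring $\mathbb{K}'$ and single generator $A$), we conclude that $R$ is finitely generated as a $\mathbb{K}'$-module. Combining this with the fact that $\mathbb{K}'$ is finitely generated as a $\mathbb{K}$-module, a routine tensor-of-generators argument (just as in the proof of Theorem~\ref{thm.finpowmat.fin-alg}) yields that $R$ is finitely generated as a $\mathbb{K}$-module.

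Finally, I would apply Corollary~\ref{cor.finpowmat.det-crit} with the commutative $\mathbb{K}$-algebra taken to be $R$ itself, the element $u$ taken to be $A \in R$, the $R$-module $C$ taken to be $R$ (acting on itself by left multiplication), and the $\mathbb{K}$-submodule $U$ taken to be all of $R$. Then $U$ is a finitely generated $\mathbb{K}$-submodule of $C$, we have $AU \subseteq U$ trivially, and any $v \in R$ with $vU = 0$ satisfies $v = v\cdot 1 = 0$, so the faithfulness hypothesis is automatic. The corollary therefore yields a monic $f \in \mathbb{K}\left[t\right]$ with $f(A) = 0$ in $R$, hence also in $\mathbb{L}^{n\times n}$, which is exactly what we want. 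The only slightly delicate point is verifying that $R$ is commutative (so that the corollary applies at all), which hinges on the commutativity of $\mathbb{L}$; everything else is a direct assembly of the tools established earlier in the section.
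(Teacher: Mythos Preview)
Your proof is correct and follows essentially the same approach as the paper's: the paper's $\mathbb{M}$ is your $\mathbb{K}'$, its $\mathbb{M}[A]$ is your $R$, and the final appeal to Corollary~\ref{cor.finpowmat.det-crit} with $C=U=R$ is identical. The only cosmetic difference is that the paper invokes Corollary~\ref{cor.finpowmat.int-then-finger} and an explicit surjection $\mathbb{M}^n\to\mathbb{M}[A]$ to get finite generation of $R$ over $\mathbb{K}$, whereas you apply Theorem~\ref{thm.finpowmat.fin-alg} a second time and then use transitivity of finite generation---but these amount to the same argument.
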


We will not need this proposition, so we banish its proof into Section
\ref{sec.finpowmat.converse}. However, we will use its converse:

\begin{theorem}
\label{thm.finpowmat.char-int}Let $\mathbb{K}$ be a commutative ring. Let
$n\in\mathbb{N}$. Let $\mathbb{L}$ be a commutative $\mathbb{K}$-algebra. Let
$A$ be an $n\times n$-matrix over $\mathbb{L}$. Assume that $A$ is integral
over $\mathbb{K}$ (as an element of the $\mathbb{K}$-algebra $\mathbb{L}%
^{n\times n}$). Then, each coefficient of the characteristic polynomial
$\chi_{A}\in\mathbb{L}\left[  t\right]  $ is integral over $\mathbb{K}$.
\end{theorem}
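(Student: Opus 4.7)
\textbf{Proof plan for Theorem \ref{thm.finpowmat.char-int}.}
The strategy is to reduce integrality of the coefficients of $\chi_{A}$ to integrality of its ``roots'' in a suitably constructed splitting algebra, and to show that each such root satisfies a monic polynomial over $\mathbb{K}$ of the form $f^{n}$, where $f$ is the monic polynomial witnessing integrality of $A$.

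First I would fix a monic $f\in\mathbb{K}\left[t\right]$ of degree $d$ with $f\left(A\right)=0$, and derive the divisibility $\chi_{A}\left(t\right)\mid f\left(t\right)^{n}$ in $\mathbb{L}\left[t\right]$. The algebraic identity $f\left(x\right)-f\left(y\right)=\left(x-y\right)q\left(x,y\right)$ in $\mathbb{K}\left[x,y\right]$ can be specialized to the commuting pair $\left(tI_{n},A\right)$ inside the matrix ring $\mathbb{L}^{n\times n}\left[t\right]$, giving $f\left(t\right)I_{n}-f\left(A\right)=\left(tI_{n}-A\right)P\left(t\right)$ for some matrix $P\left(t\right)$ with entries in $\mathbb{L}\left[t\right]$. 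Using $f\left(A\right)=0$ and taking determinants yields $f\left(t\right)^{n}=\chi_{A}\left(t\right)\cdot\det P\left(t\right)$, so that $f^{n}=\chi_{A}\cdot h$ in $\mathbb{L}\left[t\right]$ with $h:=\det P\left(t\right)$.

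Next, I would build a splitting algebra $\mathbb{M}$ of $\chi_{A}$ over $\mathbb{L}$ by iteratively adjoining roots: set $\mathbb{L}_{0}=\mathbb{L}$ and $\mathbb{L}_{i+1}=\mathbb{L}_{i}\left[t\right]/\left(g_{i}\right)$, where $g_{0}=\chi_{A}$ and $g_{i+1}$ is the monic quotient of $g_{i}$ by $\left(t-\lambda_{i+1}\right)$, with $\lambda_{i+1}$ the image of $t$. Since each step adjoins a root of a monic polynomial of positive degree, the extension $\mathbb{L}_{i}\hookrightarrow\mathbb{L}_{i+1}$ is free of positive rank, so after $n$ steps one obtains a commutative $\mathbb{L}$-algebra $\mathbb{M}$ which is free of rank $n!$ as an $\mathbb{L}$-module (in particular, $\mathbb{L}\hookrightarrow\mathbb{M}$ is injective) and in which $\chi_{A}\left(t\right)=\prod_{i=1}^{n}\left(t-\lambda_{i}\right)$. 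Passing the relation $f^{n}=\chi_{A}\cdot h$ into $\mathbb{M}\left[t\right]$ and substituting $t=\lambda_{i}$ gives $f\left(\lambda_{i}\right)^{n}=0$; since $f^{n}$ is a monic polynomial in $\mathbb{K}\left[t\right]$ of degree $dn$, each $\lambda_{i}$ is integral over $\mathbb{K}$.

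To finish, the coefficients of $\chi_{A}$ are (up to sign) the elementary symmetric polynomials in $\lambda_{1},\ldots,\lambda_{n}$, and hence lie in the $\mathbb{K}$-subalgebra $\mathbb{K}\left[\lambda_{1},\ldots,\lambda_{n}\right]$ of $\mathbb{M}$. By Theorem \ref{thm.finpowmat.fin-alg}, this subalgebra is a finitely generated $\mathbb{K}$-module; applying Corollary \ref{cor.finpowmat.det-crit} with $C=\mathbb{M}$ and $U=\mathbb{K}\left[\lambda_{1},\ldots,\lambda_{n}\right]$ (which is faithful because $1\in U$) shows that each coefficient of $\chi_{A}$ is integral over $\mathbb{K}$ as an element of $\mathbb{M}$. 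The injectivity of $\mathbb{L}\hookrightarrow\mathbb{M}$ transports the witnessing monic relation back to $\mathbb{L}$. The main obstacle lies in setting up the splitting algebra cleanly: one must verify that adjoining a root of a monic polynomial preserves the base ring (i.e., that $R\left[t\right]/\left(g\right)$ is a free $R$-module of rank $\deg g$ for monic $g\in R\left[t\right]$), which is a standard consequence of Euclidean division by monic polynomials, valid over any commutative ring.
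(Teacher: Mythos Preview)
Your proof is correct and takes a genuinely different route from the paper's.

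The paper proves the theorem via Almkvist's exterior-power trick: it works inside $\Lambda^{n}\mathbb{L}^{n}$, builds the finitely generated $\mathbb{K}$-submodule $U$ spanned by all $B_{1}v_{1}\wedge\cdots\wedge B_{n}v_{n}$ with $B_{j}\in\mathbb{K}[A]$ and $v_{j}\in\mathbb{K}^{n}$, and then uses the identity of Proposition~\ref{prop.finpowmat.almk-coeff} to show directly that each coefficient $a_{k}$ of $\chi_{A}$ satisfies $a_{k}U\subseteq U$. Corollary~\ref{cor.finpowmat.det-crit} then finishes the job. No auxiliary ring extensions of $\mathbb{L}$ are needed; everything happens in one finitely generated $\mathbb{K}$-module sitting inside the top exterior power.

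Your argument instead follows the classical ``split into linear factors'' strategy (this is essentially Bourbaki's proof in \cite[Chapter~V, \S1.6]{Bourba72}, which the paper cites but does not reproduce). The key extra ingredients you supply are the divisibility $\chi_{A}\mid f^{n}$ in $\mathbb{L}[t]$ (via the factorization $f(t)I_{n}=(tI_{n}-A)P(t)$ and determinants) and the universal splitting algebra $\mathbb{M}$, which is free of rank $n!$ over $\mathbb{L}$ and hence a faithful extension. Once the $\lambda_{i}$ are available and each satisfies the monic equation $f(\lambda_{i})^{n}=0$, the rest is the standard closure of integral elements under sums and products, for which you correctly invoke Theorem~\ref{thm.finpowmat.fin-alg} and Corollary~\ref{cor.finpowmat.det-crit}.

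What each approach buys: your argument is conceptually transparent---it literally exhibits the coefficients of $\chi_{A}$ as symmetric functions of integral ``eigenvalues''---and it yields the explicit bound that every coefficient satisfies a monic equation of degree at most $(dn)^{n}$ over $\mathbb{K}$ (from $f^{n}$ having degree $dn$ and the $n$-fold tensor). The paper's exterior-power argument avoids constructing any ring extension of $\mathbb{L}$ and stays entirely inside modules over $\mathbb{L}$ itself; it is more self-contained with respect to the specific toolkit the paper has assembled, and the same module $U$ works uniformly for all coefficients $a_{k}$ at once.
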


Proposition \ref{prop.finpowmat.char-int-conv} and
Theorem \ref{thm.finpowmat.char-int} are parts of
\cite[Chapter V, Section 1.6, Proposition 17, (a) $\Longleftrightarrow$ (c)]{Bourba72}.
For convenience and expository value, we shall nevertheless
reprove them here.

\subsection{Gert Almkvist's exterior-power trick}

Our following proof of Theorem \ref{thm.finpowmat.char-int} will rely on the
notion of exterior powers of an $\mathbb{L}$-module (where $\mathbb{L}$ is a
commutative ring). See \cite[Chapter III, \S 7]{Bourba74} or
\cite{Conrad-extmod} for the relevant background. Our method is inspired by
Gert Almkvist's exterior-power trick (\cite[proof of Theorem 1.7]{Almkvi73},
\cite{Zeilbe93}). We shall need the following proposition (which is
essentially the equality ($\ast^{\prime}$) in \cite{Zeilbe93}, or the equality
($\ast\ast$) in \cite[proof of Theorem 1.7]{Almkvi73}):

\begin{proposition}
\label{prop.finpowmat.almk-coeff}Let $\mathbb{K}$ be a commutative ring. Let
$n\in\mathbb{N}$. Let $A\in\mathbb{K}^{n\times n}$ be an $n\times n$-matrix.
Let $V$ be the free $\mathbb{K}$-module $\mathbb{K}^{n}$ (consisting of column
vectors of size $n$). Consider $A$ as an endomorphism of the free $\mathbb{K}%
$-module $V=\mathbb{K}^{n}$ (in the usual way: i.e., we let $A\left(
v\right)  =Av$ for each column vector $v\in\mathbb{K}^{n}$). Consider the
$n$-th exterior power $\Lambda^{n}V$ of the $\mathbb{K}$-module $V$.

Fix $k\in\mathbb{N}$. Let $a_{k}\in\mathbb{K}$ be the coefficient of $t^{k}$
in the characteristic polynomial $\chi_{A}\in\mathbb{K}\left[  t\right]  $.
Then, for each $w_{1},w_{2},\ldots,w_{n}\in V$, we have%
\[
a_{k}\cdot w_{1}\wedge w_{2}\wedge\cdots\wedge w_{n}=\left(  -1\right)
^{n-k}\sum_{\substack{i_{1},i_{2},\ldots,i_{n}\in\left\{  0,1\right\}
;\\i_{1}+i_{2}+\cdots+i_{n}=n-k}}A^{i_{1}}w_{1}\wedge A^{i_{2}}w_{2}%
\wedge\cdots\wedge A^{i_{n}}w_{n}.
\]

\end{proposition}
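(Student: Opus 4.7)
The plan is to read off the coefficients of the characteristic polynomial by expanding the top exterior power of $tI_n - A$ via multilinearity of the wedge product.

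First I would pass to the polynomial ring $\mathbb{K}[t]$ and consider the free $\mathbb{K}[t]$-module $V[t] := V \otimes_{\mathbb{K}} \mathbb{K}[t] \cong \mathbb{K}[t]^n$. The endomorphism $tI_n - A$ of $V[t]$ is $\mathbb{K}[t]$-linear and, by definition of the characteristic polynomial, has determinant $\chi_A(t) = \sum_{k=0}^{n} a_k t^k$. Since $\Lambda^n$ commutes with base change, we have $\Lambda^n_{\mathbb{K}[t]} V[t] \cong \left(\Lambda^n V\right) \otimes_{\mathbb{K}} \mathbb{K}[t]$, and the induced endomorphism of this rank-$1$ free $\mathbb{K}[t]$-module is multiplication by $\chi_A(t)$. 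Hence, viewing $w_1, \ldots, w_n$ as elements of $V[t]$, we obtain the fundamental identity
\[
\chi_A(t) \cdot \left(w_1 \wedge w_2 \wedge \cdots \wedge w_n\right) = (tI_n - A)w_1 \wedge (tI_n - A)w_2 \wedge \cdots \wedge (tI_n - A)w_n
\]
in $\left(\Lambda^n V\right) \otimes_{\mathbb{K}} \mathbb{K}[t]$.

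Next, I would expand the right-hand side by the multilinearity of the wedge product. Writing $(tI_n - A)w_j = t \cdot w_j + (-1) \cdot A w_j$ and distributing over all $n$ factors, each term is indexed by a choice $(i_1, i_2, \ldots, i_n) \in \{0,1\}^n$, where $i_j = 0$ selects the summand $t \cdot w_j$ and $i_j = 1$ selects $-A w_j$. This yields
\[
\bigwedge_{j=1}^{n} (tI_n - A)w_j = \sum_{(i_1, \ldots, i_n) \in \{0,1\}^n} (-1)^{i_1 + \cdots + i_n} \, t^{n - (i_1 + \cdots + i_n)} \, A^{i_1} w_1 \wedge \cdots \wedge A^{i_n} w_n.
\]
Comparing the coefficient of $t^k$ on both sides of the fundamental identity (using that $\left(\Lambda^n V\right) \otimes_{\mathbb{K}} \mathbb{K}[t]$ is the free $\mathbb{K}[t]$-module whose coefficients of $t^k$ live in $\Lambda^n V$), the summands with $i_1 + \cdots + i_n = n-k$ survive and contribute the sign $(-1)^{n-k}$, while on the left we get $a_k \cdot w_1 \wedge \cdots \wedge w_n$. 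This is exactly the claimed formula.

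I do not expect a substantive obstacle: the only points requiring care are that the identity $\Lambda^n(\varphi) = \det(\varphi)$ holds for any endomorphism of a free rank-$n$ module over any commutative ring (here $\mathbb{K}[t]$), and that the base change identity $\Lambda^n_{\mathbb{K}[t]}(V \otimes \mathbb{K}[t]) \cong \Lambda^n_{\mathbb{K}} V \otimes \mathbb{K}[t]$ lets us match coefficients of $t^k$ inside $\Lambda^n V$. Both are standard (see, e.g., \cite[Chapter III, \S 7]{Bourba74}), so the argument reduces to the bookkeeping of signs and exponents already carried out above.
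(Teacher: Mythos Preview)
Your proposal is correct and essentially identical to the paper's first proof: both pass to the polynomial ring $\mathbb{K}[t]$, identify $\Lambda^n_{\mathbb{K}[t]}(\mathbb{K}[t]^n)$ with $(\Lambda^n V)[t]$ via base change, use that $\Lambda^n(tI_n-A)$ acts as multiplication by $\chi_A$, expand $(tI_n-A)w_1\wedge\cdots\wedge(tI_n-A)w_n$ by multilinearity, and compare coefficients of $t^k$. The only difference is notational.
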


Before we prove this proposition, we need a well-known lemma that connects
exterior powers with determinants:

\begin{lemma}
\label{lem.finpowmat.ext-det}Let $\mathbb{L}$ be a commutative ring. Let
$n\in\mathbb{N}$. If $M$ is an $\mathbb{L}$-module, then $\Lambda_{\mathbb{L}%
}^{n}M$ shall denote the $n$-th exterior power of the $\mathbb{L}$-module $M$.

\textbf{(a)} If $M$ is an $\mathbb{L}$-module, then each endomorphism $u$ of
the $\mathbb{L}$-module $M$ induces an endomorphism $\Lambda_{\mathbb{L}}%
^{n}u$ of the $\mathbb{L}$-module $\Lambda_{\mathbb{L}}^{n}M$, defined by%
\begin{align*}
\left(  \Lambda_{\mathbb{L}}^{n}u\right)  \left(  w_{1}\wedge w_{2}%
\wedge\cdots\wedge w_{n}\right)   &  =uw_{1}\wedge uw_{2}\wedge\cdots\wedge
uw_{n}\\
&  \ \ \ \ \ \ \ \ \ \ \text{for all }w_{1},w_{2},\ldots,w_{n}\in M.
\end{align*}

\textbf{(b)} For each endomorphism $u$ of the free $\mathbb{L}$-module
$\mathbb{L}^{n}$ and each $p\in\Lambda_{\mathbb{L}}^{n}\left(  \mathbb{L}%
^{n}\right)  $, we have%
\begin{equation}
\left(  \Lambda_{\mathbb{L}}^{n}u\right)  p=\det u\cdot p.
\label{pf.lem.finpowmat.almk-coeff.ht1}%
\end{equation}
(Here, $\det u$ stands for the determinant of $u$, which is defined as the
determinant of the $n\times n$-matrix representing $u$.)
\end{lemma}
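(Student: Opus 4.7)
The plan is to prove both parts using only the universal property of the exterior power and a short computation on a basis.

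For part \textbf{(a)}, I would start from the map
\[
M^n \to \Lambda^n_{\mathbb{L}} M, \qquad (w_1, w_2, \ldots, w_n) \mapsto uw_1 \wedge uw_2 \wedge \cdots \wedge uw_n.
\]
Since $u$ is $\mathbb{L}$-linear, this composite map is $\mathbb{L}$-multilinear in $(w_1, \ldots, w_n)$; since the wedge product is alternating, so is this map (as soon as two $w_i$'s coincide, two $uw_i$'s coincide, so the wedge vanishes). Hence the universal property of $\Lambda^n_{\mathbb{L}} M$ as the receiving object of the universal $n$-linear alternating map $M^n \to \Lambda^n_{\mathbb{L}} M$ provides a unique $\mathbb{L}$-linear map $\Lambda^n_{\mathbb{L}} u : \Lambda^n_{\mathbb{L}} M \to \Lambda^n_{\mathbb{L}} M$ satisfying the required formula on decomposable wedges. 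This defines the endomorphism unambiguously.

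For part \textbf{(b)}, I would use that $\Lambda^n_{\mathbb{L}}(\mathbb{L}^n)$ is a free $\mathbb{L}$-module of rank one, generated by $e_1 \wedge e_2 \wedge \cdots \wedge e_n$, where $e_1, e_2, \ldots, e_n$ is the standard basis of $\mathbb{L}^n$. Both sides of the desired identity are $\mathbb{L}$-linear in $p$, so it suffices to verify the equality for $p = e_1 \wedge e_2 \wedge \cdots \wedge e_n$. Let $A = (a_{ij})_{1 \le i,j \le n}$ be the matrix representing $u$, so that $u e_j = \sum_{i=1}^n a_{ij} e_i$. Expanding
\[
(\Lambda^n_{\mathbb{L}} u)(e_1 \wedge \cdots \wedge e_n) = ue_1 \wedge ue_2 \wedge \cdots \wedge ue_n
\]
by multilinearity and collapsing terms with repeated indices using the alternating property leaves precisely
\[
\sum_{\sigma \in S_n} \operatorname{sgn}(\sigma) \, a_{\sigma(1),1} a_{\sigma(2),2} \cdots a_{\sigma(n),n} \cdot e_1 \wedge e_2 \wedge \cdots \wedge e_n,
\]
which is $\det A \cdot e_1 \wedge \cdots \wedge e_n = \det u \cdot e_1 \wedge \cdots \wedge e_n$. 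This yields \eqref{pf.lem.finpowmat.almk-coeff.ht1}.

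Neither part presents a real obstacle; the only subtlety is the invocation of the universal property in (a), which is perfectly standard (see \cite[Chapter III, \S 7]{Bourba74}), and the bookkeeping of signs in the determinantal expansion in (b). If desired, one could simply cite these standard facts rather than reprove them, since the computation in (b) is essentially the classical definition of the determinant via the wedge product.
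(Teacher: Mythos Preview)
Your proof is correct. The paper itself does not prove this lemma at all but simply cites \cite[Chapter III, \S 7.2]{Bourba74} and \cite[Theorem 5.1]{Conrad-extmod} for part \textbf{(a)}, and \cite[Theorem 6.1]{Conrad-extmod} for part \textbf{(b)} (noting that in Bourbaki the formula in \textbf{(b)} is taken as the \emph{definition} of $\det u$). Your sketch via the universal property and the explicit expansion on the basis vector $e_1\wedge\cdots\wedge e_n$ is exactly the standard argument those references contain, so there is no substantive difference in approach---you have simply spelled out what the paper leaves to citation, and your closing remark that one could cite instead matches the paper's choice.
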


\begin{proof}
[Proof of Lemma \ref{lem.finpowmat.ext-det}.]Lemma \ref{lem.finpowmat.ext-det}
\textbf{(a)} appears (e.g.) in \cite[Chapter III, \S 7.2, equation
(4)]{Bourba74} and in \cite[Theorem 5.1]{Conrad-extmod}. Lemma
\ref{lem.finpowmat.ext-det} \textbf{(b)} appears (e.g.) in \cite[Theorem
6.1]{Conrad-extmod} and (implicitly) in \cite{Bourba74} as well (indeed, the
formula (\ref{pf.lem.finpowmat.almk-coeff.ht1}) is how $\det u$ is defined in
\cite[Chapter III, \S 8, Section 1, Definition 1]{Bourba74} (applied to
$A=\mathbb{L}$ and $M=\mathbb{L}^{n}$)).
\end{proof}

We shall also need a lemma about $n$-th exterior powers of free modules of
rank $n$:

\begin{lemma}
\label{lem.finpowmat.ext-rankn}Let $\mathbb{L}$ be a commutative ring. Let
$n\in\mathbb{N}$. Let $M$ be an $\mathbb{L}$-module with a basis $\left(
b_{1},b_{2},\ldots,b_{n}\right)  $. Then, the $1$-tuple $\left(  b_{1}\wedge
b_{2}\wedge\cdots\wedge b_{n}\right)  $ is a basis of the $\mathbb{L}$-module
$\Lambda_{\mathbb{L}}^{n}M$. (Here, $\Lambda_{\mathbb{L}}^{n}M$ denotes the
$n$-th exterior power of the $\mathbb{L}$-module $M$.)
\end{lemma}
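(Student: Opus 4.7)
The plan is to reduce to the free case $M = \mathbb{L}^n$ via the given basis, then establish the two required properties (spanning and linear independence) separately. This is a standard fact that can be cited from \cite[Chapter III, \S 7]{Bourba74} or \cite{Conrad-extmod}, but a self-contained argument is short, so I would write one out for completeness.

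For spanning: I would use the fact that $\Lambda_{\mathbb{L}}^n M$ is generated as an $\mathbb{L}$-module by the pure wedges $w_1 \wedge w_2 \wedge \cdots \wedge w_n$ with $w_j \in M$. Expanding each $w_j = \sum_i a_{ij} b_i$ in the basis and invoking multilinearity, such a wedge becomes an $\mathbb{L}$-linear combination of terms $b_{i_1} \wedge b_{i_2} \wedge \cdots \wedge b_{i_n}$ indexed by tuples $(i_1, \ldots, i_n) \in \{1, \ldots, n\}^n$. Terms with a repeated index vanish by alternation, and the surviving terms correspond to permutations of $(1, 2, \ldots, n)$, each reducing by antisymmetry to $\pm\, b_1 \wedge b_2 \wedge \cdots \wedge b_n$. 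Thus the single element $b_1 \wedge b_2 \wedge \cdots \wedge b_n$ spans $\Lambda_{\mathbb{L}}^n M$.

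For linear independence, the key step is to construct an $\mathbb{L}$-linear map $\varphi : \Lambda_{\mathbb{L}}^n M \to \mathbb{L}$ satisfying $\varphi(b_1 \wedge b_2 \wedge \cdots \wedge b_n) = 1$. I would define it via the determinant: given $w_1, \ldots, w_n \in M$ with unique coordinates $w_j = \sum_i a_{ij} b_i$ (unique because $(b_1, \ldots, b_n)$ is a basis), send $(w_1, \ldots, w_n) \mapsto \det\bigl((a_{ij})_{1 \le i,j \le n}\bigr)$. Since the determinant is multilinear and alternating in its columns, this rule is multilinear and alternating in $(w_1, \ldots, w_n)$, so by the universal property of $\Lambda_{\mathbb{L}}^n M$ it factors through an $\mathbb{L}$-linear map $\varphi$ as claimed, and $\varphi(b_1 \wedge \cdots \wedge b_n) = \det I_n = 1$. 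Hence any $\ell \in \mathbb{L}$ with $\ell \cdot b_1 \wedge \cdots \wedge b_n = 0$ satisfies $\ell = \varphi(\ell \cdot b_1 \wedge \cdots \wedge b_n) = 0$. Combined with spanning, this shows $(b_1 \wedge b_2 \wedge \cdots \wedge b_n)$ is a basis.

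The only mild subtlety is to avoid any argument relying on $\mathbb{L}$ being a field or a domain (ruling out rank/dimension shortcuts), but the determinantal construction of $\varphi$ works verbatim over any commutative ring, so no real obstacle arises.
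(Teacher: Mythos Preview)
Your argument is correct and complete; the determinant-based construction of $\varphi$ via the universal property is exactly the right way to get linear independence over an arbitrary commutative ring. The paper, however, does not give a proof at all: it simply cites \cite[Theorem 4.2]{Conrad-extmod} and \cite[Chapter III, \S 7.9, Corollary 1]{Bourba74}. Your self-contained argument is essentially what one finds in those references, so there is no discrepancy in approach---you have just written out what the paper leaves to the citations.
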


\begin{proof}
[Proof of Lemma \ref{lem.finpowmat.ext-rankn}.]See \cite[Theorem
4.2]{Conrad-extmod} or \cite[Chapter III, \S 7.9, Corollary 1]{Bourba74}.
\end{proof}

We shall now give a proof of Proposition \ref{prop.finpowmat.almk-coeff}; a
second proof (somewhat more elementary, but more laborious) will be provided
in Section \ref{sec.finpowmat.almk-coeff-pf2}.

\begin{proof}
[First proof of Proposition \ref{prop.finpowmat.almk-coeff}.]Note that $V$ is
a free $\mathbb{K}$-module of rank $n$; thus, $\Lambda^{n}V$ is a free
$\mathbb{K}$-module of rank $\dbinom{n}{n}=1$. In other words, $\Lambda
^{n}V\cong\mathbb{K}$ as a $\mathbb{K}$-module.

Let $\mathbb{L}$ be the polynomial ring $\mathbb{K}\left[  t\right]  $. Then,
$\mathbb{L}$ is a commutative $\mathbb{K}$-algebra. We consider the
commutative ring $\mathbb{K}$ as a subring of the polynomial ring
$\mathbb{L}=\mathbb{K}\left[  t\right]  $ (which is also commutative). Thus,
the free $\mathbb{K}$-module $V=\mathbb{K}^{n}$ canonically embeds into the
free $\mathbb{L}$-module $\mathbb{L}^{n}$, and its $n$-th exterior
power\footnote{In the following, the symbol \textquotedblleft$\Lambda^{n}%
$\textquotedblright\ without a subscript will always mean an $n$-th exterior
power over the base ring $\mathbb{K}$.} $\Lambda^{n}V$ canonically embeds into
the corresponding $n$-th exterior power $\Lambda_{\mathbb{L}}^{n}\left(
\mathbb{L}^{n}\right)  $, where the subscript \textquotedblleft$_{\mathbb{L}}%
$\textquotedblright\ signals that this is an exterior power over the base ring
$\mathbb{L}$. (This is indeed an embedding, since both modules $\Lambda^{n}V$
and $\Lambda_{\mathbb{L}}^{n}\left(  \mathbb{L}^{n}\right)  $ have bases
consisting of $1$ element only (by Lemma \ref{lem.finpowmat.ext-rankn}), and
the canonical map $\Lambda^{n}V\rightarrow\Lambda_{\mathbb{L}}^{n}\left(
\mathbb{L}^{n}\right)  $ sends the basis of one to the basis of the other.)

But $\mathbb{L}\otimes_{\mathbb{K}}\underbrace{V}_{=\mathbb{K}^{n}}%
=\mathbb{L}\otimes_{\mathbb{K}}\mathbb{K}^{n}\cong\mathbb{L}^{n}$ as
$\mathbb{L}$-modules. Hence, the canonical $\mathbb{L}$-module homomorphism%
\begin{align*}
\mathbb{L}\otimes_{\mathbb{K}}\left(  \Lambda^{n}V\right)   &  \rightarrow
\Lambda_{\mathbb{L}}^{n}\left(  \mathbb{L}^{n}\right)  ,\\
\ell\otimes\left(  w_{1}\wedge w_{2}\wedge\cdots\wedge w_{n}\right)   &
\mapsto\ell\cdot\left(  w_{1}\wedge w_{2}\wedge\cdots\wedge w_{n}\right)
\end{align*}
is an $\mathbb{L}$-module isomorphism (see, e.g., \cite[Theorem 1]{Conrad13};
see also \cite[Chapter III, \S 7.5, Proposition 8]{Bourba74} for the inverse
of this isomorphism). We use this isomorphism to identify the $\mathbb{L}%
$-module $\Lambda_{\mathbb{L}}^{n}\left(  \mathbb{L}^{n}\right)  $ with
$\mathbb{L}\otimes_{\mathbb{K}}\left(  \Lambda^{n}V\right)  $. Thus,%
\begin{equation}
\Lambda_{\mathbb{L}}^{n}\left(  \mathbb{L}^{n}\right)  =\underbrace{\mathbb{L}%
}_{=\mathbb{K}\left[  t\right]  }\otimes_{\mathbb{K}}\left(  \Lambda
^{n}V\right)  =\mathbb{K}\left[  t\right]  \otimes_{\mathbb{K}}\left(
\Lambda^{n}V\right)  \cong\left(  \Lambda^{n}V\right)  \left[  t\right]  .
\label{pf.lem.finpowmat.almk-coeff.identifies}%
\end{equation}
Concretely, this means that every element of $\Lambda_{\mathbb{L}}^{n}\left(
\mathbb{L}^{n}\right)  $ can be written as a polynomial in $t$ with
coefficients in $\Lambda^{n}V$.

Note that our canonical embedding $\Lambda^{n}V\hookrightarrow\Lambda
_{\mathbb{L}}^{n}\left(  \mathbb{L}^{n}\right)  $ sends each $p\in\Lambda
^{n}V$ to $1\otimes p\in\mathbb{L}\otimes_{\mathbb{K}}\left(  \Lambda
^{n}V\right)  =\Lambda_{\mathbb{L}}^{n}\left(  \mathbb{L}^{n}\right)  $.

Consider the matrix $tI_{n}-A\in\left(  \mathbb{K}\left[  t\right]  \right)
^{n\times n}=\mathbb{L}^{n\times n}$ (since $\mathbb{K}\left[  t\right]
=\mathbb{L}$). It satisfies%
\[
tI_{n}-A=\sum_{i\in\left\{  0,1\right\}  }t^{1-i}\left(  -1\right)  ^{i}A^{i}%
\]
(since $\sum_{i\in\left\{  0,1\right\}  }t^{1-i}\left(  -1\right)  ^{i}%
A^{i}=\underbrace{t^{1-0}}_{=t}\underbrace{\left(  -1\right)  ^{0}}%
_{=1}\underbrace{A^{0}}_{=I_{n}}+\underbrace{t^{1-1}}_{=t^{0}=1}%
\underbrace{\left(  -1\right)  ^{1}}_{=-1}\underbrace{A^{1}}_{=A}%
=tI_{n}+\left(  -1\right)  A=tI_{n}-A$).

The $n\times n$-matrix $tI_{n}-A\in\mathbb{L}^{n\times n}$ can be viewed as an
endomorphism of the free $\mathbb{L}$-module $\mathbb{L}^{n}$ (since any
$n\times n$-matrix over $\mathbb{L}$ can be viewed as such an endomorphism).
Applying (\ref{pf.lem.finpowmat.almk-coeff.ht1}) to $u=tI_{n}-A$ (or, more
precisely, to the $\mathbb{L}$-module endomorphism we just mentioned), we
obtain%
\begin{equation}
\left(  \Lambda_{\mathbb{L}}^{n}\left(  tI_{n}-A\right)  \right)
p=\det\left(  tI_{n}-A\right)  \cdot p \label{pf.lem.finpowmat.almk-coeff.ht2}%
\end{equation}
for each $p\in\Lambda_{\mathbb{L}}^{n}\left(  \mathbb{L}^{n}\right)  $. (Here,
of course, the meaning of $\Lambda_{\mathbb{L}}^{n}\left(  tI_{n}-A\right)  $
is as in Lemma \ref{lem.finpowmat.ext-det}.)

Now, fix $w_{1},w_{2},\ldots,w_{n}\in V$, and set $p=w_{1}\wedge w_{2}%
\wedge\cdots\wedge w_{n}\in\Lambda^{n}V$. Note that $p\in\Lambda^{n}%
V\subseteq\Lambda_{\mathbb{L}}^{n}\left(  \mathbb{L}^{n}\right)  $.

Definition \ref{def.finpowmat.char} yields $\chi_{A}=\det\left(
tI_{n}-A\right)  $. Hence,%
\begin{align*}
&  \chi_{A}\cdot p\\
&  =\det\left(  tI_{n}-A\right)  \cdot p\\
&  =\left(  \Lambda_{\mathbb{L}}^{n}\left(  tI_{n}-A\right)  \right)
p\ \ \ \ \ \ \ \ \ \ \left(  \text{by (\ref{pf.lem.finpowmat.almk-coeff.ht2}%
)}\right) \\
&  =\left(  \Lambda_{\mathbb{L}}^{n}\left(  tI_{n}-A\right)  \right)  \left(
w_{1}\wedge w_{2}\wedge\cdots\wedge w_{n}\right)  \ \ \ \ \ \ \ \ \ \ \left(
\text{since }p=w_{1}\wedge w_{2}\wedge\cdots\wedge w_{n}\right) \\
&  =\left(  tI_{n}-A\right)  w_{1}\wedge\left(  tI_{n}-A\right)  w_{2}%
\wedge\cdots\wedge\left(  tI_{n}-A\right)  w_{n}\\
&  \ \ \ \ \ \ \ \ \ \ \left(  \text{by the definition of }\Lambda
_{\mathbb{L}}^{n}\left(  tI_{n}-A\right)  \right) \\
&  =\left(  \sum_{i\in\left\{  0,1\right\}  }t^{1-i}\left(  -1\right)
^{i}A^{i}\right)  w_{1}\wedge\left(  \sum_{i\in\left\{  0,1\right\}  }%
t^{1-i}\left(  -1\right)  ^{i}A^{i}\right)  w_{2}\wedge\cdots\\
&  \ \ \ \ \ \ \ \ \ \ \wedge\left(  \sum_{i\in\left\{  0,1\right\}  }%
t^{1-i}\left(  -1\right)  ^{i}A^{i}\right)  w_{n}\ \ \ \ \ \ \ \ \ \ \left(
\text{since }tI_{n}-A=\sum_{i\in\left\{  0,1\right\}  }t^{1-i}\left(
-1\right)  ^{i}A^{i}\right) \\
&  =\sum_{i_{1},i_{2},\ldots,i_{n}\in\left\{  0,1\right\}  }%
\underbrace{t^{1-i_{1}}\left(  -1\right)  ^{i_{1}}A^{i_{1}}w_{1}\wedge
t^{1-i_{2}}\left(  -1\right)  ^{i_{2}}A^{i_{2}}w_{2}\wedge\cdots\wedge
t^{1-i_{n}}\left(  -1\right)  ^{i_{n}}A^{i_{n}}w_{n}}_{\substack{=t^{\left(
1-i_{1}\right)  +\left(  1-i_{2}\right)  +\cdots+\left(  1-i_{n}\right)
}\left(  -1\right)  ^{i_{1}+i_{2}+\cdots+i_{n}}\cdot A^{i_{1}}w_{1}\wedge
A^{i_{2}}w_{2}\wedge\cdots\wedge A^{i_{n}}w_{n}\\\text{(by the multilinearity
of the exterior product)}}}\\
&  \ \ \ \ \ \ \ \ \ \ \left(  \text{by the multilinearity of the exterior
product}\right) \\
&  =\sum_{i_{1},i_{2},\ldots,i_{n}\in\left\{  0,1\right\}  }%
\underbrace{t^{\left(  1-i_{1}\right)  +\left(  1-i_{2}\right)  +\cdots
+\left(  1-i_{n}\right)  }}_{=t^{n-\left(  i_{1}+i_{2}+\cdots+i_{n}\right)  }%
}\left(  -1\right)  ^{i_{1}+i_{2}+\cdots+i_{n}}\cdot A^{i_{1}}w_{1}\wedge
A^{i_{2}}w_{2}\wedge\cdots\wedge A^{i_{n}}w_{n}\\
&  =\sum_{i_{1},i_{2},\ldots,i_{n}\in\left\{  0,1\right\}  }t^{n-\left(
i_{1}+i_{2}+\cdots+i_{n}\right)  }\left(  -1\right)  ^{i_{1}+i_{2}%
+\cdots+i_{n}}\cdot A^{i_{1}}w_{1}\wedge A^{i_{2}}w_{2}\wedge\cdots\wedge
A^{i_{n}}w_{n}.
\end{align*}
This is an equality in $\Lambda_{\mathbb{L}}^{n}\left(  \mathbb{L}^{n}\right)
$. In view of (\ref{pf.lem.finpowmat.almk-coeff.identifies}), this becomes an
equality in $\left(  \Lambda^{n}V\right)  \left[  t\right]  $. Hence, by
comparing the coefficients of $t^{k}$ on both sides of this equality, we
obtain%
\[
a_{k}\cdot p=\sum_{\substack{i_{1},i_{2},\ldots,i_{n}\in\left\{  0,1\right\}
;\\n-\left(  i_{1}+i_{2}+\cdots+i_{n}\right)  =k}}\left(  -1\right)
^{i_{1}+i_{2}+\cdots+i_{n}}\cdot A^{i_{1}}w_{1}\wedge A^{i_{2}}w_{2}%
\wedge\cdots\wedge A^{i_{n}}w_{n}%
\]
(since the coefficient of $t^{k}$ in $\chi_{A}$ is $a_{k}$, and thus the
coefficient of $t^{k}$ in $\chi_{A}\cdot p$ is $a_{k}\cdot p$). Since the
condition \textquotedblleft$n-\left(  i_{1}+i_{2}+\cdots+i_{n}\right)
=k$\textquotedblright\ under the summation sign is equivalent to
\textquotedblleft$i_{1}+i_{2}+\cdots+i_{n}=n-k$\textquotedblright, we can
rewrite this as follows:%
\begin{align*}
a_{k}\cdot p  &  =\sum_{\substack{i_{1},i_{2},\ldots,i_{n}\in\left\{
0,1\right\}  ;\\i_{1}+i_{2}+\cdots+i_{n}=n-k}}\underbrace{\left(  -1\right)
^{i_{1}+i_{2}+\cdots+i_{n}}}_{\substack{=\left(  -1\right)  ^{n-k}%
\\\text{(since }i_{1}+i_{2}+\cdots+i_{n}=n-k\text{)}}}\cdot A^{i_{1}}%
w_{1}\wedge A^{i_{2}}w_{2}\wedge\cdots\wedge A^{i_{n}}w_{n}\\
&  =\left(  -1\right)  ^{n-k}\sum_{\substack{i_{1},i_{2},\ldots,i_{n}%
\in\left\{  0,1\right\}  ;\\i_{1}+i_{2}+\cdots+i_{n}=n-k}}A^{i_{1}}w_{1}\wedge
A^{i_{2}}w_{2}\wedge\cdots\wedge A^{i_{n}}w_{n}.
\end{align*}
In view of $p=w_{1}\wedge w_{2}\wedge\cdots\wedge w_{n}$, this rewrites as%
\[
a_{k}\cdot w_{1}\wedge w_{2}\wedge\cdots\wedge w_{n}=\left(  -1\right)
^{n-k}\sum_{\substack{i_{1},i_{2},\ldots,i_{n}\in\left\{  0,1\right\}
;\\i_{1}+i_{2}+\cdots+i_{n}=n-k}}A^{i_{1}}w_{1}\wedge A^{i_{2}}w_{2}%
\wedge\cdots\wedge A^{i_{n}}w_{n}.
\]
This proves Proposition \ref{prop.finpowmat.almk-coeff}.
\end{proof}

\subsection{Characterizing integral matrices: the proof}

Now, everything needed for proving Theorem \ref{thm.finpowmat.char-int} is in
place, so we can start the proof:

\begin{proof}
[Proof of Theorem \ref{thm.finpowmat.char-int}.]Let $V$ be the free
$\mathbb{L}$-module $\mathbb{L}^{n}$. In the following, $\Lambda^{n}V$ shall
always mean the $n$-th exterior power of the $\mathbb{L}$-module $V$.

We have assumed that $A$ is integral over $\mathbb{K}$ (as an element of the
$\mathbb{K}$-algebra $\mathbb{L}^{n\times n}$). Hence, Proposition
\ref{prop.finpowmat.int-fin} (applied to $\mathbb{L}^{n\times n}$ and $A$
instead of $\mathbb{L}$ and $u$) shows that there exists a $g\in\mathbb{N}$
such that $\mathbb{K}\left[  A\right]  =\left\langle A^{0},A^{1}%
,\ldots,A^{g-1}\right\rangle _{\mathbb{K}}$. Consider this $g$.

Let $C$ be the $\mathbb{L}$-module $\Lambda^{n}V$. It is easy (using Lemma
\ref{lem.finpowmat.ext-rankn}) to show that $C\cong\mathbb{L}$ as an
$\mathbb{L}$-module, but we shall not use this.

Let $U$ be the $\mathbb{K}$-submodule of $C$ spanned by all elements of the
form%
\begin{equation}
B_{1}v_{1}\wedge B_{2}v_{2}\wedge\cdots\wedge B_{n}v_{n}%
\ \ \ \ \ \ \ \ \ \ \text{with }B_{j}\in\mathbb{K}\left[  A\right]  \text{ and
}v_{j}\in\mathbb{K}^{n}. \label{pf.thm.finpowmat.char-int.form1}%
\end{equation}
Here, we are regarding $\mathbb{K}^{n}$ as a $\mathbb{K}$-submodule of
$\mathbb{L}^{n}$, so that the vectors $v_{j}\in\mathbb{K}^{n}$ automatically
become vectors in $\mathbb{L}^{n}$ (and thus the matrices $B_{j}\in
\mathbb{K}\left[  A\right]  \subseteq\mathbb{L}^{n\times n}$ can be multiplied
onto them, yielding new vectors $B_{j}v_{j}\in\mathbb{L}^{n}$). Now, we claim
the following:

\begin{statement}
\textit{Claim 1:} The $\mathbb{K}$-module $U$ is finitely generated.
\end{statement}

[\textit{First proof of Claim 1:} Let $\left(  e_{1},e_{2},\ldots
,e_{n}\right)  $ be the standard basis of the $\mathbb{K}$-module
$\mathbb{K}^{n}$ (so that $e_{i}$ is the vector with a $1$ in its $i$-th entry
and $0$ everywhere else).

We have $\mathbb{K}\left[  A\right]  =\left\langle A^{0},A^{1},\ldots
,A^{g-1}\right\rangle _{\mathbb{K}}$. Thus, each $B\in\mathbb{K}\left[
A\right]  $ is a $\mathbb{K}$-linear combination of $A^{0},A^{1}%
,\ldots,A^{g-1}$. We can thus easily see that each element of the form
(\ref{pf.thm.finpowmat.char-int.form1}) is a $\mathbb{K}$-linear combination
of elements of the form%
\begin{equation}
A^{i_{1}}v_{1}\wedge A^{i_{2}}v_{2}\wedge\cdots\wedge A^{i_{n}}v_{n}%
\ \ \ \ \ \ \ \ \ \ \text{with }i_{j}\in\left\{  0,1,\ldots,g-1\right\}
\text{ and }v_{j}\in\mathbb{K}^{n}. \label{pf.thm.finpowmat.char-int.form2}%
\end{equation}
Thus, the $\mathbb{K}$-module $U$ is spanned by all elements of the form
(\ref{pf.thm.finpowmat.char-int.form2}) (since it is spanned by all elements
of the form (\ref{pf.thm.finpowmat.char-int.form1})).

But $\left(  e_{1},e_{2},\ldots,e_{n}\right)  $ is the standard basis of the
$\mathbb{K}$-module $\mathbb{K}^{n}$. Thus, the elements $e_{1},e_{2}%
,\ldots,e_{n}$ span the $\mathbb{K}$-module $\mathbb{K}^{n}$. Hence, each
$v\in\mathbb{K}^{n}$ is a $\mathbb{K}$-linear combination of $e_{1}%
,e_{2},\ldots,e_{n}$. Hence, each element of the form
(\ref{pf.thm.finpowmat.char-int.form2}) is a $\mathbb{K}$-linear combination
of elements of the form%
\begin{equation}
A^{i_{1}}v_{1}\wedge A^{i_{2}}v_{2}\wedge\cdots\wedge A^{i_{n}}v_{n}%
\ \ \ \ \ \ \ \ \ \ \text{with }i_{j}\in\left\{  0,1,\ldots,g-1\right\}
\text{ and }v_{j}\in\left\{  e_{1},e_{2},\ldots,e_{n}\right\}  .
\label{pf.thm.finpowmat.char-int.form3}%
\end{equation}
Thus, the $\mathbb{K}$-module $U$ is spanned by all elements of the form
(\ref{pf.thm.finpowmat.char-int.form3}) (since it is spanned by all elements
of the form (\ref{pf.thm.finpowmat.char-int.form2})). Hence, the $\mathbb{K}%
$-module $U$ is finitely generated (since there are only finitely many
elements of the form (\ref{pf.thm.finpowmat.char-int.form3})). This proves
Claim 1.]

[\textit{Second proof of Claim 1:} Here is a more formal way of stating the
same proof. The $\mathbb{K}$-module $\mathbb{K}\left[  A\right]  $ is finitely
generated (since $\mathbb{K}\left[  A\right]  =\left\langle A^{0},A^{1}%
,\ldots,A^{g-1}\right\rangle _{\mathbb{K}}$), and so is the $\mathbb{K}%
$-module $\mathbb{K}^{n}$. Hence, the $\mathbb{K}$-module\footnote{In this
proof, the \textquotedblleft$\otimes$\textquotedblright\ symbol always means a
tensor product over $\mathbb{K}$.}
\[
M:=\underbrace{\mathbb{K}\left[  A\right]  \otimes\mathbb{K}\left[  A\right]
\otimes\cdots\otimes\mathbb{K}\left[  A\right]  }_{n\text{ times}}%
\otimes\underbrace{\mathbb{K}^{n}\otimes\mathbb{K}^{n}\otimes\cdots
\otimes\mathbb{K}^{n}}_{n\text{ times}}%
\]
is finitely generated as well\footnote{This is because of the following
general fact: If $A_{1},A_{2},\ldots,A_{m}$ are $m$ finitely generated
$\mathbb{K}$-modules, then the $\mathbb{K}$-module $A_{1}\otimes A_{2}%
\otimes\cdots\otimes A_{m}$ is also finitely generated. (We already proved
this fact while proving Theorem \ref{thm.finpowmat.fin-alg}.)}.

Now, for any $B_{1},B_{2},\ldots,B_{n}\in\mathbb{K}\left[  A\right]  $ and any
$v_{1},v_{2},\ldots,v_{n}\in\mathbb{K}^{n}$, the element $B_{1}v_{1}\wedge
B_{2}v_{2}\wedge\cdots\wedge B_{n}v_{n}\in C$ belongs to $U$ (since it is an
element of the form (\ref{pf.thm.finpowmat.char-int.form1})). Thus, the
$\mathbb{K}$-linear map%
\begin{align*}
\pi:M  &  \rightarrow U,\\
B_{1}\otimes B_{2}\otimes\cdots\otimes B_{n}\otimes v_{1}\otimes v_{2}%
\otimes\cdots\otimes v_{n}  &  \mapsto B_{1}v_{1}\wedge B_{2}v_{2}\wedge
\cdots\wedge B_{n}v_{n}%
\end{align*}
is well-defined. This $\mathbb{K}$-linear map $\pi$ is surjective (since each
element of the form (\ref{pf.thm.finpowmat.char-int.form1}) belongs to its
image\footnote{Namely:
\[
B_{1}v_{1}\wedge B_{2}v_{2}\wedge\cdots\wedge B_{n}v_{n}=\pi\left(
B_{1}\otimes B_{2}\otimes\cdots\otimes B_{n}\otimes v_{1}\otimes v_{2}%
\otimes\cdots\otimes v_{n}\right)  .
\]
}). Thus, Lemma \ref{lem.finpowmat.finger-sur} (applied to $N=U$ and $f=\pi$)
shows that the $\mathbb{K}$-module $U$ is finitely generated (since the
$\mathbb{K}$-module $M$ is finitely generated). This proves Claim 1 again.]

Now, fix $k\in\mathbb{N}$. Let $a_{k}\in\mathbb{L}$ be the coefficient of
$t^{k}$ in the characteristic polynomial $\chi_{A}\in\mathbb{L}\left[
t\right]  $. We are going to show the following:

\begin{statement}
\textit{Claim 2:} We have $a_{k}U\subseteq U$.
\end{statement}

[\textit{Proof of Claim 2:} It suffices to show that $a_{k}u\in U$ for each
$u\in U$. So let us fix $u\in U$; thus we must prove that $a_{k}u\in U$.

We can WLOG assume that $u$ is an element of the form
(\ref{pf.thm.finpowmat.char-int.form1}) (since $U$ is spanned by elements of
this form). So assume this. Then,%
\[
u=B_{1}v_{1}\wedge B_{2}v_{2}\wedge\cdots\wedge B_{n}v_{n}%
\ \ \ \ \ \ \ \ \ \ \text{for some }B_{j}\in\mathbb{K}\left[  A\right]  \text{
and }v_{j}\in\mathbb{K}^{n}.
\]
Consider these $B_{j}$ and these $v_{j}$. Note that for each $n$-tuple
$\left(  i_{1},i_{2},\ldots,i_{n}\right)  \in\left\{  0,1\right\}  ^{n}$, we
have%
\begin{equation}
A^{i_{1}}B_{1}v_{1}\wedge A^{i_{2}}B_{2}v_{2}\wedge\cdots\wedge A^{i_{n}}%
B_{n}v_{n}\in U, \label{pf.thm.finpowmat.char-int.c2.pf.1}%
\end{equation}
since $A^{i_{1}}B_{1}v_{1}\wedge A^{i_{2}}B_{2}v_{2}\wedge\cdots\wedge
A^{i_{n}}B_{n}v_{n}$ is an element of the form
(\ref{pf.thm.finpowmat.char-int.form1}) (because each $j\in\left\{
1,2,\ldots,n\right\}  $ satisfies $\underbrace{A^{i_{j}}}_{\in\mathbb{K}%
\left[  A\right]  }\underbrace{B_{j}}_{\in\mathbb{K}\left[  A\right]  }%
\in\mathbb{K}\left[  A\right]  \cdot\mathbb{K}\left[  A\right]  \subseteq
\mathbb{K}\left[  A\right]  $).

Multiplying both sides of the equality $u=B_{1}v_{1}\wedge B_{2}v_{2}%
\wedge\cdots\wedge B_{n}v_{n}$ with $a_{k}\in\mathbb{L}$, we obtain%
\begin{align*}
a_{k}u  &  =a_{k}\cdot B_{1}v_{1}\wedge B_{2}v_{2}\wedge\cdots\wedge
B_{n}v_{n}\\
&  =\left(  -1\right)  ^{n-k}\sum_{\substack{i_{1},i_{2},\ldots,i_{n}%
\in\left\{  0,1\right\}  ;\\i_{1}+i_{2}+\cdots+i_{n}=n-k}}\underbrace{A^{i_{1}%
}B_{1}v_{1}\wedge A^{i_{2}}B_{2}v_{2}\wedge\cdots\wedge A^{i_{n}}B_{n}v_{n}%
}_{\substack{\in U\\\text{(by (\ref{pf.thm.finpowmat.char-int.c2.pf.1}))}}}\\
&  \ \ \ \ \ \ \ \ \ \ \left(  \text{by Proposition
\ref{prop.finpowmat.almk-coeff}, applied to }\mathbb{L}\text{ and }B_{j}%
v_{j}\text{ instead of }\mathbb{K}\text{ and }w_{j}\right) \\
&  \in\left(  -1\right)  ^{n-k}\sum_{\substack{i_{1},i_{2},\ldots,i_{n}%
\in\left\{  0,1\right\}  ;\\i_{1}+i_{2}+\cdots+i_{n}=n-k}}U\subseteq U
\end{align*}
(since $U$ is a $\mathbb{K}$-module). This completes the proof of Claim 2.]

\begin{statement}
\textit{Claim 3:} Every $v\in\mathbb{L}$ satisfying $vU=0$ satisfies $v=0$.
\end{statement}

[\textit{Proof of Claim 3:} Let $v\in\mathbb{L}$ satisfy $vU=0$. We must prove
that $v=0$.

Let $\left(  e_{1},e_{2},\ldots,e_{n}\right)  $ be the standard basis of the
$\mathbb{L}$-module $\mathbb{L}^{n}$ (so that $e_{i}$ is the vector with a $1$
in its $i$-th entry and $0$ everywhere else). Thus, $\left(  e_{1}%
,e_{2},\ldots,e_{n}\right)  $ is also the standard basis of the $\mathbb{K}%
$-module $\mathbb{K}^{n}$ (since we are embedding $\mathbb{K}^{n}$ into
$\mathbb{L}^{n}$ in the usual way). Thus, $e_{j}\in\mathbb{K}^{n}$ for each
$j\in\left\{  1,2,\ldots,n\right\}  $. Hence, the element $e_{1}\wedge
e_{2}\wedge\cdots\wedge e_{n}$ of $C$ has the form
(\ref{pf.thm.finpowmat.char-int.form1}) (namely, for $B_{j}=I_{n}$ and
$v_{j}=e_{j}$). Hence, this element belongs to $U$ (by the definition of $U$).
In other words, $e_{1}\wedge e_{2}\wedge\cdots\wedge e_{n}\in U$. Thus,
$v\underbrace{\left(  e_{1}\wedge e_{2}\wedge\cdots\wedge e_{n}\right)  }_{\in
U}\in vU=0$, so that $v\left(  e_{1}\wedge e_{2}\wedge\cdots\wedge
e_{n}\right)  =0$.

But recall that the $n$-tuple $\left(  e_{1},e_{2},\ldots,e_{n}\right)  $ is a
basis of the $\mathbb{L}$-module $\mathbb{L}^{n}$. Hence, Lemma
\ref{lem.finpowmat.ext-rankn} (applied to $M=\mathbb{L}^{n}$ and $b_{j}=e_{j}%
$) shows that the $1$-tuple $\left(  e_{1}\wedge e_{2}\wedge\cdots\wedge
e_{n}\right)  $ is a basis of the $\mathbb{L}$-module $\Lambda^{n}%
\underbrace{\left(  \mathbb{L}^{n}\right)  }_{=V}=\Lambda^{n}V$. Hence, this
$1$-tuple is $\mathbb{L}$-linearly independent. In other words, every
$w\in\mathbb{L}$ satisfying $w\left(  e_{1}\wedge e_{2}\wedge\cdots\wedge
e_{n}\right)  =0$ satisfies $w=0$. Applying this to $w=v$, we obtain $v=0$
(since $v\left(  e_{1}\wedge e_{2}\wedge\cdots\wedge e_{n}\right)  =0$). This
proves Claim 3.]

Now, Corollary \ref{cor.finpowmat.det-crit} can be applied to $u=a_{k}$ (since
Claim 1, Claim 2 and Claim 3 ensure that the assumptions of Corollary
\ref{cor.finpowmat.det-crit} are satisfied). Thus, we conclude that $a_{k}%
\in\mathbb{L}$ is integral over $\mathbb{K}$. In other words, the coefficient
of $t^{k}$ in the characteristic polynomial $\chi_{A}\in\mathbb{L}\left[
t\right]  $ is integral over $\mathbb{K}$ (since $a_{k}$ was defined to be
this coefficient).

Now, forget that we fixed $k$. We thus have shown that for each $k\in
\mathbb{N}$, the coefficient of $t^{k}$ in the characteristic polynomial
$\chi_{A}\in\mathbb{L}\left[  t\right]  $ is integral over $\mathbb{K}$. This
proves Theorem \ref{thm.finpowmat.char-int}.
\end{proof}

\begin{corollary}
\label{cor.finpowmat.char-finger}Let $\mathbb{K}$ be a commutative ring. Let
$n\in\mathbb{N}$. Let $\mathbb{L}$ be a commutative $\mathbb{K}$-algebra. Let
$A$ be an $n\times n$-matrix over $\mathbb{L}$. Assume that $A$ is integral
over $\mathbb{K}$ (as an element of the $\mathbb{K}$-algebra $\mathbb{L}%
^{n\times n}$). Let $\mathbb{M}$ be the $\mathbb{K}$-subalgebra of
$\mathbb{L}$ generated by the coefficients of the characteristic polynomial
$\chi_{A}\in\mathbb{L}\left[  t\right]  $. Then, $\mathbb{M}$ is a finitely
generated $\mathbb{K}$-module.
\end{corollary}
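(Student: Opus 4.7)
The plan is to combine Theorem~\ref{thm.finpowmat.char-int} with Theorem~\ref{thm.finpowmat.fin-alg} in a straightforward way. The characteristic polynomial $\chi_A \in \mathbb{L}[t]$ is monic of degree $n$, so it has only finitely many coefficients; write $\chi_A = t^n + c_{n-1}t^{n-1} + \cdots + c_1 t + c_0$ for some $c_0, c_1, \ldots, c_{n-1} \in \mathbb{L}$. By the definition of $\mathbb{M}$, these $c_0, c_1, \ldots, c_{n-1}$ generate $\mathbb{M}$ as a $\mathbb{K}$-algebra (the leading coefficient $1$ is automatically in $\mathbb{K} \cdot 1_{\mathbb{L}} \subseteq \mathbb{M}$, so it need not be listed as a generator).

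First I would apply Theorem~\ref{thm.finpowmat.char-int} to the matrix $A$: since $A$ is assumed to be integral over $\mathbb{K}$, the theorem tells us that each coefficient of $\chi_A \in \mathbb{L}[t]$ is integral over $\mathbb{K}$. In particular, each of $c_0, c_1, \ldots, c_{n-1}$ is integral over $\mathbb{K}$.

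Next I would view $\mathbb{M}$ itself as a commutative $\mathbb{K}$-algebra (it is a $\mathbb{K}$-subalgebra of $\mathbb{L}$, and $\mathbb{L}$ is commutative). The elements $c_0, c_1, \ldots, c_{n-1}$ lie in $\mathbb{M}$ and generate $\mathbb{M}$ as a $\mathbb{K}$-algebra. Moreover, each $c_i$ is integral over $\mathbb{K}$ as an element of $\mathbb{L}$; and integrality of $c_i$ over $\mathbb{K}$ depends only on the existence of a monic polynomial $f \in \mathbb{K}[t]$ with $f(c_i) = 0$, which is equally a statement inside $\mathbb{M}$, so each $c_i$ is also integral over $\mathbb{K}$ viewed as an element of $\mathbb{M}$.

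Finally I would invoke Theorem~\ref{thm.finpowmat.fin-alg} with $\mathbb{L}$ replaced by $\mathbb{M}$ and with the list $(u_1, u_2, \ldots, u_m) = (c_0, c_1, \ldots, c_{n-1})$. The hypotheses are exactly what we just verified: the listed elements are integral over $\mathbb{K}$ and generate $\mathbb{M}$ as a $\mathbb{K}$-algebra. The conclusion is that $\mathbb{M}$ is finitely generated as a $\mathbb{K}$-module, which is precisely what we need. There is no real obstacle here; the only mild subtlety is the bookkeeping observation that integrality over $\mathbb{K}$ is an intrinsic property of the element together with the subalgebra it sits in (so it transfers from $\mathbb{L}$ to $\mathbb{M}$), which is immediate from Definition~\ref{def.finpowmat.integrality}.
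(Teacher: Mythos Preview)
Your proof is correct and follows essentially the same approach as the paper: apply Theorem~\ref{thm.finpowmat.char-int} to obtain integrality of the coefficients of $\chi_A$, then apply Theorem~\ref{thm.finpowmat.fin-alg} with $\mathbb{M}$ in place of $\mathbb{L}$. Your version is slightly more explicit about the minor bookkeeping points (the leading coefficient $1$, and the fact that integrality over $\mathbb{K}$ passes from $\mathbb{L}$ to the subalgebra $\mathbb{M}$), but the argument is the same.
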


\begin{proof}
[Proof of Corollary \ref{cor.finpowmat.char-finger}.]Let $u_{1},u_{2}%
,\ldots,u_{m}$ be the coefficients of the polynomial $\chi_{A}$. These
coefficients $u_{1},u_{2},\ldots,u_{m}$ are integral over $\mathbb{K}$ (by
Theorem \ref{thm.finpowmat.char-int}), and generate $\mathbb{M}$ as a
$\mathbb{K}$-algebra (by the definition of $\mathbb{M}$); thus, in particular,
they are elements of $\mathbb{M}$. Hence, Theorem \ref{thm.finpowmat.fin-alg}
(applied to $\mathbb{M}$ instead of $\mathbb{L}$) yields that the $\mathbb{K}%
$-module $\mathbb{M}$ is finitely generated. This proves Corollary
\ref{cor.finpowmat.char-finger}.
\end{proof}

\subsection{Two finiteness lemmas}

We need two more lemmas about finite generation of certain modules:

\begin{lemma}
\label{lem.finpowmat.fin-gen-over-fin-ring}Let $\mathbb{K}$ be a finite
commutative ring. Let $M$ be a finitely generated $\mathbb{K}$-module. Then,
$M$ is finite (as a set).
\end{lemma}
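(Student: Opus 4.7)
The plan is to exhibit $M$ as the surjective image of a finite set. Since $M$ is finitely generated, I can fix a finite list $\left(m_1, m_2, \ldots, m_n\right)$ of elements of $M$ such that $M = \left\langle m_1, m_2, \ldots, m_n \right\rangle_{\mathbb{K}}$. Every element of $M$ is then a $\mathbb{K}$-linear combination $k_1 m_1 + k_2 m_2 + \cdots + k_n m_n$ for some $\left(k_1, k_2, \ldots, k_n\right) \in \mathbb{K}^n$.

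Next I would define the map
\[
\varphi : \mathbb{K}^n \to M, \qquad \left(k_1, k_2, \ldots, k_n\right) \mapsto k_1 m_1 + k_2 m_2 + \cdots + k_n m_n,
\]
and observe that $\varphi$ is surjective by the previous paragraph. Since $\mathbb{K}$ is finite, the Cartesian power $\mathbb{K}^n$ is finite (of cardinality $\left|\mathbb{K}\right|^n$), so the image $M = \varphi\left(\mathbb{K}^n\right)$ is finite as well, with $\left|M\right| \leq \left|\mathbb{K}\right|^n$. This finishes the proof.

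There is no real obstacle here: the lemma is essentially a one-line counting argument, recorded as a named lemma only so that it can be cited cleanly in the proof of Theorem~\ref{thm.finpowmat.main}. The only mild subtlety is that $\varphi$ need not be a bijection (there may be relations among the $m_i$), but surjectivity is all that is needed to conclude finiteness of $M$.
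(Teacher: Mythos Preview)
Your proof is correct and essentially identical to the paper's own argument: both fix a finite generating list, observe that every element of $M$ is a $\mathbb{K}$-linear combination of the generators, and conclude finiteness from the finiteness of $\mathbb{K}^n$. The only cosmetic difference is that you name the surjection $\varphi$ explicitly, whereas the paper phrases the same counting directly in words.
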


\begin{proof}
[Proof of Lemma \ref{lem.finpowmat.fin-gen-over-fin-ring}.]The $\mathbb{K}%
$-module $M$ is finitely generated. In other words, there exist finitely many
vectors $a_{1},a_{2},\ldots,a_{m}\in M$ that generate $M$ as a $\mathbb{K}%
$-module. Consider these $a_{1},a_{2},\ldots,a_{m}$. Thus, each element of $M$
is a $\mathbb{K}$-linear combination of $a_{1},a_{2},\ldots,a_{m}$ (since
$a_{1},a_{2},\ldots,a_{m}$ generate $M$ as a $\mathbb{K}$-module).

There exist only finitely many $\mathbb{K}$-linear combinations of
$a_{1},a_{2},\ldots,a_{m}$ (because a $\mathbb{K}$-linear combination
$\lambda_{1}a_{1}+\lambda_{2}a_{2}+\cdots+\lambda_{m}a_{m}$ of $a_{1}%
,a_{2},\ldots,a_{m}$ is uniquely determined by choosing its $m$ coefficients
$\lambda_{1},\lambda_{2},\ldots,\lambda_{m}\in\mathbb{K}$, but each of these
$m$ coefficients can be chosen in only finitely many ways\footnote{since
$\mathbb{K}$ is finite}). Hence, there are only finitely many elements of $M$
(since each element of $M$ is a $\mathbb{K}$-linear combination of
$a_{1},a_{2},\ldots,a_{m}$). In other words, $M$ is finite. This proves Lemma
\ref{lem.finpowmat.fin-gen-over-fin-ring}.
\end{proof}

\begin{lemma}
\label{lem.finpowmat.monic-fin}Let $\mathbb{K}$ be a commutative ring. Let
$f\in\mathbb{K}\left[  t\right]  $ be a monic polynomial. Then, the
$\mathbb{K}$-module $\mathbb{K}\left[  t\right]  /\left(  f\right)  $ is
finitely generated.
\end{lemma}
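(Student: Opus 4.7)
The plan is to show that the residue classes of $1, t, t^{2}, \ldots, t^{d-1}$ in $\mathbb{K}\left[t\right]/\left(f\right)$ generate this quotient as a $\mathbb{K}$-module, where $d = \deg f$. This will prove that $\mathbb{K}\left[t\right]/\left(f\right)$ is $d$-generated, and hence finitely generated.

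The key tool is polynomial division by the monic polynomial $f$. I claim that for every $g \in \mathbb{K}\left[t\right]$, there exist $q, r \in \mathbb{K}\left[t\right]$ with $\deg r < d$ such that $g = qf + r$. I would prove this by strong induction on $\deg g$: if $\deg g < d$, take $q = 0$ and $r = g$; otherwise, letting $c$ denote the leading coefficient of $g$ and $e = \deg g \geq d$, form $g' := g - c t^{e-d} f$. Since $f$ is monic (leading coefficient $1$), the leading terms of $g$ and $c t^{e-d} f$ cancel, so $\deg g' < \deg g$. By the inductive hypothesis, $g' = q' f + r$ with $\deg r < d$, and then $g = (q' + c t^{e-d}) f + r$ has the desired form.

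Now the conclusion is immediate: given any coset $g + (f) \in \mathbb{K}\left[t\right]/(f)$, write $g = q f + r$ with $\deg r < d$. Then $g + (f) = r + (f)$, and since $r = \sum_{i=0}^{d-1} r_i t^i$ for some coefficients $r_i \in \mathbb{K}$, we get
\[
g + (f) = \sum_{i=0}^{d-1} r_i \cdot \left(t^i + (f)\right).
\]
Hence $\mathbb{K}\left[t\right]/(f) = \left\langle 1 + (f),\ t + (f),\ \ldots,\ t^{d-1} + (f) \right\rangle_{\mathbb{K}}$, which proves the lemma.

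There is no real obstacle here; the only point worth noting is that the division algorithm requires $f$ to be monic in order to avoid inverting the leading coefficient, which is precisely why this argument works over an arbitrary commutative ring rather than only over a field.
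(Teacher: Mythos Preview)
Your proof is correct and matches the paper's first proof of this lemma, which likewise invokes Euclidean division by the monic polynomial $f$ to conclude that the residue classes $\overline{t^{0}},\overline{t^{1}},\ldots,\overline{t^{d-1}}$ generate (indeed, form a basis of) the quotient. The paper also offers a second proof, observing that $\overline{t}$ is integral over $\mathbb{K}$ and applying Theorem~\ref{thm.finpowmat.fin-alg}.
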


\begin{proof}
[First proof of Lemma \ref{lem.finpowmat.monic-fin}.]Much more can be said:
For each $u\in\mathbb{K}\left[  t\right]  $, we let $\overline{u}$ denote the
projection of $u$ onto $\mathbb{K}\left[  t\right]  /\left(  f\right)  $.
Then, the $\mathbb{K}$-module $\mathbb{K}\left[  t\right]  /\left(  f\right)
$ is free with basis $\left(  \overline{t^{0}},\overline{t^{1}},\ldots
,\overline{t^{n-1}}\right)  $, where $n=\deg f$. This is a well-known
fact\footnote{See, e.g., \cite[Chapter III, Proposition 4.6]{Aluffi09} for an
equivalent version of this fact (restated in terms of an isomorphism
$\mathbb{K}\left[  t\right]  /\left(  f\right)  \rightarrow\mathbb{K}^{\oplus
n}$).} and follows easily from \textquotedblleft Euclidean division of
polynomials\textquotedblright. Of course, this entails that the $\mathbb{K}%
$-module $\mathbb{K}\left[  t\right]  /\left(  f\right)  $ is finitely
generated. This proves Lemma \ref{lem.finpowmat.monic-fin}.
\end{proof}

\begin{proof}
[Second proof of Lemma \ref{lem.finpowmat.monic-fin}.]For each $u\in
\mathbb{K}\left[  t\right]  $, we let $\overline{u}$ denote the projection of
$u$ onto $\mathbb{K}\left[  t\right]  /\left(  f\right)  $. The element
$\overline{t}$ of $\mathbb{K}\left[  t\right]  /\left(  f\right)  $ satisfies
$f\left(  \overline{t}\right)  =\overline{f\left(  t\right)  }=0$ (since
$f\left(  t\right)  =f\in\left(  f\right)  $). Hence, $\overline{t}%
\in\mathbb{K}\left[  t\right]  /\left(  f\right)  $ is integral over
$\mathbb{K}$ (by the definition of \textquotedblleft
integral\textquotedblright), since the polynomial $f\in\mathbb{K}\left[
t\right]  $ is monic.

Moreover, the $\mathbb{K}$-algebra $\mathbb{K}\left[  t\right]  $ is generated
by $t$; thus, its quotient $\mathbb{K}$-algebra $\mathbb{K}\left[  t\right]
/\left(  f\right)  $ is generated by $\overline{t}$. Hence, Theorem
\ref{thm.finpowmat.fin-alg} (applied to $\mathbb{L}=\mathbb{K}\left[
t\right]  /\left(  f\right)  $, $m=1$ and $u_{1}=\overline{t}$) yields that
the $\mathbb{K}$-module $\mathbb{K}\left[  t\right]  /\left(  f\right)  $ is
finitely generated. This proves Lemma \ref{lem.finpowmat.monic-fin}.
\end{proof}

\subsection{Proof of Theorem \ref{thm.finpowmat.main}}

The following fact will bring us very close to Theorem
\ref{thm.finpowmat.main}:

\begin{proposition}
\label{prop.finpowmat.char-crit}Let $\mathbb{K}$ be a finite commutative ring.
Let $n\in\mathbb{N}$. Let $\mathbb{L}$ be a commutative $\mathbb{K}$-algebra.
Let $A$ be an $n\times n$-matrix over $\mathbb{L}$. Then, the following three
assertions are equivalent:

\begin{itemize}
\item \textit{Assertion }$\mathcal{U}$\textit{:} The set $\left\{  A^{0}%
,A^{1},A^{2},\ldots\right\}  $ is finite.

\item \textit{Assertion }$\mathcal{V}$\textit{:} The matrix $A$ is integral
over $\mathbb{K}$ (as an element of the $\mathbb{K}$-algebra $\mathbb{L}%
^{n\times n}$).

\item \textit{Assertion }$\mathcal{W}$\textit{:} There exists a positive
integer $m$ such that the polynomial $t^{2m}-t^{m}$ is a multiple of $\chi
_{A}$ in $\mathbb{L}\left[  t\right]  $.
\end{itemize}
\end{proposition}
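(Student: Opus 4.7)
The plan is to establish the cyclic chain $\mathcal{W} \Longrightarrow \mathcal{U} \Longrightarrow \mathcal{V} \Longrightarrow \mathcal{W}$, which gives the three-way equivalence. For $\mathcal{W} \Longrightarrow \mathcal{U}$, if $t^{2m} - t^{m} = \chi_{A} \cdot g$ in $\mathbb{L}[t]$ for some $g$ and some positive integer $m$, then the Cayley--Hamilton theorem (which gives $\chi_{A}(A) = 0$) yields $A^{2m} = A^{m}$. Proposition \ref{prop.finpowmat.sg-fin}, applied to the multiplicative semigroup of $\mathbb{L}^{n \times n}$ with $p = 2m$ and $q = m$, then shows that $\{A^{1}, A^{2}, A^{3}, \ldots\}$ is finite, whence so is $\{A^{0}, A^{1}, A^{2}, \ldots\}$. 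For $\mathcal{U} \Longrightarrow \mathcal{V}$, finiteness of $\{A^{0}, A^{1}, \ldots\}$ forces $A^{p} = A^{q}$ for some integers $0 \le q < p$, and then the monic polynomial $t^{p} - t^{q}$ (viewed as an element of $\mathbb{K}[t]$) annihilates $A$, so $A$ is integral over $\mathbb{K}$. Note that the finiteness of $\mathbb{K}$ plays no role in either of these two implications.

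The substantive step is $\mathcal{V} \Longrightarrow \mathcal{W}$. Starting from the integrality of $A$, I would invoke Theorem \ref{thm.finpowmat.char-int} to conclude that every coefficient of $\chi_{A}$ is integral over $\mathbb{K}$. Let $\mathbb{M}$ denote the $\mathbb{K}$-subalgebra of $\mathbb{L}$ generated by these coefficients. Corollary \ref{cor.finpowmat.char-finger} then says that $\mathbb{M}$ is a finitely generated $\mathbb{K}$-module, and now---using the hypothesis that $\mathbb{K}$ is finite---Lemma \ref{lem.finpowmat.fin-gen-over-fin-ring} implies that $\mathbb{M}$ is finite as a set. Since $\chi_{A}$ is a monic polynomial in $\mathbb{M}[t]$, Lemma \ref{lem.finpowmat.monic-fin} (with $\mathbb{M}$ in place of $\mathbb{K}$) tells me that $\mathbb{M}[t]/(\chi_{A})$ is a finitely generated $\mathbb{M}$-module, and hence itself a finite set. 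Applying Theorem \ref{thm.finpowmat.finmon} to the multiplicative semigroup of this finite ring and to the element $\overline{t}$ yields a positive integer $m$ with $\overline{t}^{2m} = \overline{t}^{m}$. Translating this back, $\chi_{A}$ divides $t^{2m} - t^{m}$ in $\mathbb{M}[t]$, and therefore also in $\mathbb{L}[t]$, which is precisely $\mathcal{W}$.

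The main obstacle is this last implication $\mathcal{V} \Longrightarrow \mathcal{W}$: it is the only step in which the finiteness of $\mathbb{K}$ is indispensable, and it requires orchestrating nearly all the machinery developed in the preceding subsections---the integrality of the coefficients of the characteristic polynomial, the finite generation of algebras generated by integral elements, the passage from \emph{finitely generated} to \emph{finite} over a finite base ring, and the idempotent-power fact in finite semigroups. The other two implications, by contrast, follow in a single line each from Cayley--Hamilton together with the pigeonhole principle.
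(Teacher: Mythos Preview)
Your proof is correct and follows essentially the same route as the paper's. The only cosmetic difference is in $\mathcal{U}\Longrightarrow\mathcal{V}$: the paper applies Theorem~\ref{thm.finpowmat.finmon} to the finite semigroup $\{A^{0},A^{1},A^{2},\ldots\}$ to obtain $A^{m}=A^{2m}$ and hence the monic annihilator $t^{2m}-t^{m}$, whereas you use a direct pigeonhole to get $A^{p}=A^{q}$ with $p>q\geq 0$ and the monic annihilator $t^{p}-t^{q}$; both are perfectly valid.
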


\begin{proof}
[Proof of Proposition \ref{prop.finpowmat.char-crit}.]We shall prove the
implications $\mathcal{U}\Longrightarrow\mathcal{V}$ and $\mathcal{V}%
\Longrightarrow\mathcal{W}$ and $\mathcal{W}\Longrightarrow\mathcal{U}$:

\textit{Proof of the implication }$\mathcal{U}\Longrightarrow\mathcal{V}%
$\textit{:} Assume that Assertion $\mathcal{U}$ holds. We must prove that
Assertion $\mathcal{V}$ holds.

The set $\left\{  A^{0},A^{1},A^{2},\ldots\right\}  $ is closed under
multiplication. Thus, this set (equipped with multiplication) is a semigroup.
Furthermore, this set is finite (since Assertion $\mathcal{U}$ holds), and
thus is a finite semigroup. Hence, Theorem \ref{thm.finpowmat.finmon} (applied
to $M=\left\{  A^{0},A^{1},A^{2},\ldots\right\}  $ and $a=A$) shows that there
exists a positive integer $m$ such that $A^{m}=A^{2m}$ (since $A=A^{1}%
\in\left\{  A^{0},A^{1},A^{2},\ldots\right\}  $). Consider this $m$. Let
$g\in\mathbb{K}\left[  t\right]  $ be the polynomial $t^{2m}-t^{m}$. Then, $g$
is monic (since $m>0$) and satisfies $g\left(  A\right)  =A^{2m}-A^{m}=0$
(since $A^{m}=A^{2m}$). Hence, there exists a monic polynomial $f\in
\mathbb{K}\left[  t\right]  $ such that $f\left(  A\right)  =0$ (namely,
$f=g$). In other words, $A$ is integral over $\mathbb{K}$. In other words,
Assertion $\mathcal{V}$ holds. This proves the implication $\mathcal{U}%
\Longrightarrow\mathcal{V}$.

\textit{Proof of the implication }$\mathcal{V}\Longrightarrow\mathcal{W}%
$\textit{:} Assume that Assertion $\mathcal{V}$ holds. We must prove that
Assertion $\mathcal{W}$ holds.

We have assumed that Assertion $\mathcal{V}$ holds. In other words, $A$ is
integral over $\mathbb{K}$. Let $\mathbb{M}$ be the $\mathbb{K}$-subalgebra of
$\mathbb{L}$ generated by the coefficients of the characteristic polynomial
$\chi_{A}\in\mathbb{L}\left[  t\right]  $. Then, the coefficients of $\chi
_{A}$ belong to this $\mathbb{K}$-subalgebra $\mathbb{M}$; thus, $\chi_{A}%
\in\mathbb{M}\left[  t\right]  $. Furthermore, Corollary
\ref{cor.finpowmat.char-finger} shows that $\mathbb{M}$ is a finitely
generated $\mathbb{K}$-module. Thus, Lemma
\ref{lem.finpowmat.fin-gen-over-fin-ring} (applied to $M=\mathbb{M}$) shows
that $\mathbb{M}$ is finite (as a set).

The polynomial $\chi_{A}\in\mathbb{M}\left[  t\right]  $ is monic. Thus, the
$\mathbb{M}$-module $\mathbb{M}\left[  t\right]  /\left(  \chi_{A}\right)  $
is finitely generated (by Lemma \ref{lem.finpowmat.monic-fin}, applied to
$\mathbb{M}$ and $\chi_{A}$ instead of $\mathbb{K}$ and $f$). Thus, Lemma
\ref{lem.finpowmat.fin-gen-over-fin-ring} (applied to $\mathbb{M}$ and
$\mathbb{M}\left[  t\right]  /\left(  \chi_{A}\right)  $ instead of
$\mathbb{K}$ and $M$) shows that $\mathbb{M}\left[  t\right]  /\left(
\chi_{A}\right)  $ is finite (as a set). This ring $\mathbb{M}\left[
t\right]  /\left(  \chi_{A}\right)  $ becomes a semigroup when equipped with
its multiplication. This semigroup $\mathbb{M}\left[  t\right]  /\left(
\chi_{A}\right)  $ is finite (since we have just shown that $\mathbb{M}\left[
t\right]  /\left(  \chi_{A}\right)  $ is finite).

For each $u\in\mathbb{M}\left[  t\right]  $, we let $\overline{u}$ denote the
projection of $u$ onto $\mathbb{M}\left[  t\right]  /\left(  \chi_{A}\right)
$. Then, Theorem \ref{thm.finpowmat.finmon} (applied to $M=\mathbb{M}\left[
t\right]  /\left(  \chi_{A}\right)  $ and $a=\overline{t}$) yields that there
exists a positive integer $m$ such that $\overline{t}^{m}=\overline{t}^{2m}$.
Consider this $m$. Then, $\overline{t^{m}}=\overline{t}^{m}=\overline{t}%
^{2m}=\overline{t^{2m}}$; in other words, we have the congruence $t^{m}\equiv
t^{2m}\operatorname{mod}\chi_{A}$ in the ring $\mathbb{M}\left[  t\right]  $.
In other words, the polynomial $t^{2m}-t^{m}$ is a multiple of $\chi_{A}$ in
$\mathbb{M}\left[  t\right]  $. Hence, the polynomial $t^{2m}-t^{m}$ is a
multiple of $\chi_{A}$ in $\mathbb{L}\left[  t\right]  $ (since $\mathbb{M}%
\left[  t\right]  $ is a subring of $\mathbb{L}\left[  t\right]  $). Thus,
Assertion $\mathcal{W}$ holds. This proves the implication $\mathcal{V}%
\Longrightarrow\mathcal{W}$.

\textit{Proof of the implication }$\mathcal{W}\Longrightarrow\mathcal{U}%
$\textit{:} Assume that Assertion $\mathcal{W}$ holds. We must prove that
Assertion $\mathcal{U}$ holds.

We have assumed that Assertion $\mathcal{W}$ holds. In other words, there
exists a positive integer $m$ such that the polynomial $t^{2m}-t^{m}$ is a
multiple of $\chi_{A}$ in $\mathbb{L}\left[  t\right]  $. Consider this $m$.
Note that $2m$ and $m$ are positive integers satisfying $2m>m$. Consider the
ring $\mathbb{L}^{n\times n}$ as a semigroup (equipped with its multiplication).

Now, there exists a polynomial $g\in\mathbb{L}\left[  t\right]  $ such that
$t^{2m}-t^{m}=\chi_{A}\cdot g$ (since the polynomial $t^{2m}-t^{m}$ is a
multiple of $\chi_{A}$ in $\mathbb{L}\left[  t\right]  $). Consider this $g$.
Evaluating both sides of the polynomial identity $t^{2m}-t^{m}=\chi_{A}\cdot
g$ at $A$, we obtain%
\[
A^{2m}-A^{m}=\underbrace{\chi_{A}\left(  A\right)  }_{\substack{=0\\\text{(by
the Cayley--Hamilton}\\\text{theorem)}}}\cdot g\left(  A\right)  =0.
\]
In other words, $A^{2m}=A^{m}$. Hence, Proposition \ref{prop.finpowmat.sg-fin}
(applied to $M=\mathbb{L}^{n\times n}$, $a=A$, $p=2m$ and $q=m$) yields
$\left\{  A^{1},A^{2},A^{3},\ldots\right\}  =\left\{  A^{1},A^{2}%
,\ldots,A^{2m-1}\right\}  $. Thus, the set $\left\{  A^{1},A^{2},A^{3}%
,\ldots\right\}  $ is finite (since the set $\left\{  A^{1},A^{2}%
,\ldots,A^{2m-1}\right\}  $ is clearly finite). Hence, the set $\left\{
A^{0},A^{1},A^{2},\ldots\right\}  $ is also finite (since this set is
$\left\{  A^{1},A^{2},A^{3},\ldots\right\}  \cup\left\{  A^{0}\right\}  $). In
other words, Assertion $\mathcal{U}$ holds. This proves the implication
$\mathcal{W}\Longrightarrow\mathcal{U}$.

We have now proven all three implications $\mathcal{U}\Longrightarrow
\mathcal{V}$ and $\mathcal{V}\Longrightarrow\mathcal{W}$ and $\mathcal{W}%
\Longrightarrow\mathcal{U}$. Hence, $\mathcal{U}\Longleftrightarrow
\mathcal{V}\Longleftrightarrow\mathcal{W}$. This proves Proposition
\ref{prop.finpowmat.char-crit}.
\end{proof}

We can now easily prove Theorem \ref{thm.finpowmat.main}:

\begin{proof}
[Proof of Theorem \ref{thm.finpowmat.main}.]Proposition
\ref{prop.finpowmat.char-crit} (or, more precisely, the equivalence of the
Assertions $\mathcal{U}$ and $\mathcal{W}$ in this proposition) shows that the
set $\left\{  A^{0},A^{1},A^{2},\ldots\right\}  $ is finite if and only if
there exists a positive integer $m$ such that the polynomial $t^{2m}-t^{m}$ is
a multiple of $\chi_{A}$ in $\mathbb{L}\left[  t\right]  $. In other words, we
have the logical equivalence%
\begin{align*}
&  \ \left(  \text{the set }\left\{  A^{0},A^{1},A^{2},\ldots\right\}  \text{
is finite}\right) \\
&  \Longleftrightarrow\ \left(  \text{there exists a positive integer }m\text{
such that the}\right. \\
&  \ \ \ \ \ \ \ \ \ \ \left.  \text{polynomial }t^{2m}-t^{m}\text{ is a
multiple of }\chi_{A}\text{ in }\mathbb{L}\left[  t\right]  \right)  .
\end{align*}
The same argument (applied to $B$ instead of $A$) yields the logical
equivalence%
\begin{align*}
&  \ \left(  \text{the set }\left\{  B^{0},B^{1},B^{2},\ldots\right\}  \text{
is finite}\right) \\
&  \Longleftrightarrow\ \left(  \text{there exists a positive integer }m\text{
such that the}\right. \\
&  \ \ \ \ \ \ \ \ \ \ \left.  \text{polynomial }t^{2m}-t^{m}\text{ is a
multiple of }\chi_{B}\text{ in }\mathbb{L}\left[  t\right]  \right)  .
\end{align*}
But the right hand sides of these two equivalences are equivalent (since
$\chi_{A}=\chi_{B}$). Hence, their left hand sides are equivalent as well. In
other words, we have the equivalence%
\[
\left(  \text{the set }\left\{  A^{0},A^{1},A^{2},\ldots\right\}  \text{ is
finite}\right)  \ \Longleftrightarrow\ \left(  \text{the set }\left\{
B^{0},B^{1},B^{2},\ldots\right\}  \text{ is finite}\right)  .
\]
This proves Theorem \ref{thm.finpowmat.main}.
\end{proof}

\subsection{Digression: Traces of nilpotent matrices}

While this is unrelated to Theorem \ref{thm.finpowmat.main}, let us illustrate
the usefulness of Theorem \ref{thm.finpowmat.char-int} on a different application:

\begin{corollary}
\label{cor.finpowmat.tr-int}Let $\mathbb{K}$ be a commutative ring. Let
$n\in\mathbb{N}$. Let $\mathbb{L}$ be a commutative $\mathbb{K}$-algebra. Let
$A$ be an $n\times n$-matrix over $\mathbb{L}$. Assume that $A$ is integral
over $\mathbb{K}$ (as an element of the $\mathbb{K}$-algebra $\mathbb{L}%
^{n\times n}$). Then, the trace $\operatorname*{Tr}A\in\mathbb{L}$ is integral
over $\mathbb{K}$.
\end{corollary}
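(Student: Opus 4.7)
The plan is to invoke Theorem \ref{thm.finpowmat.char-int} directly: it guarantees that every coefficient of the characteristic polynomial $\chi_A \in \mathbb{L}[t]$ is integral over $\mathbb{K}$. All I need beyond this is to identify $\operatorname{Tr} A$ (up to sign) with one such coefficient, and to observe that the set of elements of $\mathbb{L}$ integral over $\mathbb{K}$ is closed under negation.

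First I would recall the standard expansion of the characteristic polynomial. Expanding $\chi_A(t) = \det(tI_n - A)$ via the Leibniz formula, one checks that, for $n \geq 1$,
\[
\chi_A(t) = t^n - (\operatorname{Tr} A)\, t^{n-1} + \sum_{k=0}^{n-2} c_k\, t^k
\]
for some $c_0, c_1, \ldots, c_{n-2} \in \mathbb{L}$. Indeed, the only Leibniz summand contributing a $t^{n-1}$ term comes from the identity permutation, where $\prod_{i=1}^{n} (tI_n - A)_{i,i} = \prod_{i=1}^{n} (t - A_{i,i})$ yields $-\sum_{i=1}^{n} A_{i,i} = -\operatorname{Tr} A$ as the coefficient of $t^{n-1}$; every other permutation has fewer than $n-1$ fixed points and hence contributes only powers $t^k$ with $k \leq n-2$. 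The edge case $n = 0$ is trivial: $\operatorname{Tr} A = 0$ is a root of the monic polynomial $t \in \mathbb{K}[t]$, so it is integral over $\mathbb{K}$.

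Next, I would apply Theorem \ref{thm.finpowmat.char-int} to deduce that the coefficient of $t^{n-1}$ in $\chi_A$, namely $-\operatorname{Tr} A$, is integral over $\mathbb{K}$. Finally, I would use the elementary fact that if $u \in \mathbb{L}$ is integral over $\mathbb{K}$, then so is $-u$: any monic $f \in \mathbb{K}[t]$ of degree $d$ with $f(u) = 0$ gives rise to the monic polynomial $g(t) := (-1)^d f(-t) \in \mathbb{K}[t]$ of the same degree, which satisfies $g(-u) = (-1)^d f(u) = 0$. Applying this to $u = -\operatorname{Tr} A$ yields the integrality of $\operatorname{Tr} A$.

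No step presents any real obstacle; the argument is a direct corollary of Theorem \ref{thm.finpowmat.char-int} combined with two elementary observations. The only point worth verifying with any care is the identification of $-\operatorname{Tr} A$ as the coefficient of $t^{n-1}$ in $\chi_A$, which is a well-known computation in linear algebra.
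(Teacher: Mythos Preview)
Your proof is correct and follows essentially the same approach as the paper: identify $-\operatorname{Tr}A$ as the coefficient of $t^{n-1}$ in $\chi_A$, apply Theorem~\ref{thm.finpowmat.char-int} to conclude it is integral over $\mathbb{K}$, and then pass to $\operatorname{Tr}A$ using closure of integrality under negation. You simply supply more detail (the Leibniz expansion, the explicit polynomial $g(t)=(-1)^d f(-t)$, and the $n=0$ edge case) where the paper leaves these as known facts.
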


\begin{proof}
[Proof of Corollary \ref{cor.finpowmat.tr-int}.]The coefficient of $t^{n-1}$
in the characteristic polynomial $\chi_{A}\in\mathbb{L}\left[  t\right]  $ is
known to be $-\operatorname*{Tr}A$. But on the other hand, the coefficient of
$t^{n-1}$ in the characteristic polynomial $\chi_{A}\in\mathbb{L}\left[
t\right]  $ is integral over $\mathbb{K}$ (by Theorem
\ref{thm.finpowmat.char-int}). In other words, $-\operatorname*{Tr}A$ is
integral over $\mathbb{K}$ (since this coefficient is $-\operatorname*{Tr}A$).
Thus, $\operatorname*{Tr}A$ is integral over $\mathbb{K}$ (because it is easy
to see that if $u\in\mathbb{L}$ is an element such that $-u$ is integral over
$\mathbb{K}$, then $u$ is integral over $\mathbb{K}$). This proves Corollary
\ref{cor.finpowmat.tr-int}.
\end{proof}

\begin{corollary}
\label{cor.finpowmat.tr-nil}Let $\mathbb{K}$ be a commutative ring. Let
$n\in\mathbb{N}$. Let $A\in\mathbb{K}^{n\times n}$ be a nilpotent matrix.
Then, its trace $\operatorname*{Tr}A$ is nilpotent.
\end{corollary}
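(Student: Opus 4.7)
The plan is to apply Proposition \ref{prop.finpowmat.almk-coeff} directly with $k = n - 1$ (so that $a_{n-1} = -\operatorname*{Tr} A$) and iterate it to express $a_{n-1}^{M}$ as an operator on the top exterior power $\Lambda^n V$. Since $A$ is nilpotent, the pigeonhole principle will force every summand in the iterated formula to vanish once $M$ is sufficiently large, yielding $(\operatorname*{Tr} A)^M = 0$. Note that Corollary \ref{cor.finpowmat.tr-int} only gives integrality of $\operatorname*{Tr} A$ over $\mathbb{K}$, which is far weaker than nilpotency, so that corollary cannot be used as a black box here.

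In more detail, first fix $N \in \mathbb{N}$ with $A^N = 0$ (hence $A^i = 0$ for all $i \geq N$), and set $V = \mathbb{K}^n$. By Lemma \ref{lem.finpowmat.ext-rankn}, the $\mathbb{K}$-module $\Lambda^n V$ is free of rank $1$ with basis $(e_1 \wedge \cdots \wedge e_n)$. Proposition \ref{prop.finpowmat.almk-coeff} with $k = n-1$ gives
\[
a_{n-1} \cdot (w_1 \wedge \cdots \wedge w_n) = -\sum_{j=1}^n w_1 \wedge \cdots \wedge A w_j \wedge \cdots \wedge w_n
\]
for all $w_1, \ldots, w_n \in V$. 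An induction on $M$ (using the Pascal-style identity $\binom{M+1}{i_1, \ldots, i_n} = \sum_{k} \binom{M}{i_1, \ldots, i_k - 1, \ldots, i_n}$) then yields
\[
a_{n-1}^M \cdot (w_1 \wedge \cdots \wedge w_n) = (-1)^M \sum_{\substack{i_1, \ldots, i_n \in \mathbb{N};\\ i_1 + \cdots + i_n = M}} \binom{M}{i_1, \ldots, i_n} \, A^{i_1} w_1 \wedge \cdots \wedge A^{i_n} w_n.
\]

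Choosing $M \geq n(N-1)+1$ forces, by pigeonhole, some index $i_j \geq N$ in every tuple with $i_1 + \cdots + i_n = M$; hence $A^{i_j} = 0$ and every wedge summand vanishes. Specializing to $w_j = e_j$ and using the freeness of $\Lambda^n V$ on $(e_1 \wedge \cdots \wedge e_n)$, I deduce $a_{n-1}^M = 0$ in $\mathbb{K}$. Since $a_{n-1}$ equals minus the trace (as the coefficient of $t^{n-1}$ in $\chi_A$), this gives $(\operatorname*{Tr} A)^M = 0$, so $\operatorname*{Tr} A$ is nilpotent.

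The main obstacle is verifying the closed-form expression for $a_{n-1}^M$: one has to interpret $-a_{n-1}$ as the Leibniz-type derivation that $A$ induces on $\Lambda^n V$, and then track how iterating it produces the correct multinomial coefficients. This is routine combinatorial bookkeeping, and every other step is immediate from pigeonhole plus Lemma \ref{lem.finpowmat.ext-rankn}.
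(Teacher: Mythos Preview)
Your argument is correct, but it takes a genuinely different route from the paper's.

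The paper does \emph{not} bypass Corollary~\ref{cor.finpowmat.tr-int}; instead it leverages it through a change of base ring. The key observation is Lemma~\ref{lem.finpowmat.nil-int}: an element $a$ of a $\mathbb{K}$-algebra is nilpotent if and only if $at$ is integral over $\mathbb{K}$ inside the polynomial algebra in $t$. Since $A$ is nilpotent, $At$ is integral over $\mathbb{K}$ as an element of $(\mathbb{K}[t])^{n\times n}$; Corollary~\ref{cor.finpowmat.tr-int} (applied with $\mathbb{L}=\mathbb{K}[t]$) then shows that $\operatorname{Tr}(At)=(\operatorname{Tr}A)\,t$ is integral over $\mathbb{K}$, and the converse direction of Lemma~\ref{lem.finpowmat.nil-int} turns this back into nilpotency of $\operatorname{Tr}A$. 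So your remark that Corollary~\ref{cor.finpowmat.tr-int} ``cannot be used as a black box here'' is misleading: it can, once one passes to $\mathbb{K}[t]$.

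Your approach, by contrast, iterates Proposition~\ref{prop.finpowmat.almk-coeff} directly and is closer in spirit to the Almkvist--Zeilberger argument the paper alludes to. It has the advantage of being self-contained (no auxiliary polynomial ring) and of yielding the explicit bound $(\operatorname{Tr}A)^{n(N-1)+1}=0$ when $A^{N}=0$; this is exactly the sharper statement the paper cites from \cite{Zeilbe93} and \cite{Almkvi73} but does not prove. The paper's approach is slicker and generalizes immediately to all coefficients of $\chi_A$ (not just the trace), since Theorem~\ref{thm.finpowmat.char-int} handles them uniformly.
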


This is a generalization of the classical result that a nilpotent square
matrix over a field must have trace $0$.

There is actually a stronger version of Corollary \ref{cor.finpowmat.tr-nil},
which says that if $A^{m+1}=0$ for some $m\in\mathbb{N}$, then $\left(
\operatorname*{Tr}A\right)  ^{mn+1}=0$ (see \cite{Zeilbe93}, and \cite[Theorem
1.7 (i)]{Almkvi73} for an even more general result). We shall only prove
Corollary \ref{cor.finpowmat.tr-nil}. The proof relies on the following neat
fact, which reveals nilpotence to be an instance of integrality:

\begin{lemma}
\label{lem.finpowmat.nil-int}Let $\mathbb{K}$ be a commutative ring. Let
$\mathbb{L}$ be a $\mathbb{K}$-algebra. Let $a\in\mathbb{L}$. Consider the
polynomial ring $\mathbb{L}\left[  t\right]  $. Then, $a$ is nilpotent if and
only if $at\in\mathbb{L}\left[  t\right]  $ is integral over $\mathbb{K}$.
\end{lemma}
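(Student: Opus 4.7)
The plan is to prove both implications directly from Definition~\ref{def.finpowmat.integrality}, bypassing all the heavier machinery developed so far. The key observation is that in the polynomial ring $\mathbb{L}[t]$, the indeterminate $t$ commutes with every element of $\mathbb{L}$, so $(at)^k = a^k t^k$ for every $k \in \mathbb{N}$.

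For the forward implication, I would assume $a^n = 0$ for some positive integer $n$. Then $(at)^n = a^n t^n = 0$, so the monic polynomial $x^n \in \mathbb{K}[x]$ witnesses that $at \in \mathbb{L}[t]$ is integral over $\mathbb{K}$.

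For the converse, I would pick a monic polynomial $f(x) = x^n + c_{n-1} x^{n-1} + \cdots + c_1 x + c_0 \in \mathbb{K}[x]$ with $f(at) = 0$ in $\mathbb{L}[t]$. Expanding yields the polynomial identity $a^n t^n + c_{n-1} a^{n-1} t^{n-1} + \cdots + c_1 a t + c_0 = 0$. Comparing the coefficient of $t^n$ on the two sides forces $a^n = 0$, so $a$ is nilpotent.

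There is essentially no obstacle here. The one point worth checking carefully is that the coefficients $c_i \in \mathbb{K}$, acting as scalars on $\mathbb{L}[t]$, sit in degree $0$ and therefore each summand $c_i (at)^i = c_i a^i t^i$ contributes only to the coefficient of $t^i$; in particular, no $c_i$ with $i < n$ contributes to the coefficient of $t^n$, which is exactly $a^n$. This follows at once from the construction of $\mathbb{L}[t]$ as an $\mathbb{L}$-module of finite formal sums $\sum_k \ell_k t^k$, and uses nothing about commutativity of $\mathbb{L}$.
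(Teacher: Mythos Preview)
Your proof is correct and follows essentially the same approach as the paper: both directions are handled by writing out $f(at)$ as $\sum_i c_i a^i t^i$ and comparing the coefficient of $t^n$, with the forward direction using the monic polynomial $x^n$ as witness. The only cosmetic difference is that you use a separate variable $x$ for the annihilating polynomial, which slightly clarifies the notation.
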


\begin{proof}
[Proof of Lemma \ref{lem.finpowmat.nil-int}.]$\Longrightarrow:$ Assume that
$a$ is nilpotent. Thus, there exists some $m\in\mathbb{N}$ such that $a^{m}%
=0$. Consider this $m$. Now, let $f$ be the polynomial $t^{m}\in
\mathbb{K}\left[  t\right]  $. Then, this polynomial $f$ is monic and
satisfies $f\left(  at\right)  =\left(  at\right)  ^{m}=\underbrace{a^{m}%
}_{=0}t^{m}=0$. Hence, $at\in\mathbb{L}\left[  t\right]  $ is integral over
$\mathbb{K}$ (by the definition of \textquotedblleft
integral\textquotedblright). This proves the \textquotedblleft$\Longrightarrow
$\textquotedblright\ direction of Lemma \ref{lem.finpowmat.nil-int}.

$\Longleftarrow:$ Assume that $at\in\mathbb{L}\left[  t\right]  $ is integral
over $\mathbb{K}$. Thus, there exists a monic polynomial $f\in\mathbb{K}%
\left[  t\right]  $ such that $f\left(  at\right)  =0$. Consider this $f$.

Write the polynomial $f$ in the form $f=k_{0}t^{0}+k_{1}t^{1}+\cdots
+k_{n}t^{n}$, where $n=\deg f$ and $k_{0},k_{1},\ldots,k_{n}\in\mathbb{K}$.
Then, $k_{n}=1$ (since $f$ is monic). Furthermore, from $f=k_{0}t^{0}%
+k_{1}t^{1}+\cdots+k_{n}t^{n}$, we obtain
\[
f\left(  at\right)  =k_{0}\left(  at\right)  ^{0}+k_{1}\left(  at\right)
^{1}+\cdots+k_{n}\left(  at\right)  ^{n}=k_{0}a^{0}t^{0}+k_{1}a^{1}%
t^{1}+\cdots+k_{n}a^{n}t^{n}.
\]
Comparing this with $f\left(  at\right)  =0$, we obtain%
\[
k_{0}a^{0}t^{0}+k_{1}a^{1}t^{1}+\cdots+k_{n}a^{n}t^{n}=0.
\]
This is an equality between two polynomials in $\mathbb{L}\left[  t\right]  $.
Comparing the coefficients of $t^{n}$ on both sides of this equality, we
conclude that $k_{n}a^{n}=0$. Since $k_{n}=1$, this rewrites as $a^{n}=0$.
Hence, $a$ is nilpotent. This proves the \textquotedblleft$\Longleftarrow
$\textquotedblright\ direction of Lemma \ref{lem.finpowmat.nil-int}.
\end{proof}

\begin{proof}
[Proof of Corollary \ref{cor.finpowmat.tr-nil}.]Consider the polynomial ring
$\mathbb{K}^{n\times n}\left[  t\right]  $. This is a $\mathbb{K}$-algebra
which may be noncommutative, but $t$ belongs to its center. The element
$A\in\mathbb{K}^{n\times n}$ is nilpotent. Hence, the \textquotedblleft%
$\Longrightarrow$\textquotedblright\ direction of Lemma
\ref{lem.finpowmat.nil-int} (applied to $\mathbb{K}^{n\times n}$ and $A$
instead of $\mathbb{L}$ and $a$) yields that $At\in\mathbb{K}^{n\times
n}\left[  t\right]  $ is integral over $\mathbb{K}$. Now, identify the
$\mathbb{K}$-algebra $\mathbb{K}^{n\times n}\left[  t\right]  $ with $\left(
\mathbb{K}\left[  t\right]  \right)  ^{n\times n}$ in the usual way (i.e., in
the same way as one identifies polynomial matrices with polynomials over
matrix rings in linear algebra). Thus, $At\in\mathbb{K}^{n\times n}\left[
t\right]  =\left(  \mathbb{K}\left[  t\right]  \right)  ^{n\times n}$. The
trace of this matrix $At$ is $\operatorname*{Tr}\left(  At\right)  =\left(
\operatorname*{Tr}A\right)  \cdot t$ (since the trace is linear).

This matrix $At$ is integral over $\mathbb{K}$ (as we know). Hence, Corollary
\ref{cor.finpowmat.tr-int} (applied to $\mathbb{K}\left[  t\right]  $ and $At$
instead of $\mathbb{L}$ and $A$) yields that the trace $\operatorname*{Tr}%
\left(  At\right)  \in\mathbb{K}\left[  t\right]  $ is integral over
$\mathbb{K}$. In other words, $\left(  \operatorname*{Tr}A\right)  \cdot
t\in\mathbb{K}\left[  t\right]  $ is integral over $\mathbb{K}$ (since
$\operatorname*{Tr}\left(  At\right)  =\left(  \operatorname*{Tr}A\right)
\cdot t$). Hence, the \textquotedblleft$\Longleftarrow$\textquotedblright%
\ direction of Lemma \ref{lem.finpowmat.nil-int} (applied to $\mathbb{L}%
=\mathbb{K}$ and $a=\operatorname*{Tr}A$) yields that $\operatorname*{Tr}A$ is
nilpotent. This proves Corollary \ref{cor.finpowmat.tr-nil}.
\end{proof}

\subsection{\label{sec.finpowmat.almk-coeff-pf2}Appendix:
Second proof of Proposition \ref{prop.finpowmat.almk-coeff}}

Let us also sketch a second proof of Proposition
\ref{prop.finpowmat.almk-coeff}, which avoids exterior powers over
$\mathbb{K}\left[  t\right]  $ but instead uses determinantal identities.

In this section, we shall use the following notations: Fix a commutative ring
$\mathbb{K}$ and an $n\in\mathbb{N}$. We let $\left[  n\right]  $ denote the
set $\left\{  1,2,\ldots,n\right\}  $.

Furthermore, if $A\in\mathbb{K}^{n\times n}$ is an $n\times n$-matrix, and if
$U$ and $V$ are two subsets of $\left[  n\right]  $, then $\operatorname*{sub}%
\nolimits_{U}^{V}A$ shall denote the $\left\vert U\right\vert \times\left\vert
V\right\vert $-matrix obtained from $A$ by removing all the rows whose
indices\footnote{The \textit{index} of a row in a matrix means the number
saying which row it is. In other words, the index of the $i$-th row in a
matrix means the number $i$. Similar terminology is used for columns.} don't
belong to $U$ and all the columns whose indices don't belong to $V$. (Formally
speaking, this $\operatorname*{sub}\nolimits_{U}^{V}A$ is defined by
\[
\operatorname*{sub}\nolimits_{U}^{V}A=\left(  a_{u_{i},v_{j}}\right)  _{1\leq
i\leq p,\ 1\leq j\leq q},
\]
where we have written the matrix $A$ in the form $A=\left(  a_{i,j}\right)
_{1\leq i\leq n,\ 1\leq j\leq n}$ and where we have written the two subsets
$U$ and $V$ as $U=\left\{  u_{1}<u_{2}<\cdots<u_{p}\right\}  $ and $V=\left\{
v_{1}<v_{2}<\cdots<v_{q}\right\}  $.)

For example,%
\[
\operatorname*{sub}\nolimits_{\left\{  2,4\right\}  }^{\left\{  1,4\right\}
}\left(
\begin{array}
[c]{cccc}%
a & b & c & d\\
a^{\prime} & b^{\prime} & c^{\prime} & d^{\prime}\\
a^{\prime\prime} & b^{\prime\prime} & c^{\prime\prime} & d^{\prime\prime}\\
a^{\prime\prime\prime} & b^{\prime\prime\prime} & c^{\prime\prime\prime} &
d^{\prime\prime\prime}%
\end{array}
\right)  =\left(
\begin{array}
[c]{cc}%
a^{\prime} & d^{\prime}\\
a^{\prime\prime\prime} & d^{\prime\prime\prime}%
\end{array}
\right)  .
\]

We shall now prove (or cite proofs of) a sequence of basic properties of
submatrices and their determinants.

\begin{lemma}
\label{lem.finpowmat.minors-1}Let $A\in\mathbb{K}^{n\times n}$ be a matrix.
For each $x\in\mathbb{K}$, we have%
\[
\det\left(  A+xI_{n}\right)  =\sum_{P\subseteq\left[  n\right]  }\det\left(
\operatorname*{sub}\nolimits_{P}^{P}A\right)  x^{n-\left\vert P\right\vert }.
\]

\end{lemma}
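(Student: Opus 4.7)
The plan is to prove Lemma \ref{lem.finpowmat.minors-1} by expanding $\det(A + xI_n)$ via multilinearity of the determinant in its columns. Write $A = (a_{i,j})_{1 \le i,j \le n}$ and note that the $j$-th column of $A + xI_n$ is $A_{\cdot,j} + x e_j$, where $e_j$ denotes the $j$-th standard basis vector of $\mathbb{K}^n$. Applying column-multilinearity once per column, one obtains
\[
\det(A + xI_n) = \sum_{Q \subseteq [n]} \det\bigl(M_Q\bigr),
\]
where $M_Q$ is the $n \times n$-matrix whose $j$-th column equals $x e_j$ if $j \in Q$ and equals $A_{\cdot,j}$ if $j \notin Q$. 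From the columns indexed by $Q$, one can factor out an $x$ each, producing an overall factor $x^{|Q|}$, so $\det(M_Q) = x^{|Q|} \det(N_Q)$, where $N_Q$ has $e_j$ in column $j$ for $j \in Q$ and $A_{\cdot,j}$ in column $j$ for $j \notin Q$.

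Next, I would evaluate $\det(N_Q)$ by successive Laplace expansion along each of the columns indexed by $Q$. For any $j \in Q$, the $j$-th column of $N_Q$ is $e_j$, so Laplace expansion along that column yields exactly $\det$ of the $(n-1) \times (n-1)$-matrix obtained by deleting row $j$ and column $j$ (no sign issue arises because the only nonzero entry is in position $(j,j)$, contributing sign $(-1)^{j+j} = 1$). Iterating this across all $j \in Q$, one sees $\det(N_Q) = \det\bigl(\operatorname{sub}_{[n] \setminus Q}^{[n] \setminus Q} A\bigr)$. Setting $P = [n] \setminus Q$, so $|Q| = n - |P|$ and $Q$ ranges over all subsets of $[n]$ iff $P$ does, gives
\[
\det(A + xI_n) = \sum_{P \subseteq [n]} \det\bigl(\operatorname{sub}\nolimits_{P}^{P} A\bigr) \, x^{n - |P|},
\]
as desired.

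The only mildly delicate step is the iterated Laplace expansion that reduces $\det(N_Q)$ to the principal submatrix determinant, because one must verify that the successive sign factors $(-1)^{j+j}$ combine correctly (they all equal $+1$) and that the row/column indices of the remaining submatrix at each stage refer to the original indexing — not the updated one after prior deletions. A clean way to handle this without bookkeeping is to argue once and for all: expanding along each column $j \in Q$ of $e_j$ picks up the $(j,j)$-cofactor, and since the cofactors for different $j \in Q$ are independent (each row $j \in Q$ has only one nonzero entry, namely the $1$ in column $j$), one may simply delete all rows in $Q$ and all columns in $Q$ simultaneously, yielding $\operatorname{sub}_{[n] \setminus Q}^{[n] \setminus Q} A$ with total sign $\prod_{j \in Q}(-1)^{j+j} = 1$. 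Alternatively, one may permute rows and columns to bring the $Q$-indexed ones into the upper-left block (an even permutation when done symmetrically on both sides) and recognize $N_Q$ as block upper-triangular with identity block $I_{|Q|}$ in the corner, whose determinant is $\det\bigl(\operatorname{sub}_{[n] \setminus Q}^{[n] \setminus Q} A\bigr)$ directly.
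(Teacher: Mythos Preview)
Your proof is correct. The paper does not actually prove this lemma but instead cites it from an external reference (\cite[Corollary 6.164]{detnotes}), so there is no ``paper's own proof'' to compare against directly.

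Your argument via column multilinearity followed by the block-triangular reduction is the standard elementary route. Incidentally, the key step $\det(N_Q) = \det\bigl(\operatorname{sub}_{[n]\setminus Q}^{[n]\setminus Q} A\bigr)$ is precisely the content of Proposition~\ref{prop.finpowmat.minors-IA} later in the paper (with $P = [n]\setminus Q$), where it is proved by a different method --- namely via the block determinant formula of Lemma~\ref{lem.finpowmat.minors-3} rather than by your block-triangular permutation argument. Both arguments are equally valid; yours is arguably more transparent. One small comment: the phrasing ``with total sign $\prod_{j\in Q}(-1)^{j+j}=1$'' in the iterated-Laplace version glosses over the index shifts after each deletion (the signs are indeed all $+1$, but not literally because each exponent is $j+j$ in the \emph{original} indexing), so the block-triangular alternative you give at the end is the cleaner justification to rely on.
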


Lemma \ref{lem.finpowmat.minors-1} is precisely the first equality sign of
\cite[Corollary 6.164]{detnotes} (up to notation\footnote{Specifically, our
notations differ from those in \cite[Corollary 6.164]{detnotes} in two ways:
Firstly, we use the shorthand $\left[  n\right]  $ for $\left\{
1,2,\ldots,n\right\}  $; secondly, what we call $\operatorname*{sub}%
\nolimits_{U}^{V}A$ is called $\operatorname*{sub}\nolimits_{w\left(
U\right)  }^{w\left(  V\right)  }A$ in the notation of \cite[Definition 6.78
and Definition 6.153]{detnotes}.}). Thus, we don't need to prove it here.

\begin{corollary}
\label{cor.finpowmat.minors-2}Let $A\in\mathbb{K}^{n\times n}$ be a matrix.
Fix $k\in\mathbb{N}$. Let $a_{k}\in\mathbb{K}$ be the coefficient of $t^{k}$
in the characteristic polynomial $\chi_{A}\in\mathbb{K}\left[  t\right]  $.
Then,
\[
a_{k}=\left(  -1\right)  ^{n-k}\sum_{\substack{P\subseteq\left[  n\right]
;\\\left\vert P\right\vert =n-k}}\det\left(  \operatorname*{sub}%
\nolimits_{P}^{P}A\right)  .
\]

\end{corollary}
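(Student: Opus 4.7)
The plan is to derive Corollary \ref{cor.finpowmat.minors-2} as a direct consequence of Lemma \ref{lem.finpowmat.minors-1}, by substituting $x = -t$ and tracking signs. Since Lemma \ref{lem.finpowmat.minors-1} holds over any commutative ring and for any $x$ in that ring, I can apply it with $\mathbb{K}$ replaced by the polynomial ring $\mathbb{K}\left[t\right]$ (regarding $A$ as a matrix over $\mathbb{K}\left[t\right]$, which does not change the submatrices $\operatorname{sub}\nolimits_P^P A$) and with $x = -t \in \mathbb{K}\left[t\right]$. This yields the polynomial identity
\[
\det\left(A - tI_n\right) = \sum_{P\subseteq\left[n\right]} \det\left(\operatorname{sub}\nolimits_P^P A\right) \left(-t\right)^{n-\left\vert P\right\vert}.
\]

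Next I would reconcile this with $\chi_A$. By Definition \ref{def.finpowmat.char} we have $\chi_A = \det\left(tI_n - A\right) = \left(-1\right)^n \det\left(A - tI_n\right)$, so multiplying both sides of the displayed identity by $\left(-1\right)^n$ and collecting signs via $\left(-1\right)^n \cdot \left(-1\right)^{n-\left\vert P\right\vert} = \left(-1\right)^{\left\vert P\right\vert}$ (using $\left(-1\right)^{2n} = 1$) gives
\[
\chi_A = \sum_{P\subseteq\left[n\right]} \left(-1\right)^{\left\vert P\right\vert} \det\left(\operatorname{sub}\nolimits_P^P A\right) t^{n-\left\vert P\right\vert}.
\]

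Finally I would extract the coefficient of $t^k$. In the sum on the right, the exponent $n - \left\vert P\right\vert$ equals $k$ precisely when $\left\vert P\right\vert = n - k$; for any such $P$ the sign $\left(-1\right)^{\left\vert P\right\vert}$ equals the constant $\left(-1\right)^{n-k}$, which pulls out of the sum. Equating coefficients of $t^k$ therefore produces the claimed formula for $a_k$.

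There is no real obstacle in this argument; the only thing to be careful about is the bookkeeping of the two sign factors, one coming from the substitution $x = -t$ (which produces $\left(-1\right)^{n-\left\vert P\right\vert}$) and one from the identification $\det\left(tI_n - A\right) = \left(-1\right)^n \det\left(A - tI_n\right)$. Their product simplifies to $\left(-1\right)^{n-k}$ for the surviving terms, which is exactly the sign appearing in the statement.
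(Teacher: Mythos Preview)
Your proposal is correct and follows essentially the same route as the paper's proof: apply Lemma~\ref{lem.finpowmat.minors-1} over $\mathbb{K}[t]$ with $x=-t$, pass from $\det(A-tI_n)$ to $\chi_A=\det(tI_n-A)$ via the global sign $(-1)^n$, simplify $(-1)^n(-1)^{n-\left\vert P\right\vert}=(-1)^{\left\vert P\right\vert}$, and then read off the coefficient of $t^k$. The sign bookkeeping you flag is exactly the only subtlety, and you handle it just as the paper does.
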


\begin{proof}
[Proof of Corollary \ref{cor.finpowmat.minors-2}.]Let us consider
$A\in\mathbb{K}^{n\times n}$ as an $n\times n$-matrix over the polynomial ring
$\mathbb{K}\left[  t\right]  $. Then,
\begin{align*}
\chi_{A}  &  =\det\underbrace{\left(  tI_{n}-A\right)  }_{=-\left(  A+\left(
-t\right)  I_{n}\right)  }=\det\left(  -\left(  A+\left(  -t\right)
I_{n}\right)  \right)  =\left(  -1\right)  ^{n}\underbrace{\det\left(
A+\left(  -t\right)  I_{n}\right)  }_{\substack{=\sum_{P\subseteq\left[
n\right]  }\det\left(  \operatorname*{sub}\nolimits_{P}^{P}A\right)  \left(
-t\right)  ^{n-\left\vert P\right\vert }\\\text{(by Lemma
\ref{lem.finpowmat.minors-1},}\\\text{applied to }\mathbb{K}\left[  t\right]
\text{ and }-t\\\text{instead of }\mathbb{K}\text{ and }x\text{)}}}\\
&  =\left(  -1\right)  ^{n}\sum_{P\subseteq\left[  n\right]  }\det\left(
\operatorname*{sub}\nolimits_{P}^{P}A\right)  \underbrace{\left(  -t\right)
^{n-\left\vert P\right\vert }}_{=\left(  -1\right)  ^{n-\left\vert
P\right\vert }t^{n-\left\vert P\right\vert }}\\
&  =\left(  -1\right)  ^{n}\sum_{P\subseteq\left[  n\right]  }\det\left(
\operatorname*{sub}\nolimits_{P}^{P}A\right)  \left(  -1\right)
^{n-\left\vert P\right\vert }t^{n-\left\vert P\right\vert }\\
&  =\sum_{P\subseteq\left[  n\right]  }\det\left(  \operatorname*{sub}%
\nolimits_{P}^{P}A\right)  \underbrace{\left(  -1\right)  ^{n}\left(
-1\right)  ^{n-\left\vert P\right\vert }}_{=\left(  -1\right)  ^{\left\vert
P\right\vert }}t^{n-\left\vert P\right\vert }=\sum_{P\subseteq\left[
n\right]  }\det\left(  \operatorname*{sub}\nolimits_{P}^{P}A\right)  \left(
-1\right)  ^{\left\vert P\right\vert }t^{n-\left\vert P\right\vert }.
\end{align*}
Hence,%
\begin{align*}
&  \left(  \text{the coefficient of }t^{k}\text{ in }\chi_{A}\right) \\
&  =\left(  \text{the coefficient of }t^{k}\text{ in }\sum_{P\subseteq\left[
n\right]  }\det\left(  \operatorname*{sub}\nolimits_{P}^{P}A\right)  \left(
-1\right)  ^{\left\vert P\right\vert }t^{n-\left\vert P\right\vert }\right) \\
&  =\sum_{\substack{P\subseteq\left[  n\right]  ;\\n-\left\vert P\right\vert
=k}}\det\left(  \operatorname*{sub}\nolimits_{P}^{P}A\right)  \left(
-1\right)  ^{\left\vert P\right\vert }=\sum_{\substack{P\subseteq\left[
n\right]  ;\\\left\vert P\right\vert =n-k}}\det\left(  \operatorname*{sub}%
\nolimits_{P}^{P}A\right)  \underbrace{\left(  -1\right)  ^{\left\vert
P\right\vert }}_{\substack{=\left(  -1\right)  ^{n-k}\\\text{(since
}\left\vert P\right\vert =n-k\text{)}}}\\
&  \ \ \ \ \ \ \ \ \ \ \left(
\begin{array}
[c]{c}%
\text{since the condition \textquotedblleft}n-\left\vert P\right\vert
=k\text{\textquotedblright\ on a subset }P\text{ of }\left[  n\right] \\
\text{is equivalent to the condition \textquotedblleft}\left\vert P\right\vert
=n-k\text{\textquotedblright}%
\end{array}
\right) \\
&  =\left(  -1\right)  ^{n-k}\sum_{\substack{P\subseteq\left[  n\right]
;\\\left\vert P\right\vert =n-k}}\det\left(  \operatorname*{sub}%
\nolimits_{P}^{P}A\right)  .
\end{align*}
Now, by the definition of $a_{k}$, we have%
\[
a_{k}=\left(  \text{the coefficient of }t^{k}\text{ in }\chi_{A}\right)
=\left(  -1\right)  ^{n-k}\sum_{\substack{P\subseteq\left[  n\right]
;\\\left\vert P\right\vert =n-k}}\det\left(  \operatorname*{sub}%
\nolimits_{P}^{P}A\right)  .
\]
Corollary \ref{cor.finpowmat.minors-2} is thus proven.
\end{proof}

We introduce two more notations:

\begin{itemize}
\item If $S$ is any subset of $\left[  n\right]  $, then $\widetilde{S}$ shall
denote the complement $\left[  n\right]  \setminus S$ of $S$.

\item If $S$ is any subset of $\left[  n\right]  $, then $\sum S$ shall denote
the sum of the elements of $S$.
\end{itemize}

For example, if $n=5$, then $\widetilde{\left\{  1,3\right\}  }=\left\{
2,4,5\right\}  $ and $\sum\left\{  1,3\right\}  =1+3=4$.

\begin{lemma}
\label{lem.finpowmat.minors-3}Let $P$ and $Q$ be two subsets of $\left[
n\right]  $. Let $A=\left(  a_{i,j}\right)  _{1\leq i\leq n,\ 1\leq j\leq
n}\in\mathbb{K}^{n\times n}$ be an $n\times n$-matrix such that%
\[
\text{every }i\in P\text{ and }j\in Q\text{ satisfy }a_{i,j}=0.
\]
If $\left\vert P\right\vert +\left\vert Q\right\vert =n$, then
\[
\det A=\left(  -1\right)  ^{\sum P+\sum\widetilde{Q}}\det\left(
\operatorname*{sub}\nolimits_{P}^{\widetilde{Q}}A\right)  \det\left(
\operatorname*{sub}\nolimits_{\widetilde{P}}^{Q}A\right)  .
\]

\end{lemma}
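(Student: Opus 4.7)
The plan is to apply the generalized (block) Laplace expansion of $\det A$ along the $\left\vert P\right\vert$ rows indexed by $P$, and then exploit the vanishing hypothesis to kill all but one term in the resulting sum. The expansion in question states
\[
\det A \;=\; \sum_{\substack{S \subseteq \left[n\right]; \\ \left\vert S\right\vert = \left\vert P\right\vert}} (-1)^{\sum P + \sum S} \det\left(\operatorname*{sub}\nolimits_{P}^{S} A\right) \det\left(\operatorname*{sub}\nolimits_{\widetilde{P}}^{\widetilde{S}} A\right),
\]
and I would cite it from \cite{detnotes}, in the same spirit as the citation supporting Lemma \ref{lem.finpowmat.minors-1}.

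To see that only a single term survives: if $S \subseteq \left[n\right]$ satisfies $\left\vert S\right\vert = \left\vert P\right\vert$ and $S \cap Q \neq \emptyset$, pick any $j_0 \in S \cap Q$; then the column of $\operatorname*{sub}\nolimits_{P}^{S} A$ corresponding to $j_0$ has entries $a_{i, j_0}$ with $i$ ranging over $P$, each of which is $0$ by hypothesis. That column of the submatrix is zero, whence $\det\left(\operatorname*{sub}\nolimits_{P}^{S} A\right) = 0$ and the term contributes nothing. Hence every surviving $S$ satisfies $S \cap Q = \emptyset$, i.e., $S \subseteq \widetilde{Q}$; the hypothesis $\left\vert P\right\vert + \left\vert Q\right\vert = n$ gives $\left\vert \widetilde{Q}\right\vert = \left\vert P\right\vert = \left\vert S\right\vert$, which forces $S = \widetilde{Q}$ and therefore $\widetilde{S} = Q$. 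Substituting reduces the expansion to the single term $(-1)^{\sum P + \sum \widetilde{Q}} \det\left(\operatorname*{sub}\nolimits_{P}^{\widetilde{Q}} A\right) \det\left(\operatorname*{sub}\nolimits_{\widetilde{P}}^{Q} A\right)$, which is exactly the claim.

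The main obstacle is reconciling sign conventions: different formulations of the generalized Laplace expansion differ by offsets such as $\binom{\left\vert P\right\vert + 1}{2}$, and the paper's definition of $\operatorname*{sub}\nolimits_{U}^{V}$ (which re-extracts rows and columns in their natural increasing order) needs to agree with whichever convention is used in the cited identity. If no off-the-shelf statement in \cite{detnotes} lines up exactly, I would fall back on a self-contained derivation: shuffle the rows of $A$ so that those in $P$ appear first, in their natural order, and simultaneously shuffle the columns so that those in $\widetilde{Q}$ appear first; the hypothesis then turns $A$ into a block lower-triangular matrix with square diagonal blocks $\operatorname*{sub}\nolimits_{P}^{\widetilde{Q}} A$ and $\operatorname*{sub}\nolimits_{\widetilde{P}}^{Q} A$ (both square precisely because $\left\vert P\right\vert + \left\vert Q\right\vert = n$), and a short inversion count shows that the product of the two shuffle signs equals $(-1)^{\sum P + \sum \widetilde{Q}}$, the two $\binom{\left\vert P\right\vert + 1}{2}$ contributions cancelling modulo $2$ since $\left\vert P\right\vert = \left\vert \widetilde{Q}\right\vert$.
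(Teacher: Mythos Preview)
Your argument via the generalized Laplace expansion is correct, and your handling of the surviving term (forcing $S=\widetilde{Q}$ from $|S|=|P|=|\widetilde{Q}|$ and $S\subseteq\widetilde{Q}$) is clean. The fallback via row/column shuffling to block-triangular form is also sound, and your observation that the two $\binom{|P|+1}{2}$ contributions cancel modulo $2$ is the right way to reconcile the sign.

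By contrast, the paper does not prove Lemma~\ref{lem.finpowmat.minors-3} at all: it simply identifies the statement with \cite[Exercise~6.47~\textbf{(b)}]{detnotes} and defers to that reference. So your proposal is strictly more self-contained than what the paper offers. Your worry about sign conventions is well placed in principle, but since the paper is already willing to cite \cite{detnotes} for this lemma, citing the generalized Laplace expansion from the same source (with whatever sign convention appears there) would be entirely in keeping with the paper's style; no fallback derivation would be needed unless you wanted to make the argument fully independent of \cite{detnotes}.
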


Lemma \ref{lem.finpowmat.minors-3} is precisely \cite[Exercise 6.47
\textbf{(b)}]{detnotes} (up to notation\footnote{Specifically, our notations
differ from those in \cite[Exercise 6.47]{detnotes} in two ways: Firstly, we
use the shorthand $\left[  n\right]  $ for $\left\{  1,2,\ldots,n\right\}  $;
secondly, what we call $\operatorname*{sub}\nolimits_{U}^{V}A$ is called
$\operatorname*{sub}\nolimits_{w\left(  U\right)  }^{w\left(  V\right)  }A$ in
the notation of \cite[Definition 6.78 and Definition 6.153]{detnotes}.}).
Thus, we don't need to prove it here.

Our next proposition tells us what happens to the determinant of a matrix if
we replace some columns of the matrix by the respective columns of the
identity matrix $I_{n}$. To state this proposition, we need the following
notation: If $A$ is an $n\times m$-matrix, and if $j\in\left\{  1,2,\ldots
,m\right\}  $, then $\operatorname*{Col}\nolimits_{j}A$ shall denote the
$j$-th column of $A$. For example, $\operatorname*{Col}\nolimits_{2}\left(
\begin{array}
[c]{ccc}%
a & b & c\\
a^{\prime} & b^{\prime} & c^{\prime}%
\end{array}
\right)  =\left(
\begin{array}
[c]{c}%
b\\
b^{\prime}%
\end{array}
\right)  $.

\begin{proposition}
\label{prop.finpowmat.minors-IA}Let $A\in\mathbb{K}^{n\times n}$ be a matrix.
Let $P$ be a subset of $\left[  n\right]  $. Let $B\in\mathbb{K}^{n\times n}$
be the $n\times n$-matrix defined by setting%
\begin{equation}
\operatorname*{Col}\nolimits_{j}B=%
\begin{cases}
\operatorname*{Col}\nolimits_{j}A, & \text{if }j\in P;\\
\operatorname*{Col}\nolimits_{j}\left(  I_{n}\right)  , & \text{if }j\notin P
\end{cases}
\ \ \ \ \ \ \ \ \ \ \text{for all }j\in\left[  n\right]  .
\label{eq.prop.finpowmat.minors-IA.ColjB=}%
\end{equation}
(That is, the columns of $B$ whose indices lie in $P$ equal the corresponding
columns of $A$, while the other columns equal the corresponding columns of
$I_{n}$.)

Then,%
\[
\det B=\det\left(  \operatorname*{sub}\nolimits_{P}^{P}A\right)  .
\]

\end{proposition}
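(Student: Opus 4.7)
The plan is to apply Lemma~\ref{lem.finpowmat.minors-3} directly to $B$, taking its two complementary subsets to be $P$ and $\widetilde{P}$ (so their sizes sum to $n$). First I would verify the off-diagonal vanishing hypothesis: for any $i\in P$ and $j\in\widetilde{P}$, the defining recipe (\ref{eq.prop.finpowmat.minors-IA.ColjB=}) puts $\operatorname*{Col}\nolimits_j B=\operatorname*{Col}\nolimits_j(I_n)$, whose $i$-th entry is $\delta_{i,j}=0$ since $i$ and $j$ lie in disjoint subsets and cannot be equal. Hence $B_{i,j}=0$ on the required block and Lemma~\ref{lem.finpowmat.minors-3} applies.

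Next I would invoke that lemma (with its $P,Q$ taken to be our $P,\widetilde{P}$), noting $\widetilde{\widetilde{P}}=P$, so the sign $(-1)^{\sum P+\sum\widetilde{\widetilde{P}}}=(-1)^{2\sum P}$ collapses to $1$. What remains is then to identify the two diagonal submatrices appearing in
\[
\det B=\det\left(\operatorname*{sub}\nolimits_{P}^{P}B\right)\cdot\det\left(\operatorname*{sub}\nolimits_{\widetilde{P}}^{\widetilde{P}}B\right).
\]
For $\operatorname*{sub}\nolimits_{P}^{P}B$ the column indices lie in $P$, so (\ref{eq.prop.finpowmat.minors-IA.ColjB=}) puts us in its first case and every such entry agrees with the corresponding entry of $A$; thus $\operatorname*{sub}\nolimits_{P}^{P}B=\operatorname*{sub}\nolimits_{P}^{P}A$. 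For $\operatorname*{sub}\nolimits_{\widetilde{P}}^{\widetilde{P}}B$ the column indices lie in $\widetilde{P}$, so each entry is $\delta_{i,j}$; this submatrix is the $\left\vert\widetilde{P}\right\vert\times\left\vert\widetilde{P}\right\vert$ identity, whose determinant is $1$. Multiplying the two yields $\det B=\det\left(\operatorname*{sub}\nolimits_{P}^{P}A\right)$, as desired.

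I do not expect any serious obstacle: the whole proof reduces to matching up complementary index sets and observing that $2\sum P$ is even. As a more self-contained alternative, one could simultaneously permute the rows and columns of $B$ by a single permutation that lists the $P$-indexed positions first and the $\widetilde{P}$-indexed ones second; the resulting matrix is block upper-triangular with diagonal blocks $\operatorname*{sub}\nolimits_{P}^{P}A$ and $I_{\left\vert\widetilde{P}\right\vert}$ (the top-right block vanishes by the same calculation as above), and the sign contributions of the row and column permutations cancel, so the block-triangular determinant formula yields the claim without appealing to Lemma~\ref{lem.finpowmat.minors-3}.
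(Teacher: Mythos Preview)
Your proof is correct and follows the same route as the paper: apply Lemma~\ref{lem.finpowmat.minors-3} to $B$ with the pair $(P,\widetilde{P})$, verify the vanishing of the $P\times\widetilde{P}$ block from the identity-column description, observe that the sign $(-1)^{2\sum P}$ collapses, and identify the two diagonal subdeterminants as $\det(\operatorname*{sub}\nolimits_P^P A)$ and $\det I_{|\widetilde{P}|}=1$. One small slip in your alternative sketch: if the top-right block vanishes then the permuted matrix is block \emph{lower}-triangular, not upper-triangular; this does not affect the conclusion, since the determinant of a block-triangular matrix (either orientation) is the product of the determinants of the diagonal blocks.
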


\Needspace{5cm}

\begin{example}
Let $n=4$ and $A=\left(
\begin{array}
[c]{cccc}%
a & b & c & d\\
a^{\prime} & b^{\prime} & c^{\prime} & d^{\prime}\\
a^{\prime\prime} & b^{\prime\prime} & c^{\prime\prime} & d^{\prime\prime}\\
a^{\prime\prime\prime} & b^{\prime\prime\prime} & c^{\prime\prime\prime} &
d^{\prime\prime\prime}%
\end{array}
\right)  $ and $P=\left\{  2,4\right\}  $. Then, the matrix $B$ in Proposition
\ref{prop.finpowmat.minors-IA} is given by%
\[
B=\left(
\begin{array}
[c]{cccc}%
1 & b & 0 & d\\
0 & b^{\prime} & 0 & d^{\prime}\\
0 & b^{\prime\prime} & 1 & d^{\prime\prime}\\
0 & b^{\prime\prime\prime} & 0 & d^{\prime\prime\prime}%
\end{array}
\right)  .
\]
Proposition \ref{prop.finpowmat.minors-IA} states that this matrix satisfies
\[
\det B=\det\left(  \operatorname*{sub}\nolimits_{P}^{P}A\right)  =\det\left(
\operatorname*{sub}\nolimits_{\left\{  2,4\right\}  }^{\left\{  2,4\right\}
}A\right)  =\det\left(
\begin{array}
[c]{cc}%
b^{\prime} & d^{\prime}\\
b^{\prime\prime\prime} & d^{\prime\prime\prime}%
\end{array}
\right)  .
\]

\end{example}

\begin{proof}
[Proof of Proposition \ref{prop.finpowmat.minors-IA}.]For each $j\in P$, we
have%
\begin{align}
\operatorname*{Col}\nolimits_{j}B  &  =%
\begin{cases}
\operatorname*{Col}\nolimits_{j}A, & \text{if }j\in P;\\
\operatorname*{Col}\nolimits_{j}\left(  I_{n}\right)  , & \text{if }j\notin P
\end{cases}
\ \ \ \ \ \ \ \ \ \ \left(  \text{by (\ref{eq.prop.finpowmat.minors-IA.ColjB=}%
)}\right) \nonumber\\
&  =\operatorname*{Col}\nolimits_{j}A\ \ \ \ \ \ \ \ \ \ \left(  \text{since
}j\in P\right)  . \label{pf.prop.finpowmat.minors-IA.ColjB=ColjA}%
\end{align}
In other words, the columns of $B$ with indices $j\in P$ equal the
corresponding columns of $A$. Hence, the submatrix $\operatorname*{sub}%
\nolimits_{P}^{P}B$ of $B$ equals the corresponding submatrix
$\operatorname*{sub}\nolimits_{P}^{P}A$ of $A$ (because these two submatrices
are contained entirely in the columns with indices $j\in P$). In other words,%
\[
\operatorname*{sub}\nolimits_{P}^{P}B=\operatorname*{sub}\nolimits_{P}^{P}A.
\]

Define a subset $Q$ of $\left[  n\right]  $ by $Q=\widetilde{P}$. Thus, $Q$ is
the complement of $P$ in the $n$-element set $\left[  n\right]  $; hence,
$\left\vert Q\right\vert =n-\left\vert P\right\vert $. In other words,
$\left\vert P\right\vert +\left\vert Q\right\vert =n$. Moreover, from
$Q=\widetilde{P}$, we obtain $P=\widetilde{Q}$, so that $\widetilde{Q}=P$. For
each $j\in Q$, we have%
\begin{align*}
\operatorname*{Col}\nolimits_{j}B  &  =%
\begin{cases}
\operatorname*{Col}\nolimits_{j}A, & \text{if }j\in P;\\
\operatorname*{Col}\nolimits_{j}\left(  I_{n}\right)  , & \text{if }j\notin P
\end{cases}
\ \ \ \ \ \ \ \ \ \ \left(  \text{by (\ref{eq.prop.finpowmat.minors-IA.ColjB=}%
)}\right) \\
&  =\operatorname*{Col}\nolimits_{j}\left(  I_{n}\right)
\ \ \ \ \ \ \ \ \ \ \left(  \text{since }j\notin P\text{ (because }j\in
Q=\widetilde{P}=\left[  n\right]  \setminus P\text{)}\right)  .
\end{align*}
In other words, the columns of $B$ with indices $j\in Q$ equal the
corresponding columns of $I_{n}$. Hence, the submatrix $\operatorname*{sub}%
\nolimits_{Q}^{Q}B$ of $B$ equals the corresponding submatrix
$\operatorname*{sub}\nolimits_{Q}^{Q}\left(  I_{n}\right)  $ of $I_{n}$
(because these two submatrices are contained entirely in the columns with
indices $j\in Q$). In other words,%
\[
\operatorname*{sub}\nolimits_{Q}^{Q}B=\operatorname*{sub}\nolimits_{Q}%
^{Q}\left(  I_{n}\right)  =I_{\left\vert Q\right\vert }%
\]
(since any principal submatrix of an identity matrix is itself an identity matrix).

Write the matrix $B\in\mathbb{K}^{n\times n}$ in the form $B=\left(
b_{i,j}\right)  _{1\leq i\leq n,\ 1\leq j\leq n}$. Thus,
\begin{equation}
b_{i,j}=\left(  \text{the }\left(  i,j\right)  \text{-th entry of }B\right)
\label{pf.prop.finpowmat.minors-IA.1}%
\end{equation}
for all $i,j\in\left[  n\right]  $. Furthermore,%
\[
\text{every }i\in P\text{ and }j\in Q\text{ satisfy }b_{i,j}=0
\]
\footnote{\textit{Proof.} Let $i\in P$ and $j\in Q$. Then, $j\in
Q=\widetilde{P}=\left[  n\right]  \setminus P$ (by the definition of
$\widetilde{P}$), so that $j\notin P$. Hence, $i\neq j$ (since otherwise, we
would have $i=j\notin P$, which would contradict $i\in P$). Now, the
definition of $B$ yields%
\begin{align*}
\operatorname*{Col}\nolimits_{j}B  &  =%
\begin{cases}
\operatorname*{Col}\nolimits_{j}A, & \text{if }j\in P;\\
\operatorname*{Col}\nolimits_{j}\left(  I_{n}\right)  , & \text{if }j\notin P
\end{cases}
=\operatorname*{Col}\nolimits_{j}\left(  I_{n}\right)
\ \ \ \ \ \ \ \ \ \ \left(  \text{since }j\notin P\right) \\
&  =\left(  \underbrace{0,0,\ldots,0}_{j-1\text{ zeroes}}%
,1,\underbrace{0,0,\ldots,0}_{n-j\text{ zeroes}}\right)  ^{T}%
\end{align*}
(by the definition of $I_{n}$). Hence,%
\[
\left(  \text{the }i\text{-th entry of the vector }\operatorname*{Col}%
\nolimits_{j}B\right)  =%
\begin{cases}
1, & \text{if }i=j;\\
0, & \text{if }i\neq j
\end{cases}
=0\ \ \ \ \ \ \ \ \ \ \left(  \text{since }i\neq j\right)  .
\]
But (\ref{pf.prop.finpowmat.minors-IA.1}) yields%
\begin{align*}
b_{i,j}  &  =\left(  \text{the }\left(  i,j\right)  \text{-th entry of
}B\right) \\
&  =\left(  \text{the }i\text{-th entry of the vector }\operatorname*{Col}%
\nolimits_{j}B\right) \\
&  \ \ \ \ \ \ \ \ \ \ \left(  \text{since }\operatorname*{Col}\nolimits_{j}%
B\text{ is the }j\text{-th column of }B\right) \\
&  =0.
\end{align*}
Qed.}. Hence, Lemma \ref{lem.finpowmat.minors-3} (applied to $B$ and $b_{i,j}$
instead of $A$ and $a_{i,j}$) shows that
\begin{align*}
\det B  &  =\left(  -1\right)  ^{\sum P+\sum\widetilde{Q}}\det\left(
\operatorname*{sub}\nolimits_{P}^{\widetilde{Q}}B\right)  \det\left(
\operatorname*{sub}\nolimits_{\widetilde{P}}^{Q}B\right) \\
&  =\underbrace{\left(  -1\right)  ^{\sum P+\sum P}}_{=\left(  -1\right)
^{2\sum P}=1}\det\underbrace{\left(  \operatorname*{sub}\nolimits_{P}%
^{P}B\right)  }_{=\operatorname*{sub}\nolimits_{P}^{P}A}\det
\underbrace{\left(  \operatorname*{sub}\nolimits_{Q}^{Q}B\right)
}_{=I_{\left\vert Q\right\vert }}\ \ \ \ \ \ \ \ \ \ \left(  \text{since
}\widetilde{Q}=P\text{ and }\widetilde{P}=Q\right) \\
&  =\det\left(  \operatorname*{sub}\nolimits_{P}^{P}A\right)  \underbrace{\det
\left(  I_{\left\vert Q\right\vert }\right)  }_{=1}=\det\left(
\operatorname*{sub}\nolimits_{P}^{P}A\right)  .
\end{align*}
This proves Proposition \ref{prop.finpowmat.minors-IA}.
\end{proof}

Proposition \ref{prop.finpowmat.minors-IA} has the following consequence for
exterior powers:

\begin{corollary}
\label{cor.finpowmat.ext-sub-1}Let $A\in\mathbb{K}^{n\times n}$ be a matrix.

Let $V$ be the free $\mathbb{K}$-module $\mathbb{K}^{n}$. Consider $A$ as an
endomorphism of the free $\mathbb{K}$-module $V=\mathbb{K}^{n}$. Consider the
$n$-th exterior power $\Lambda^{n}V$ of the $\mathbb{K}$-module $V$.

Let $\left(  e_{1},e_{2},\ldots,e_{n}\right)  $ be the standard basis of the
$\mathbb{K}$-module $\mathbb{K}^{n}$ (so that $e_{i}$ is the vector with a $1$
in its $i$-th entry and $0$ everywhere else).

Let $\left(  i_{1},i_{2},\ldots,i_{n}\right)  \in\left\{  0,1\right\}  ^{n}$.
Define a subset $P$ of $\left[  n\right]  $ by $P=\left\{  p\in\left[
n\right]  \ \mid\ i_{p}=1\right\}  $. Then, in $\Lambda^{n}V$, we have%
\[
\det\left(  \operatorname*{sub}\nolimits_{P}^{P}A\right)  \cdot e_{1}\wedge
e_{2}\wedge\cdots\wedge e_{n}=A^{i_{1}}e_{1}\wedge A^{i_{2}}e_{2}\wedge
\cdots\wedge A^{i_{n}}e_{n}.
\]

\end{corollary}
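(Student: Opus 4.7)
The plan is to reduce the statement directly to Proposition \ref{prop.finpowmat.minors-IA} combined with Lemma \ref{lem.finpowmat.ext-det}. The crucial observation is that, because each $i_j$ lies in $\{0,1\}$, the vector $A^{i_j} e_j$ is either $e_j$ itself (if $i_j=0$) or $A e_j = \operatorname{Col}_j A$ (if $i_j=1$). Thus, if we define an $n\times n$-matrix $B \in \mathbb{K}^{n\times n}$ by
\[
\operatorname{Col}_j B = A^{i_j} e_j \qquad\text{for each } j \in [n],
\]
then the columns of $B$ agree with those of $A$ precisely at the indices $j \in P$ (where $i_j = 1$), and agree with the columns of $I_n$ at the indices $j \notin P$. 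This is exactly the matrix $B$ constructed in Proposition \ref{prop.finpowmat.minors-IA}, which therefore yields
\[
\det B \;=\; \det\bigl(\operatorname{sub}\nolimits_P^P A\bigr).
\]

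Next, I would apply Lemma \ref{lem.finpowmat.ext-det}\textbf{(b)} to the endomorphism $B$ of $V = \mathbb{K}^n$ and the element $p = e_1 \wedge e_2 \wedge \cdots \wedge e_n$ of $\Lambda^n V$. This gives
\[
\bigl(\Lambda^n B\bigr)\bigl(e_1 \wedge e_2 \wedge \cdots \wedge e_n\bigr) \;=\; \det B \cdot e_1 \wedge e_2 \wedge \cdots \wedge e_n.
\]
On the other hand, Lemma \ref{lem.finpowmat.ext-det}\textbf{(a)} (together with the fact that $B e_j = \operatorname{Col}_j B = A^{i_j} e_j$) yields
\[
\bigl(\Lambda^n B\bigr)\bigl(e_1 \wedge e_2 \wedge \cdots \wedge e_n\bigr) \;=\; Be_1 \wedge Be_2 \wedge \cdots \wedge Be_n \;=\; A^{i_1} e_1 \wedge A^{i_2} e_2 \wedge \cdots \wedge A^{i_n} e_n.
\]
Equating the two expressions and using $\det B = \det(\operatorname{sub}_P^P A)$ from the first step delivers exactly the claimed identity.

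There is no serious obstacle: the proof is essentially bookkeeping, in that all the substantive content lies in Proposition \ref{prop.finpowmat.minors-IA} (which interprets the principal minor as a determinant of a column-replaced matrix) and in the standard formula $\Lambda^n u = \det u$ for top exterior powers of endomorphisms of $\mathbb{K}^n$. The only point deserving a sentence of care is verifying that the matrix $B$ built from $A^{i_j} e_j$ matches the hypothesis of Proposition \ref{prop.finpowmat.minors-IA}, which follows immediately from the case analysis $i_j \in \{0,1\}$ together with the definition $P = \{p : i_p = 1\}$.
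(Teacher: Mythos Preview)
Your proof is correct and follows essentially the same approach as the paper: define the auxiliary matrix $B$ whose $j$-th column is $A^{i_j}e_j$, observe that this coincides with the matrix $B$ from Proposition~\ref{prop.finpowmat.minors-IA} (so $\det B = \det(\operatorname*{sub}\nolimits_P^P A)$), and then apply Lemma~\ref{lem.finpowmat.ext-det} to relate $(\Lambda^n B)(e_1\wedge\cdots\wedge e_n)$ simultaneously to $\det B \cdot e_1\wedge\cdots\wedge e_n$ and to $A^{i_1}e_1\wedge\cdots\wedge A^{i_n}e_n$. The only cosmetic difference is the order of exposition.
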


\begin{proof}
[Proof of Corollary \ref{cor.finpowmat.ext-sub-1}.]Define an $n\times
n$-matrix $B\in\mathbb{K}^{n\times n}$ as in Proposition
\ref{prop.finpowmat.minors-IA}. Consider $B$ as an endomorphism of the free
$\mathbb{K}$-module $V=\mathbb{K}^{n}$ as well.

We have $A^{i_{p}}e_{p}=Be_{p}$ for every $p\in\left[  n\right]
$\ \ \ \ \footnote{\textit{Proof.} Let $p\in\left[  n\right]  $. Recall that
$\left(  e_{1},e_{2},\ldots,e_{n}\right)  $ is the standard basis of the
$\mathbb{K}$-module $\mathbb{K}^{n}$. Thus, $e_{p}$ is the column vector with
a $1$ in its $p$-th entry and $0$'s everywhere else. Hence, for every $n\times
n$-matrix $C\in\mathbb{K}^{n\times n}$, we have%
\[
Ce_{p}=\left(  \text{the }p\text{-th column of }C\right)  =\operatorname*{Col}%
\nolimits_{p}C
\]
(by the definition of $\operatorname*{Col}\nolimits_{p}C$). Applying this to
$C=B$, we obtain $Be_{p}=\operatorname*{Col}\nolimits_{p}B$. The same argument
(using $A$ instead of $B$) shows that $Ae_{p}=\operatorname*{Col}%
\nolimits_{p}A$.
\par
We are in one of the following two cases:
\par
\textit{Case 1:} We have $p\in P$.
\par
\textit{Case 2:} We have $p\notin P$.
\par
Let us first consider Case 1. In this case, we have $p\in P$. In other words,
$i_{p}=1$ (by the definition of $P$). Thus, $A^{i_{p}}=A^{1}=A$. Hence,
$A^{i_{p}}e_{p}=Ae_{p}=\operatorname*{Col}\nolimits_{p}A$. Comparing this with%
\begin{align*}
Be_{p}  &  =\operatorname*{Col}\nolimits_{p}B=%
\begin{cases}
\operatorname*{Col}\nolimits_{p}A, & \text{if }p\in P;\\
\operatorname*{Col}\nolimits_{p}\left(  I_{n}\right)  , & \text{if }p\notin P
\end{cases}
\ \ \ \ \ \ \ \ \ \ \left(  \text{by (\ref{eq.prop.finpowmat.minors-IA.ColjB=}%
), applied to }j=p\right) \\
&  =\operatorname*{Col}\nolimits_{p}A\ \ \ \ \ \ \ \ \ \ \left(  \text{since
}p\in P\right)  ,
\end{align*}
we obtain $A^{i_{p}}e_{p}=Be_{p}$. Hence, $A^{i_{p}}e_{p}=Be_{p}$ is proven in
Case 1.
\par
Let us next consider Case 2. In this case, we have $p\notin P$. In other
words, $i_{p}\neq1$ (by the definition of $P$). But $i_{p}\in\left\{
0,1\right\}  $ (since $\left(  i_{1},i_{2},\ldots,i_{n}\right)  \in\left\{
0,1\right\}  ^{n}$). Hence, $i_{p}=0$ (since $i_{p}\neq1$). Thus, $A^{i_{p}%
}=A^{0}=I_{n}$. Hence, $A^{i_{p}}e_{p}=I_{n}e_{p}=e_{p}$. Comparing this with%
\begin{align*}
Be_{p}  &  =\operatorname*{Col}\nolimits_{p}B=%
\begin{cases}
\operatorname*{Col}\nolimits_{p}A, & \text{if }p\in P;\\
\operatorname*{Col}\nolimits_{p}\left(  I_{n}\right)  , & \text{if }p\notin P
\end{cases}
\ \ \ \ \ \ \ \ \ \ \left(  \text{by (\ref{eq.prop.finpowmat.minors-IA.ColjB=}%
), applied to }j=p\right) \\
&  =\operatorname*{Col}\nolimits_{p}\left(  I_{n}\right)
\ \ \ \ \ \ \ \ \ \ \left(  \text{since }p\notin P\right) \\
&  =e_{p}\ \ \ \ \ \ \ \ \ \ \left(  \text{since the columns of }I_{n}\text{
are }e_{1},e_{2},\ldots,e_{n}\right)  ,
\end{align*}
we obtain $A^{i_{p}}e_{p}=Be_{p}$. Hence, $A^{i_{p}}e_{p}=Be_{p}$ is proven in
Case 2.
\par
We have now proven $A^{i_{p}}e_{p}=Be_{p}$ in both Cases 1 and 2. Thus, the
proof of $A^{i_{p}}e_{p}=Be_{p}$ is complete.}. Combining these equalities, we
obtain%
\begin{equation}
A^{i_{1}}e_{1}\wedge A^{i_{2}}e_{2}\wedge\cdots\wedge A^{i_{n}}e_{n}%
=Be_{1}\wedge Be_{2}\wedge\cdots\wedge Be_{n}.
\label{pf.cor.finpowmat.ext-sub-1.1}%
\end{equation}

Recall that $B$ is an endomorphism of the free $\mathbb{K}$-module
$\mathbb{K}^{n}$. Hence, Lemma \ref{lem.finpowmat.ext-det} \textbf{(b)}
(applied to $\mathbb{L}=\mathbb{K}$, $u=B$ and $p=e_{1}\wedge e_{2}%
\wedge\cdots\wedge e_{n}$) yields%
\begin{align*}
\left(  \Lambda^{n}B\right)  \left(  e_{1}\wedge e_{2}\wedge\cdots\wedge
e_{n}\right)   &  =\underbrace{\det B}_{\substack{=\det\left(
\operatorname*{sub}\nolimits_{P}^{P}A\right)  \\\text{(by Proposition
\ref{prop.finpowmat.minors-IA})}}}\cdot e_{1}\wedge e_{2}\wedge\cdots\wedge
e_{n}\\
&  =\det\left(  \operatorname*{sub}\nolimits_{P}^{P}A\right)  \cdot
e_{1}\wedge e_{2}\wedge\cdots\wedge e_{n}.
\end{align*}
Hence,%
\begin{align*}
&  \det\left(  \operatorname*{sub}\nolimits_{P}^{P}A\right)  \cdot e_{1}\wedge
e_{2}\wedge\cdots\wedge e_{n}\\
&  =\left(  \Lambda^{n}B\right)  \left(  e_{1}\wedge e_{2}\wedge\cdots\wedge
e_{n}\right) \\
&  =Be_{1}\wedge Be_{2}\wedge\cdots\wedge Be_{n}\ \ \ \ \ \ \ \ \ \ \left(
\text{by the definition of the map }\Lambda^{n}B\right) \\
&  =A^{i_{1}}e_{1}\wedge A^{i_{2}}e_{2}\wedge\cdots\wedge A^{i_{n}}%
e_{n}\ \ \ \ \ \ \ \ \ \ \left(  \text{by (\ref{pf.cor.finpowmat.ext-sub-1.1}%
)}\right)  .
\end{align*}
This proves Corollary \ref{cor.finpowmat.ext-sub-1}.
\end{proof}

\begin{lemma}
\label{lem.finpowmat.ext-alter}Let $V$ be any $\mathbb{K}$-module. Let $A$ be
an endomorphism of the $\mathbb{K}$-module $V$. Let $w_{1},w_{2},\ldots
,w_{n}\in V$ be arbitrary vectors. Assume that two of these $n$ vectors
$w_{1},w_{2},\ldots,w_{n}$ are equal. Let $k\in\mathbb{Z}$. Then,
\[
\sum_{\substack{i_{1},i_{2},\ldots,i_{n}\in\left\{  0,1\right\}
;\\i_{1}+i_{2}+\cdots+i_{n}=n-k}}A^{i_{1}}w_{1}\wedge A^{i_{2}}w_{2}%
\wedge\cdots\wedge A^{i_{n}}w_{n}=0
\]
in the exterior power $\Lambda^{n}V$ of the $\mathbb{K}$-module $V$.
\end{lemma}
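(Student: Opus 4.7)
The plan is to exploit the alternating property of the wedge product via a sign-reversing involution on the index set of the sum. Let $S$ denote the set of tuples $(i_1, i_2, \ldots, i_n) \in \{0,1\}^n$ satisfying $i_1 + i_2 + \cdots + i_n = n - k$. Since two of the vectors $w_1, w_2, \ldots, w_n$ are equal, we fix indices $r < s$ with $w_r = w_s$.

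First I would define the involution $\sigma : S \to S$ that swaps the $r$-th and $s$-th entries of a tuple (leaving all other entries fixed). This is well-defined on $S$ because swapping two entries preserves their sum, hence preserves the constraint $i_1 + i_2 + \cdots + i_n = n-k$. The fixed points of $\sigma$ are precisely those tuples with $i_r = i_s$. For such a fixed tuple, we have $A^{i_r} w_r = A^{i_s} w_s$ (since both $w_r = w_s$ and $i_r = i_s$), so the corresponding wedge product has two equal factors and therefore vanishes in $\Lambda^n V$ by the alternating property.

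Next, for a non-fixed tuple $(i_1, \ldots, i_n) \in S$, let $(i_1', \ldots, i_n') := \sigma(i_1, \ldots, i_n)$. Then $i_r' = i_s$ and $i_s' = i_r$, while $i_j' = i_j$ for all other $j$. The wedge product corresponding to $\sigma(i_1, \ldots, i_n)$ is obtained from the wedge product corresponding to $(i_1, \ldots, i_n)$ by swapping the $r$-th factor $A^{i_r} w_r$ with the $s$-th factor $A^{i_s} w_s$; this uses crucially that $w_r = w_s$, so that $A^{i_s} w_r = A^{i_s} w_s$ and $A^{i_r} w_s = A^{i_r} w_r$. By the alternating property of the wedge product, swapping two factors negates the result, so the two terms cancel.

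Finally, I would conclude by partitioning the sum over $S$ into the fixed points of $\sigma$ (each contributing $0$) and the orbits of size $2$ (each contributing a canceling pair), yielding the total value $0$. The only subtle point is to verify the rewriting step $A^{i_s} w_r \wedge \cdots \wedge A^{i_r} w_s = A^{i_s} w_s \wedge \cdots \wedge A^{i_r} w_r$ carefully using $w_r = w_s$, but once this is spelled out the argument is completely routine. No further ingredients beyond the standard alternating property of $\wedge$ are needed.
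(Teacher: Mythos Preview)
Your proof is correct and follows essentially the same approach as the paper's: both fix the indices $r<s$ (the paper calls them $u<v$) with $w_r=w_s$, use the transposition swapping these two positions as an involution on the index set, observe that fixed tuples ($i_r=i_s$) give wedge products with a repeated factor and hence vanish, and show that the remaining tuples pair off into terms that are negatives of each other via antisymmetry of the wedge product. The only cosmetic difference is that the paper writes the argument as a three-way split $\alpha+\beta+\gamma$ according to whether $i_u<i_v$, $i_u=i_v$, or $i_u>i_v$, whereas you phrase it in the equivalent language of fixed points and size-$2$ orbits of a sign-reversing involution.
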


\begin{proof}
[Proof of Lemma \ref{lem.finpowmat.ext-alter}.]We have assumed that two of the
$n$ vectors $w_{1},w_{2},\ldots,w_{n}$ are equal. In other words, there exist
two elements $u$ and $v$ of $\left[  n\right]  $ such that $u<v$ and
$w_{u}=w_{v}$. Consider these $u$ and $v$.

Let $Z$ be the set $\left\{  \left(  i_{1},i_{2},\ldots,i_{n}\right)
\in\left\{  0,1\right\}  ^{n}\ \mid\ i_{1}+i_{2}+\cdots+i_{n}=n-k\right\}  $.
Then, the summation sign \textquotedblleft$\sum_{\substack{i_{1},i_{2}%
,\ldots,i_{n}\in\left\{  0,1\right\}  ;\\i_{1}+i_{2}+\cdots+i_{n}=n-k}%
}$\textquotedblright\ can be rewritten as \textquotedblleft$\sum_{\left(
i_{1},i_{2},\ldots,i_{n}\right)  \in Z}$\textquotedblright.

Now, let $\sigma$ be the permutation of the set $\left[  n\right]  $ that
swaps $u$ with $v$ while leaving all other elements unchanged. (This
permutation is known as a transposition.) We write $\sigma p$ for the image of
an element $p\in\left[  n\right]  $ under this permutation $\sigma$. The
definition of $\sigma$ yields $\sigma u=v$ and $\sigma v=u$. Hence, it is easy
to see that%
\begin{equation}
w_{\sigma p}=w_{p}\ \ \ \ \ \ \ \ \ \ \text{for all }p\in\left[  n\right]
\label{pf.lem.finpowmat.ext-alter.wsip}%
\end{equation}
\footnote{\textit{Proof of (\ref{pf.lem.finpowmat.ext-alter.wsip}):} Let
$p\in\left[  n\right]  $. We must prove that $w_{\sigma p}=w_{p}$. We have
either $p=u$ or $p=v$ or $p\notin\left\{  u,v\right\}  $. In each of these
three cases, let us check that $w_{\sigma p}=w_{p}$ holds:
\par
\begin{itemize}
\item If $p=u$, then
\begin{align*}
w_{\sigma p}  &  =w_{v}\ \ \ \ \ \ \ \ \ \ \left(  \text{since }%
\sigma\underbrace{p}_{=u}=\sigma u=v\right) \\
&  =w_{u}\ \ \ \ \ \ \ \ \ \ \left(  \text{since }w_{u}=w_{v}\right) \\
&  =w_{p}\ \ \ \ \ \ \ \ \ \ \left(  \text{since }u=p\right)  .
\end{align*}
Thus, we have proven $w_{\sigma p}=w_{p}$ if $p=u$.
\par
\item A similar computation proves $w_{\sigma p}=w_{p}$ if $p=v$.
\par
\item If $p\notin\left\{  u,v\right\}  $, then $\sigma p=p$ (by the definition
of $\sigma$) and thus $w_{\sigma p}=w_{p}$.
\end{itemize}
\par
Thus, $w_{\sigma p}=w_{p}$ always holds. This proves
(\ref{pf.lem.finpowmat.ext-alter.wsip}).}.

Now, each $\left(  i_{1},i_{2},\ldots,i_{n}\right)  \in Z$ satisfies either
$i_{u}<i_{v}$ or $i_{u}=i_{v}$ or $i_{u}>i_{v}$. Hence, the sum $\sum_{\left(
i_{1},i_{2},\ldots,i_{n}\right)  \in Z}A^{i_{1}}w_{1}\wedge A^{i_{2}}%
w_{2}\wedge\cdots\wedge A^{i_{n}}w_{n}$ can be split up as follows:%
\begin{equation}
\sum_{\left(  i_{1},i_{2},\ldots,i_{n}\right)  \in Z}A^{i_{1}}w_{1}\wedge
A^{i_{2}}w_{2}\wedge\cdots\wedge A^{i_{n}}w_{n}=\alpha+\beta+\gamma,
\label{pf.lem.finpowmat.ext-alter.=a+b+c}%
\end{equation}
where%
\begin{align}
\alpha &  =\sum_{\substack{\left(  i_{1},i_{2},\ldots,i_{n}\right)  \in
Z;\\i_{u}<i_{v}}}A^{i_{1}}w_{1}\wedge A^{i_{2}}w_{2}\wedge\cdots\wedge
A^{i_{n}}w_{n};\label{pf.lem.finpowmat.ext-alter.a=}\\
\beta &  =\sum_{\substack{\left(  i_{1},i_{2},\ldots,i_{n}\right)  \in
Z;\\i_{u}=i_{v}}}A^{i_{1}}w_{1}\wedge A^{i_{2}}w_{2}\wedge\cdots\wedge
A^{i_{n}}w_{n};\label{pf.lem.finpowmat.ext-alter.b=}\\
\gamma &  =\sum_{\substack{\left(  i_{1},i_{2},\ldots,i_{n}\right)  \in
Z;\\i_{u}>i_{v}}}A^{i_{1}}w_{1}\wedge A^{i_{2}}w_{2}\wedge\cdots\wedge
A^{i_{n}}w_{n}. \label{pf.lem.finpowmat.ext-alter.c=}%
\end{align}
Consider these $\alpha,\beta,\gamma$.

If $\left(  i_{1},i_{2},\ldots,i_{n}\right)  \in Z$ satisfies $i_{u}=i_{v}$,
then $\underbrace{A^{i_{u}}}_{\substack{=A^{i_{v}}\\\text{(since }i_{u}%
=i_{v}\text{)}}}\underbrace{w_{u}}_{=w_{v}}=A^{i_{v}}w_{v}$ and therefore%
\begin{equation}
A^{i_{1}}w_{1}\wedge A^{i_{2}}w_{2}\wedge\cdots\wedge A^{i_{n}}w_{n}=0
\label{pf.lem.finpowmat.ext-alter.b2}%
\end{equation}
(because the exterior product is alternating). Hence,
\[
\beta=\sum_{\substack{\left(  i_{1},i_{2},\ldots,i_{n}\right)  \in
Z;\\i_{u}=i_{v}}}\underbrace{A^{i_{1}}w_{1}\wedge A^{i_{2}}w_{2}\wedge
\cdots\wedge A^{i_{n}}w_{n}}_{\substack{=0\\\text{(by
(\ref{pf.lem.finpowmat.ext-alter.b2}))}}}=0.
\]

Furthermore, fix $\left(  i_{1},i_{2},\ldots,i_{n}\right)  \in Z$. Then, the
exterior product $A^{i_{\sigma1}}w_{\sigma1}\wedge A^{i_{\sigma2}}w_{\sigma
2}\wedge\cdots\wedge A^{i_{\sigma n}}w_{\sigma n}$ is obtained from $A^{i_{1}%
}w_{1}\wedge A^{i_{2}}w_{2}\wedge\cdots\wedge A^{i_{n}}w_{n}$ by swapping two
factors (because $\sigma$ is a transposition). Therefore,%
\[
A^{i_{\sigma1}}w_{\sigma1}\wedge A^{i_{\sigma2}}w_{\sigma2}\wedge\cdots\wedge
A^{i_{\sigma n}}w_{\sigma n}=-A^{i_{1}}w_{1}\wedge A^{i_{2}}w_{2}\wedge
\cdots\wedge A^{i_{n}}w_{n}%
\]
(since the exterior product is antisymmetric). Comparing this with
\[
A^{i_{\sigma1}}\underbrace{w_{\sigma1}}_{\substack{=w_{1}\\\text{(by
(\ref{pf.lem.finpowmat.ext-alter.wsip}))}}}\wedge A^{i_{\sigma2}%
}\underbrace{w_{\sigma2}}_{\substack{=w_{2}\\\text{(by
(\ref{pf.lem.finpowmat.ext-alter.wsip}))}}}\wedge\cdots\wedge A^{i_{\sigma n}%
}\underbrace{w_{\sigma n}}_{\substack{=w_{n}\\\text{(by
(\ref{pf.lem.finpowmat.ext-alter.wsip}))}}}=A^{i_{\sigma1}}w_{1}\wedge
A^{i_{\sigma2}}w_{2}\wedge\cdots\wedge A^{i_{\sigma n}}w_{n},
\]
we obtain%
\begin{equation}
A^{i_{\sigma1}}w_{1}\wedge A^{i_{\sigma2}}w_{2}\wedge\cdots\wedge A^{i_{\sigma
n}}w_{n}=-A^{i_{1}}w_{1}\wedge A^{i_{2}}w_{2}\wedge\cdots\wedge A^{i_{n}}%
w_{n}. \label{pf.lem.finpowmat.ext-alter.a2}%
\end{equation}

Now, forget that we fixed $\left(  i_{1},i_{2},\ldots,i_{n}\right)  $. We thus
have proven (\ref{pf.lem.finpowmat.ext-alter.a2}) for each $\left(
i_{1},i_{2},\ldots,i_{n}\right)  \in Z$.

But it is easy to see that the map%
\begin{align}
\left\{  \left(  i_{1},i_{2},\ldots,i_{n}\right)  \in Z\ \mid\ i_{u}%
<i_{v}\right\}   &  \rightarrow\left\{  \left(  i_{1},i_{2},\ldots
,i_{n}\right)  \in Z\ \mid\ i_{u}>i_{v}\right\}  ,\nonumber\\
\left(  i_{1},i_{2},\ldots,i_{n}\right)   &  \mapsto\left(  i_{\sigma
1},i_{\sigma2},\ldots,i_{\sigma n}\right)
\label{pf.lem.finpowmat.ext-alter.bij}%
\end{align}
is well-defined and is a bijection\footnote{Indeed, all this map does is
swapping the $u$-th and the $v$-th entry of the $n$-tuple it is being applied
to (because $\sigma$ swaps $u$ with $v$ while leaving all other numbers
unchanged). Hence, it preserves the sum of all entries of the $n$-tuple. Thus,
it sends an $n$-tuple in $Z$ to an $n$-tuple in $Z$. Furthermore, if we apply
this map to an $n$-tuple $\left(  i_{1},i_{2},\ldots,i_{n}\right)  $
satisfying $i_{u}<i_{v}$, then its image under this map will be an $n$-tuple
$\left(  i_{1},i_{2},\ldots,i_{n}\right)  $ satisfying $i_{u}>i_{v}$ (since it
swaps the $u$-th and the $v$-th entry of the $n$-tuple). This shows that this
map is well-defined. In order to prove that it is a bijection, we just need to
construct its inverse; this is easily done (it is given by the same recipe
(\ref{pf.lem.finpowmat.ext-alter.bij}) as our original map, but it goes in the
opposite direction).}. Hence, we can substitute $\left(  i_{\sigma1}%
,i_{\sigma2},\ldots,i_{\sigma n}\right)  $ for $\left(  i_{1},i_{2}%
,\ldots,i_{n}\right)  $ in the sum on the right hand side of
(\ref{pf.lem.finpowmat.ext-alter.c=}). We thus find%
\begin{align*}
&  \sum_{\substack{\left(  i_{1},i_{2},\ldots,i_{n}\right)  \in Z;\\i_{u}%
>i_{v}}}A^{i_{1}}w_{1}\wedge A^{i_{2}}w_{2}\wedge\cdots\wedge A^{i_{n}}w_{n}\\
&  =\sum_{\substack{\left(  i_{1},i_{2},\ldots,i_{n}\right)  \in
Z;\\i_{u}<i_{v}}}\underbrace{A^{i_{\sigma1}}w_{1}\wedge A^{i_{\sigma2}}%
w_{2}\wedge\cdots\wedge A^{i_{\sigma n}}w_{n}}_{\substack{=-A^{i_{1}}%
w_{1}\wedge A^{i_{2}}w_{2}\wedge\cdots\wedge A^{i_{n}}w_{n}\\\text{(by
(\ref{pf.lem.finpowmat.ext-alter.a2}))}}}\\
&  =-\underbrace{\sum_{\substack{\left(  i_{1},i_{2},\ldots,i_{n}\right)  \in
Z;\\i_{u}<i_{v}}}A^{i_{1}}w_{1}\wedge A^{i_{2}}w_{2}\wedge\cdots\wedge
A^{i_{n}}w_{n}}_{\substack{=\alpha\\\text{(by
(\ref{pf.lem.finpowmat.ext-alter.a=}))}}}=-\alpha.
\end{align*}
Hence, (\ref{pf.lem.finpowmat.ext-alter.c=}) becomes%
\[
\gamma=\sum_{\substack{\left(  i_{1},i_{2},\ldots,i_{n}\right)  \in
Z;\\i_{u}>i_{v}}}A^{i_{1}}w_{1}\wedge A^{i_{2}}w_{2}\wedge\cdots\wedge
A^{i_{n}}w_{n}=-\alpha.
\]
Now, (\ref{pf.lem.finpowmat.ext-alter.=a+b+c}) becomes%
\[
\sum_{\left(  i_{1},i_{2},\ldots,i_{n}\right)  \in Z}A^{i_{1}}w_{1}\wedge
A^{i_{2}}w_{2}\wedge\cdots\wedge A^{i_{n}}w_{n}=\alpha+\underbrace{\beta}%
_{=0}+\underbrace{\gamma}_{=-\alpha}=\alpha+\left(  -\alpha\right)  =0.
\]
In other words,%
\[
\sum_{\substack{i_{1},i_{2},\ldots,i_{n}\in\left\{  0,1\right\}
;\\i_{1}+i_{2}+\cdots+i_{n}=n-k}}A^{i_{1}}w_{1}\wedge A^{i_{2}}w_{2}%
\wedge\cdots\wedge A^{i_{n}}w_{n}=0
\]
(since the summation sign \textquotedblleft$\sum_{\substack{i_{1},i_{2}%
,\ldots,i_{n}\in\left\{  0,1\right\}  ;\\i_{1}+i_{2}+\cdots+i_{n}=n-k}%
}$\textquotedblright\ can be rewritten as \textquotedblleft$\sum_{\left(
i_{1},i_{2},\ldots,i_{n}\right)  \in Z}$\textquotedblright). This proves Lemma
\ref{lem.finpowmat.ext-alter}.
\end{proof}

We now finally can prove Proposition \ref{prop.finpowmat.almk-coeff} again:

\begin{proof}
[Second proof of Proposition \ref{prop.finpowmat.almk-coeff}.]The map%
\begin{align*}
V\times V\times\cdots\times V  &  \rightarrow\Lambda^{n}V,\\
\left(  w_{1},w_{2},\ldots,w_{n}\right)   &  \mapsto\sum_{\substack{i_{1}%
,i_{2},\ldots,i_{n}\in\left\{  0,1\right\}  ;\\i_{1}+i_{2}+\cdots+i_{n}%
=n-k}}A^{i_{1}}w_{1}\wedge A^{i_{2}}w_{2}\wedge\cdots\wedge A^{i_{n}}w_{n}%
\end{align*}
is $\mathbb{K}$-multilinear and alternating\footnote{Indeed, it is easy to
show that it is $\mathbb{K}$-multilinear. But then, Lemma
\ref{lem.finpowmat.ext-alter} shows that it is alternating.}. Hence, the
universal property of $\Lambda^{n}V$ (see, e.g., \cite[Theorem 3.3]%
{Conrad-extmod}) shows that there is a unique $\mathbb{K}$-linear map
$\Phi:\Lambda^{n}V\rightarrow\Lambda^{n}V$ that satisfies%
\begin{align*}
\Phi\left(  w_{1}\wedge w_{2}\wedge\cdots\wedge w_{n}\right)   &
=\sum_{\substack{i_{1},i_{2},\ldots,i_{n}\in\left\{  0,1\right\}
;\\i_{1}+i_{2}+\cdots+i_{n}=n-k}}A^{i_{1}}w_{1}\wedge A^{i_{2}}w_{2}%
\wedge\cdots\wedge A^{i_{n}}w_{n}\\
&  \ \ \ \ \ \ \ \ \ \ \text{for all }w_{1},w_{2},\ldots,w_{n}\in V.
\end{align*}
Consider this $\Phi$.

Let $\left(  e_{1},e_{2},\ldots,e_{n}\right)  $ be the standard basis of the
$\mathbb{K}$-module $\mathbb{K}^{n}$ (so that $e_{i}$ is the vector with a $1$
in its $i$-th entry and $0$ everywhere else). Thus, $\left(  e_{1}%
,e_{2},\ldots,e_{n}\right)  $ is a basis of the $\mathbb{K}$-module $V$ (since
$\mathbb{K}^{n}=V$). Hence, Lemma \ref{lem.finpowmat.ext-rankn} (applied to
$\mathbb{L}=\mathbb{K}$, $M=V$ and $b_{i}=e_{i}$) shows that the $1$-tuple
$\left(  e_{1}\wedge e_{2}\wedge\cdots\wedge e_{n}\right)  $ is a basis of the
$\mathbb{K}$-module $\Lambda^{n}V$.

Let
\[
G:\left\{  0,1\right\}  ^{n}\rightarrow\left\{  \text{subsets of }\left[
n\right]  \right\}
\]
be the map that sends each $n$-tuple $\left(  i_{1},i_{2},\ldots,i_{n}\right)
\in\left\{  0,1\right\}  ^{n}$ to the subset \newline$\left\{  p\in\left[
n\right]  \ \mid\ i_{p}=1\right\}  $ of $\left[  n\right]  $. This map $G$ is
a bijection (and is, in fact, the famous correspondence between bitstrings and
subsets of $\left[  n\right]  $). Furthermore, each $n$-tuple $\left(
i_{1},i_{2},\ldots,i_{n}\right)  \in\left\{  0,1\right\}  ^{n}$ satisfies
$G\left(  i_{1},i_{2},\ldots,i_{n}\right)  =\left\{  p\in\left[  n\right]
\ \mid\ i_{p}=1\right\}  $ (by the definition of $G$) and therefore%
\begin{equation}
\det\left(  \operatorname*{sub}\nolimits_{G\left(  i_{1},i_{2},\ldots
,i_{n}\right)  }^{G\left(  i_{1},i_{2},\ldots,i_{n}\right)  }A\right)  \cdot
e_{1}\wedge e_{2}\wedge\cdots\wedge e_{n}=A^{i_{1}}e_{1}\wedge A^{i_{2}}%
e_{2}\wedge\cdots\wedge A^{i_{n}}e_{n}
\label{pf.prop.finpowmat.almk-coeff.2nd.3}%
\end{equation}
(by Corollary \ref{cor.finpowmat.ext-sub-1}, applied to $P=G\left(
i_{1},i_{2},\ldots,i_{n}\right)  $). Moreover, each $n$-tuple \newline$\left(
i_{1},i_{2},\ldots,i_{n}\right)  \in\left\{  0,1\right\}  ^{n}$ satisfies%
\begin{equation}
\left\vert G\left(  i_{1},i_{2},\ldots,i_{n}\right)  \right\vert =i_{1}%
+i_{2}+\cdots+i_{n}, \label{pf.prop.finpowmat.almk-coeff.2nd.sizeG}%
\end{equation}
because
\begin{align*}
i_{1}+i_{2}+\cdots+i_{n}  &  =\sum_{p\in\left[  n\right]  }i_{p}%
=\sum_{\substack{p\in\left[  n\right]  ;\\i_{p}\neq1}}\underbrace{i_{p}%
}_{\substack{=0\\\text{(since }i_{p}\in\left\{  0,1\right\}  \\\text{(because
}\left(  i_{1},i_{2},\ldots,i_{n}\right)  \in\left\{  0,1\right\}
^{n}\text{)}\\\text{but }i_{p}\neq1\text{)}}}+\sum_{\substack{p\in\left[
n\right]  ;\\i_{p}=1}}\underbrace{i_{p}}_{=1}\\
&  =\underbrace{\sum_{\substack{p\in\left[  n\right]  ;\\i_{p}\neq1}}0}%
_{=0}+\sum_{\substack{p\in\left[  n\right]  ;\\i_{p}=1}}1=\sum_{\substack{p\in
\left[  n\right]  ;\\i_{p}=1}}1=\left\vert \left\{  p\in\left[  n\right]
\ \mid\ i_{p}=1\right\}  \right\vert \cdot1\\
&  =\left\vert \underbrace{\left\{  p\in\left[  n\right]  \ \mid
\ i_{p}=1\right\}  }_{\substack{=G\left(  i_{1},i_{2},\ldots,i_{n}\right)
\\\text{(by the definition of }G\text{)}}}\right\vert =\left\vert G\left(
i_{1},i_{2},\ldots,i_{n}\right)  \right\vert .
\end{align*}

Now,%
\begin{align}
&  \underbrace{a_{k}}_{\substack{=\left(  -1\right)  ^{n-k}\sum
_{\substack{P\subseteq\left[  n\right]  ;\\\left\vert P\right\vert =n-k}%
}\det\left(  \operatorname*{sub}\nolimits_{P}^{P}A\right)  \\\text{(by
Corollary \ref{cor.finpowmat.minors-2})}}}\cdot e_{1}\wedge e_{2}\wedge
\cdots\wedge e_{n}\nonumber\\
&  =\left(  -1\right)  ^{n-k}\sum_{\substack{P\subseteq\left[  n\right]
;\\\left\vert P\right\vert =n-k}}\det\left(  \operatorname*{sub}%
\nolimits_{P}^{P}A\right)  \cdot e_{1}\wedge e_{2}\wedge\cdots\wedge
e_{n}\nonumber\\
&  =\left(  -1\right)  ^{n-k}\underbrace{\sum_{\substack{\left(  i_{1}%
,i_{2},\ldots,i_{n}\right)  \in\left\{  0,1\right\}  ^{n};\\\left\vert
G\left(  i_{1},i_{2},\ldots,i_{n}\right)  \right\vert =n-k}}}_{\substack{=\sum
_{\substack{\left(  i_{1},i_{2},\ldots,i_{n}\right)  \in\left\{  0,1\right\}
^{n};\\i_{1}+i_{2}+\cdots+i_{n}=n-k}}\\\text{(by
(\ref{pf.prop.finpowmat.almk-coeff.2nd.sizeG}))}}}\underbrace{\det\left(
\operatorname*{sub}\nolimits_{G\left(  i_{1},i_{2},\ldots,i_{n}\right)
}^{G\left(  i_{1},i_{2},\ldots,i_{n}\right)  }A\right)  \cdot e_{1}\wedge
e_{2}\wedge\cdots\wedge e_{n}}_{\substack{=A^{i_{1}}e_{1}\wedge A^{i_{2}}%
e_{2}\wedge\cdots\wedge A^{i_{n}}e_{n}\\\text{(by
(\ref{pf.prop.finpowmat.almk-coeff.2nd.3}))}}}\nonumber\\
&  \ \ \ \ \ \ \ \ \ \ \left(
\begin{array}
[c]{c}%
\text{here, we have substituted }G\left(  i_{1},i_{2},\ldots,i_{n}\right)
\text{ for }P\text{ in the sum,}\\
\text{since the map }G:\left\{  0,1\right\}  ^{n}\rightarrow\left\{
\text{subsets of }\left[  n\right]  \right\}  \text{ is a bijection}%
\end{array}
\right) \nonumber\\
&  =\left(  -1\right)  ^{n-k}\underbrace{\sum_{\substack{\left(  i_{1}%
,i_{2},\ldots,i_{n}\right)  \in\left\{  0,1\right\}  ^{n};\\i_{1}+i_{2}%
+\cdots+i_{n}=n-k}}}_{=\sum_{\substack{i_{1},i_{2},\ldots,i_{n}\in\left\{
0,1\right\}  ;\\i_{1}+i_{2}+\cdots+i_{n}=n-k}}}A^{i_{1}}e_{1}\wedge A^{i_{2}%
}e_{2}\wedge\cdots\wedge A^{i_{n}}e_{n}\nonumber\\
&  =\left(  -1\right)  ^{n-k}\underbrace{\sum_{\substack{i_{1},i_{2}%
,\ldots,i_{n}\in\left\{  0,1\right\}  ;\\i_{1}+i_{2}+\cdots+i_{n}%
=n-k}}A^{i_{1}}e_{1}\wedge A^{i_{2}}e_{2}\wedge\cdots\wedge A^{i_{n}}e_{n}%
}_{\substack{=\Phi\left(  e_{1}\wedge e_{2}\wedge\cdots\wedge e_{n}\right)
\\\text{(by the definition of }\Phi\text{)}}}\nonumber\\
&  =\left(  -1\right)  ^{n-k}\Phi\left(  e_{1}\wedge e_{2}\wedge\cdots\wedge
e_{n}\right)  . \label{pf.prop.finpowmat.almk-coeff.2nd.9}%
\end{align}

Now, let $w_{1},w_{2},\ldots,w_{n}\in V$ be arbitrary. Then, there exists some
$\lambda\in\mathbb{K}$ such that $w_{1}\wedge w_{2}\wedge\cdots\wedge
w_{n}=\lambda\cdot e_{1}\wedge e_{2}\wedge\cdots\wedge e_{n}$ (since the
$1$-tuple $\left(  e_{1}\wedge e_{2}\wedge\cdots\wedge e_{n}\right)  $ is a
basis of the $\mathbb{K}$-module $\Lambda^{n}V$). Consider this $\lambda$.
Now,%
\begin{align*}
a_{k}\cdot\underbrace{w_{1}\wedge w_{2}\wedge\cdots\wedge w_{n}}%
_{=\lambda\cdot e_{1}\wedge e_{2}\wedge\cdots\wedge e_{n}}  &  =\lambda
\cdot\underbrace{a_{k}\cdot e_{1}\wedge e_{2}\wedge\cdots\wedge e_{n}%
}_{\substack{=\left(  -1\right)  ^{n-k}\Phi\left(  e_{1}\wedge e_{2}%
\wedge\cdots\wedge e_{n}\right)  \\\text{(by
(\ref{pf.prop.finpowmat.almk-coeff.2nd.9}))}}}\\
&  =\lambda\cdot\left(  -1\right)  ^{n-k}\Phi\left(  e_{1}\wedge e_{2}%
\wedge\cdots\wedge e_{n}\right) \\
&  =\left(  -1\right)  ^{n-k}\underbrace{\lambda\cdot\Phi\left(  e_{1}\wedge
e_{2}\wedge\cdots\wedge e_{n}\right)  }_{\substack{=\Phi\left(  \lambda\cdot
e_{1}\wedge e_{2}\wedge\cdots\wedge e_{n}\right)  \\\text{(since the map }%
\Phi\text{ is }\mathbb{K}\text{-linear)}}}\\
&  =\left(  -1\right)  ^{n-k}\Phi\left(  \underbrace{\lambda\cdot e_{1}\wedge
e_{2}\wedge\cdots\wedge e_{n}}_{=w_{1}\wedge w_{2}\wedge\cdots\wedge w_{n}%
}\right) \\
&  =\left(  -1\right)  ^{n-k}\underbrace{\Phi\left(  w_{1}\wedge w_{2}%
\wedge\cdots\wedge w_{n}\right)  }_{\substack{=\sum_{\substack{i_{1}%
,i_{2},\ldots,i_{n}\in\left\{  0,1\right\}  ;\\i_{1}+i_{2}+\cdots+i_{n}%
=n-k}}A^{i_{1}}w_{1}\wedge A^{i_{2}}w_{2}\wedge\cdots\wedge A^{i_{n}}%
w_{n}\\\text{(by the definition of }\Phi\text{)}}}\\
&  =\left(  -1\right)  ^{n-k}\sum_{\substack{i_{1},i_{2},\ldots,i_{n}%
\in\left\{  0,1\right\}  ;\\i_{1}+i_{2}+\cdots+i_{n}=n-k}}A^{i_{1}}w_{1}\wedge
A^{i_{2}}w_{2}\wedge\cdots\wedge A^{i_{n}}w_{n}.
\end{align*}
Thus, Proposition \ref{prop.finpowmat.almk-coeff} is proven again.
\end{proof}

\subsection{\label{sec.finpowmat.converse}Appendix:
Proof of Proposition \ref{prop.finpowmat.char-int-conv}}

We have yet to prove Proposition \ref{prop.finpowmat.char-int-conv}. This is
much easier than proving its converse, which we have already done. We begin by
stating a trivial consequence of Theorem \ref{thm.finpowmat.int-G10}:

\begin{corollary}
\label{cor.finpowmat.int-then-finger}Let $\mathbb{K}$ be a commutative ring.
Let $\mathbb{L}$ be a commutative $\mathbb{K}$-algebra. Let $n\in\mathbb{N}$.
Let $u\in\mathbb{L}$. Assume that there exists a monic polynomial
$f\in\mathbb{K}\left[  t\right]  $ of degree $n$ such that $f\left(  u\right)
=0$. Then, $\mathbb{K}\left[  u\right]  =\left\langle u^{0},u^{1}%
,\ldots,u^{n-1}\right\rangle _{\mathbb{K}}$.
\end{corollary}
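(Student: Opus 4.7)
The plan is to observe that Corollary \ref{cor.finpowmat.int-then-finger} is an immediate consequence of Theorem \ref{thm.finpowmat.int-G10}. Indeed, the hypothesis of the corollary (the existence of a monic polynomial $f \in \mathbb{K}\left[t\right]$ of degree $n$ with $f\left(u\right) = 0$) is literally Assertion $\mathcal{A}$ of Theorem \ref{thm.finpowmat.int-G10}, and the conclusion of the corollary (the equality $\mathbb{K}\left[u\right] = \left\langle u^{0}, u^{1}, \ldots, u^{n-1}\right\rangle_{\mathbb{K}}$) is literally Assertion $\mathcal{D}$ of that theorem.

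So the proof is a single sentence: assume the hypothesis; then Assertion $\mathcal{A}$ of Theorem \ref{thm.finpowmat.int-G10} is satisfied; hence, by the equivalence $\mathcal{A} \Longleftrightarrow \mathcal{D}$ proven in that theorem, Assertion $\mathcal{D}$ is also satisfied, which is exactly the desired equality. There is no obstacle here — the corollary is simply a restatement of one implication of Theorem \ref{thm.finpowmat.int-G10}, extracted for convenience so that it can be cited directly in the proof of Proposition \ref{prop.finpowmat.char-int-conv} in Section \ref{sec.finpowmat.converse} without needing to invoke the other equivalent assertions $\mathcal{B}$ and $\mathcal{C}$.
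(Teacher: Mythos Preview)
Your proposal is correct and matches the paper's own proof essentially verbatim: both simply observe that the hypothesis is Assertion $\mathcal{A}$ of Theorem~\ref{thm.finpowmat.int-G10} and the conclusion is Assertion $\mathcal{D}$, then invoke the equivalence $\mathcal{A} \Longleftrightarrow \mathcal{D}$.
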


\begin{proof}
[Proof of Corollary \ref{cor.finpowmat.int-then-finger}.]Assertion
$\mathcal{A}$ of Theorem \ref{thm.finpowmat.int-G10} holds (since there exists
a monic polynomial $f\in\mathbb{K}\left[  t\right]  $ of degree $n$ such that
$f\left(  u\right)  =0$). Hence, Assertion $\mathcal{D}$ of of Theorem
\ref{thm.finpowmat.int-G10} holds as well (since Theorem
\ref{thm.finpowmat.int-G10} yields that Assertions $\mathcal{A}$ and
$\mathcal{D}$ are equivalent). In other words, $\mathbb{K}\left[  u\right]
=\left\langle u^{0},u^{1},\ldots,u^{n-1}\right\rangle _{\mathbb{K}}$. This
proves Corollary \ref{cor.finpowmat.int-then-finger}.
\end{proof}

\begin{proof}
[Proof of Proposition \ref{prop.finpowmat.char-int-conv}.]Let $\mathbb{M}$ be
the $\mathbb{K}$-subalgebra of $\mathbb{L}$ generated by the coefficients of
the characteristic polynomial $\chi_{A}\in\mathbb{L}\left[  t\right]  $. Let
$u_{1},u_{2},\ldots,u_{m}$ be these coefficients. These coefficients
$u_{1},u_{2},\ldots,u_{m}$ are integral over $\mathbb{K}$ (since we assumed
that each coefficient of the characteristic polynomial $\chi_{A}\in
\mathbb{L}\left[  t\right]  $ is integral over $\mathbb{K}$), and generate
$\mathbb{M}$ as a $\mathbb{K}$-algebra (by the definition of $\mathbb{M}$);
thus, in particular, they are elements of $\mathbb{M}$. Hence, Theorem
\ref{thm.finpowmat.fin-alg} (applied to $\mathbb{M}$ instead of $\mathbb{L}$)
yields that the $\mathbb{K}$-module $\mathbb{M}$ is finitely generated. Thus,
the $\mathbb{K}$-module $\mathbb{M}^{n}$ is finitely generated as well.

Clearly, $\mathbb{L}$ is a commutative $\mathbb{M}$-algebra (since
$\mathbb{M}$ is a subring of $\mathbb{L}$). Also, the $\mathbb{M}$-algebra
$\mathbb{M}\left[  A\right]  $ is commutative (since it is generated by a
single element $A$ over the commutative ring $\mathbb{M}$).

On the other hand, the Cayley-Hamilton theorem yields $\chi_{A}\left(
A\right)  =0$. But the characteristic polynomial $\chi_{A}$ is a monic
polynomial of degree $n$; furthermore, all its coefficients belong to
$\mathbb{M}$ (since $\mathbb{M}$ was defined to be the $\mathbb{K}$-subalgebra
of $\mathbb{L}$ generated by these coefficients). Thus, the polynomial
$\chi_{A}$ belongs to $\mathbb{M}\left[  t\right]  $. Hence, there exists a
monic polynomial $f\in\mathbb{M}\left[  t\right]  $ of degree $n$ such that
$f\left(  A\right)  =0$ (namely, $f=\chi_{A}$). Hence, Corollary
\ref{cor.finpowmat.int-then-finger} (applied to $\mathbb{M}$, $\mathbb{M}%
\left[  A\right]  $ and $A$ instead of $\mathbb{K}$, $\mathbb{L}$ and $u$)
yields $\mathbb{M}\left[  A\right]  =\left\langle A^{0},A^{1},\ldots
,A^{n-1}\right\rangle _{\mathbb{M}}$. Thus, each element of $\mathbb{M}\left[
A\right]  $ is an $\mathbb{M}$-linear combination of the powers $A^{0}%
,A^{1},\ldots,A^{n-1}$. Therefore, the $\mathbb{K}$-module homomorphism%
\begin{align*}
\pi:\mathbb{M}^{n} &  \rightarrow\mathbb{M}\left[  A\right]  ,\\
\left(  m_{0},m_{1},\ldots,m_{n-1}\right)   &  \mapsto m_{0}A^{0}+m_{1}%
A^{1}+\cdots+m_{n-1}A^{n-1}%
\end{align*}
is surjective. Hence, Lemma \ref{lem.finpowmat.finger-sur} (applied to
$M=\mathbb{M}^{n}$, $N=\mathbb{M}\left[  A\right]  $ and $f=\pi$) shows that
the $\mathbb{K}$-module $\mathbb{M}\left[  A\right]  $ is finitely generated.
This $\mathbb{K}$-module $\mathbb{M}\left[  A\right]  $ clearly satisfies
$A\cdot\mathbb{M}\left[  A\right]  \subseteq\mathbb{M}\left[  A\right]  $
(since it is a $\mathbb{K}$-algebra and contains $A$). Moreover, every
$v\in\mathbb{M}\left[  A\right]  $ satisfying $v\cdot\mathbb{M}\left[
A\right]  =0$ satisfies $v=0$ (since it satisfies $v=v\cdot\underbrace{1}%
_{\in\mathbb{M}\left[  A\right]  }\in v\cdot\mathbb{M}\left[  A\right]  =0$).
Hence, Corollary \ref{cor.finpowmat.det-crit} (applied to $\mathbb{M}\left[
A\right]  $, $A$, $\mathbb{M}\left[  A\right]  $ and $\mathbb{M}\left[
A\right]  $ instead of $\mathbb{L}$, $u$, $C$ and $U$) yields that
$A\in\mathbb{M}\left[  A\right]  $ is integral over $\mathbb{K}$. In other
words, $A\in\mathbb{L}^{n\times n}$ is integral over $\mathbb{K}$. This proves
Proposition \ref{prop.finpowmat.char-int-conv}.
\end{proof}

\section{Dynamical Behaviour of Linear and Additive Cellular Automata}
\label{sec:LCA}

A \emph{discrete time dynamical system} (DTDS) is a pair $(\mathcal{X}, \mathcal{F})$, where~$\mathcal{X}$ is any set  equipped with a distance function~$d$ and~$\mathcal{F} : \mathcal{X} \to \mathcal{X}$ is a map that is
continuous on~$\mathcal{X}$ according to the topology induced by~$d$ (see~\cite{KatHas95, LinMar95} for a background on discrete time dynamical systems). The main goal of this section is to prove that several important properties of discrete time dynamical systems are decidable for additive cellular automata over a finite abelian group. First of all, we will prove that the following properties are decidable for a nontrivial subclass of additive cellular automata, namely, the linear cellular automata over $\K^n$ (see~\cite{LebMar95} for an introduction to them), where $\K$ is the finite ring  $\Z/m\Z$:
\begin{itemize}
\item sensitivity to the initial conditions and equicontinuity;
\item equicontinuity;
\item topological transitivity and other mixing properties;
\item ergodicity and other ergodic properties.
\end{itemize}
Then, we will extend the decidability results about the above mentioned properties and other ones (as injectivity and surjectivity) from linear cellular automata to additive cellular automata over a finite abelian group.
 
\subsection{Background on DTDS and Cellular Automata}
We begin by reviewing some general notions about discrete time dynamical systems and cellular automata. 

\medskip
Let $(\mathcal{X}, \mathcal{F})$ be a DTDS. 

We say that $(\mathcal{X}, \mathcal{F})$ is \emph{surjective}, resp., \emph{injective}, if  $\mathcal{F}$ is \emph{surjective}, resp., \emph{injective}.  
The DTDS $(\mathcal{X}, \mathcal{F})$ is \emph{sensitive to the initial conditions} (or simply \emph{sensitive}) if there exists $\varepsilon>0$ such that for any $x\in \mathcal{X}$ and any
$\delta>0$ there is an element $y\in \mathcal{X}$ such that
$0<d(y,x)<\delta$ and $d(\mathcal{F}^k(y),\mathcal{F}^k(x))>\varepsilon$
for some $k\in\n$. The system $(\mathcal{X}, \mathcal{F})$ is said to be \emph{equicontinuous} if $\forall\varepsilon>0$ there exists $\delta>0$ such that for all $x,y\in \mathcal{X}$,
$d(x,y)<\delta$ implies that $\forall k\in\n,\;d(\mathcal{F}^k(x),\mathcal{F}^k(y))<\varepsilon$.  
 As dynamical properties, sensitivity and equicontinuity represent the main features of unstable and stable dynamical systems, respectively. The former is the well-known basic component and essence of the chaotic behavior of discrete time dynamical systems, while the latter is a strong form of stability. 
 
The DTDS $(\mathcal{X}, \mathcal{F})$ is \emph{topologically transitive} (or, simply, \emph{transitive}) if  for all nonempty open subsets $U$ and $V$ of
$\mathcal{X}$ there exists a natural number $h$ such that $\mathcal{F}^h(U)\cap V\neq \emptyset$, while it is said to be \emph{topologically mixing} if for all nonempty open subsets $U$ and $V$ of
$\mathcal{X}$ there exists a natural number $h_0$ such that the previous intersection condition holds for every $h\geq h_0$. Clearly, topological mixing is a stronger condition than transitivity. Moreover, $(\mathcal{X}, \mathcal{F})$ is \emph{topologically weakly mixing} if the DTDS $(\mathcal{X}\times\mathcal{X}, \mathcal{F} \times \mathcal{F})$ is topologically transitive, while it is \emph{totally transitive} if  $(\mathcal{X}, \mathcal{F}^h)$ is topologically transitive for all $h\in\N$.

Let $(\mathcal{X},{\cal{M}},\mu)$ be a probability space and let $(\mathcal{X}, \mathcal{F})$ be a DTDS where $\mathcal{F}$ is a measurable map which preserves  $\mu$, \ie,
$\mu(E)=\mu(\mathcal{F}^{-1}(E))$ for every $E\in {\cal{M}}$. The DTDS $(\mathcal{X}, \mathcal{F})$, or,  the map $\mathcal{F}$, is \emph{ergodic} with respect to $\mu$ if for every $E\in {\cal{M}}$
\[
\left(E=\mathcal{F}^{-1}(E)\right) \Rightarrow \mu(E)(1-\mu(E))=0
\]
It is well known that $\mathcal{F}$ is ergodic iff for any pair of sets $A,B\in\mathcal{M}$ it holds that 
\[ 
\lim_{h \to\infty} \frac{1}{h}\sum_{i=0}^{h-1}\mu (\mathcal{F}^{-i}(A) \cap B)=\mu(A) \mu(B)
\]
The DTDS $(\mathcal{X}, \mathcal{F})$ is \emph{(ergodic) mixing}, if for any pair of sets $A,B\in\mathcal{M}$ it holds that 
%
\[ 
\lim_{h \to\infty} \mu (\mathcal{F}^{-h}(A) \cap B)=\mu(A) \mu(B)\enspace,
\]
while it is \emph{(ergodic) weak mixing}, 
 if for any pair of sets $A,B\in\mathcal{M}$ it holds that
\[ 
\lim_{h \to\infty} \frac{1}{h}\sum_{i=0}^{h-1}| \mu (\mathcal{F}^{-i}(A) \cap B)-\mu(A) \mu(B)|=0
\]

We now recall some general notions about cellular automata. 

\smallskip
Let $S$ be a finite set. A configuration over $S$ is a map from $\Z$ to $S$. 
We consider the following \textit{space of configurations}
$$
S^{\Z}=\left\{\ccc|\ \ccc\colon{\Z}\to S\right\}.
$$
Each element  $\ccc\in S^{\Z}$ can be visualized as an infinite
one-dimensional cell lattice in which each cell $i\in\Z$ contains the element $\ccc_i\in S$.

\smallskip

Let $r\in\N$ and $\delta\colon S^{2r+1}\to S^n$ be any map.  We say that $r$
is the radius of~$\delta$. 
\begin{definition}[Cellular Automaton]
A \emph{one-dimensional CA based on a radius $r$ local rule $\delta$} 
is a pair $(S^{\Z},F)$, where
$$
F\colon S^{\Z}\to S^{\Z},
$$
is the {\em global transition map} defined as follows:  
\begin{equation} \label{FDdef}
\forall \ccc\in S^{\Z},\,\forall i\in\Z, \quad F(\ccc)_i=\delta\left(\ccc_{i-r}, \ldots, \ccc_{i+r}\right).
\end{equation}
In other words, the content of cell $i$ in the configuration $F(\ccc)$ is a
function of the content of cells $i-r,\ldots,i+r$ in the
configuration~$\ccc$.  
\end{definition}
We stress that the local rule~$\delta$ 
completely determines the global rule~$F$ of a CA. 

\medskip
In order to study the dynamical properties of one-dimensional CA, we
introduce a distance over the space of the configurations. Namely, $S^{\Z}$ is 
equipped with the Tychonoff distance $d$ defined as follows
\[
\forall \ccc,\ccc'\in\az, \quad d(\ccc,\ccc')=\frac{1}{2^\ell}\;\quad \text{where}\;\ell=\min\{i\in\N\,:\,\ccc_i\ne
\ccc'_i \;\text{or}\;\ccc_{-i}\ne \ccc'_{-i}\}\enspace.
\]

It is easy to verify that metric topology induced by $d$ coincides with the product
topology induced by the discrete topology on $\az$.  With this topology,
$\az$ is a compact and totally disconnected space and the global transition map $F$ of any CA $(\az, F)$ turns out to be (uniformly) continuous. Therefore, any CA itself is also a discrete time dynamical system. Moreover, any map $F:\az\to\az$ is the global transition rule of a CA if and only if $F$ is (uniformly) continuous and $F\circ\sigma=\sigma\circ F$, where $\sigma:\az\to\az$ is the \emph{shift map} defined as $\forall \ccc\in\az$, $\forall i\in\Z$, $\sigma(\ccc)_i=\ccc_{i+1}$. From now, when no misunderstanding is possible, we identify a CA with its global rule. Moreover, whenever an ergodic property is considered for CA, $\mu$ is the well-known Haar measure over the collection $\mathcal{M}$ of measurable subsets of $S^{\Z}$, \ie, the one defined as the product measure induced by the uniform probability distribution over $S$.
\subsection{Additive and Linear Cellular Automata}
Let us introduce the background of additive CA. The alphabet $S$ will be a finite abelian group $G$, with group operation $+$, neutral element $0$, and inverse operation $-$. In this way, the configuration space $G^{\Z}$ turns out to be a finite abelian group, too, where the group operation of $G^{\Z}$ is the component-wise extension of $+$ to $G^{\Z}$. With an abuse of notation, we denote by the same symbols $+$, $0$, and $-$ the group operation, the neutral element, and the inverse operation, respectively, both of $G$ and $G^{\Z}$. Observe that $+$ and $-$ are continuous functions in the topology induced by the metric $d$. A configuration $\ccc\in G^{\Z}$ is said to be \emph{finite} if the number of positions $i\in\Z$ with $\ccc_i\neq 0$ is finite.

\begin{definition}[Additive Cellular Automata]
An \emph{additive CA} over a abelian finite group $G$ is a  CA 
$(G^{\Z},F)$  
where the global transition map $F:G^{\Z}\to G^{\Z}$  is an 
endomorphism of $G^{\Z}$.
\end{definition}
The \emph{sum of two additive CA}  $F_1$ and $F_2$ over $G$ is naturally defined as the map on $G^{\Z}$ denoted by $F_1+F_2$ and such that 
\[
\forall \ccc\in G^{\Z},\quad  (F_1+F_2)(\ccc)=F_1(\ccc)+F_2(\ccc)
\]
Clearly, $F_1+F_2$ is an additive CA over $G$.

\medskip
We now recall the notion of linear CA, an important subclass of additive CA. We stress that, whenever the term \emph{linear} is involved, the alphabet $S$ is $\mathbb{K}^n$, where $\mathbb{K} = \Z/m\Z$ for some positive integer $m$. Both $\K^n$ and $\Sp$ become $\K$-modules in the obvious (\ie, entrywise) way.


\smallskip
A local rule $\delta\colon (\mathbb{K}^n)^{2r+1}\to \mathbb{K}^n$ of radius $r$ is said to be linear
if it is defined by  $2r+1$ matrices  
$A_{-r},\ldots, A_0,\ldots, A_r\in \mathbb{K}^{n\times n}$
as follows: 
$$
\forall (x_{-r}, \ldots,x_0,\ldots,x_r)\in(\mathbb{K}^n)^{2r+1}, \quad \delta(x_{-r}, \ldots,x_0,\ldots,x_r) = 
\sum_{i=-r}^r A_i\cdot x_i
\enspace.
$$ 

\begin{definition}[Linear Cellular Automata (LCA)]
A \emph{linear CA (LCA)} over $\mathbb{K}^n$ is a CA based on a linear local rule.  
\end{definition}
We remark that for a
linear one-dimensional CA, equation~\eqref{FDdef} becomes
$$F(\ccc)_i= 
A_{i-r} \cdot \ccc_{i-r} + \ldots + A_{i+r} \cdot \ccc_{i+r}
$$

Let $\LP$ denote the set of Laurent polynomials with coefficients in $\K$. Before proceeding, let us recall the  \emph{formal power series} (fps)
\noindent
which have been successfully used to study the dynamical 
behaviour of LCA in the case $n=1$~\cite{ItoOsa83,ManMan99}. The idea of this formalism is that configurations and global rules are represented by
suitable polynomials and the application of the global rule turns into multiplications of
polynomials. 
In the more general case of LCA over $\K^n$,  a configuration $\ccc\in(\K^n)^\Z$ can be associated with the fps
\[
\P_{\ccc}(X)=\sum_{i\in\z}\ccc_i X^i = 
\begin{bmatrix}
c^1(X)\\
\vdots\\
c^n(X)
\end{bmatrix}
=
\begin{bmatrix}
\sum_{i\in\Z} c_i^1 X^i\\
\vdots\\
\sum_{i\in\Z} c_i^n X^i
\end{bmatrix}
\]
\noindent
Then, if 
$F$ is the global rule of a LCA defined by $A_{-r},\ldots, A_0,\ldots, A_r$, one finds 
\[
\P_{F(\ccc)}(X)=
A \cdot \P_{\ccc}(X)
\]
where 
$$
A=
\sum\limits_{i=-r}^{r} A_i X^{-i}\in\LP^{n\times n}
$$
is the \emph{finite fps}, or, \emph{the matrix}, \emph{associated with the LCA $F$}. In this way, for any integer $k>0$ the fps associated with $F^k$ is $A^t$, and then 
$
\P_{F^k(\ccc)}(X)=
A^k \cdot \P_{\ccc}(X)
\enspace.
$  


A matrix 
$A\in\LP^{n\times n}$ is in \emph{Frobenius normal form} if  
\begin{eqnarray}
\label{FNF}
A=
\begin{bmatrix}
0&1&0& \dots&0&0\\
0&0& 1&\ddots&0&0\\
0&0&0 &\ddots&0&0\\
\vdots&\vdots& \vdots&\ddots&\ddots&\vdots\\
\\
0        &   0 &0   & \dots &0&1\\
\\
\mm_0&\mm_1& \mm_2 &\dots &\mm_{n-2}&\mm_{n-1}
\end{bmatrix}
\end{eqnarray}
where each $\mm_i\in\LP$. Recall that the coefficients of $\chi_A$ are just the elements $\mm_i$ of the $n$-th row of $A$ (up to sign).


\begin{definition}[Frobenius LCA]
  \label{frobeniusLCA}
A LCA $(\Sp,F)$  is said to be a \emph{Frobenius LCA} if the fps $A\in\LP^{n\times n}$ associated with $F$ is in Frobenius normal form. 
\end{definition}

\subsection{Decidability Results about Linear CA}
We now deal with sensitivity and equicontinuity for LCA over $\K^n$. First of all, 
we remind that a dichotomy between sensitivity and equicontinuity holds for LCA. Moreover, these properties are characterized by the behavior of the powers of the matrix associated  with a LCA.
\begin{proposition}[\cite{Dennun19}]\label{prop:dich}
Let $\left(\left( \K^n \right)^{\Z}, F\right )$ be a LCA over $\K^n$ and let $A$ be the matrix associated with $F$. The following statements are equivalent:
\begin{enumerate}
\item $F$ is sensitive to the initial conditions;
\item $F$ is not equicontinuous;
\item $\left | \{A^1, A^2, A^3, \ldots \} \right | = \infty$.
\end{enumerate}
\end{proposition}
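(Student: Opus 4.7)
The plan is to prove the cycle $(1) \Rightarrow (2) \Rightarrow (3) \Rightarrow (1)$. The implication $(1) \Rightarrow (2)$ is immediate from the definitions, since a witness $\varepsilon > 0$ for sensitivity at any configuration $\ccc$ with any tolerance $\delta > 0$ directly contradicts the uniform statement required by equicontinuity.

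For $(2) \Rightarrow (3)$, I would argue by contraposition: assuming the set $\{A^k : k \in \N\}$ is finite, I would show $F$ is equicontinuous. Each $A^k \in \LP^{n \times n}$ is a matrix of Laurent polynomials, so a finite collection of such matrices has uniformly bounded exponent support; hence there is an integer $R$ with $A^k = \sum_{|j| \leq R} B_j^{(k)} X^{-j}$ for every $k$, where $B_j^{(k)} \in \K^{n \times n}$. Via the identity $\P_{F^k(\ccc)}(X) = A^k \cdot \P_\ccc(X)$ recalled above, this means that for every configuration $\ccc$ and every position $i$, the value $F^k(\ccc)_i$ depends only on $\ccc_{i-R}, \ldots, \ccc_{i+R}$, with the radius $R$ independent of $k$. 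Combined with $\K$-linearity of $F^k$, this yields equicontinuity: given a target $\varepsilon = 2^{-N}$, taking $\delta = 2^{-(N+R)}$ forces two configurations within distance $\delta$ to agree on the whole window $[-(N+R), N+R]$, hence their iterates $F^k(\ccc)$ agree on $[-N, N]$ for every $k$.

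For $(3) \Rightarrow (1)$, assume the set $\{A^k : k \in \N\}$ is infinite. Since $\K$ is finite, only finitely many $n \times n$ matrices of Laurent polynomials with any fixed bounded exponent support exist; hence the supports of the $A^k$ cannot be uniformly bounded. I would then extract sequences $k_\ell \in \N$ and $j_\ell \in \Z$ with $|j_\ell| \to \infty$ such that the coefficient $B_{j_\ell}^{(k_\ell)}$ of $X^{-j_\ell}$ in $A^{k_\ell}$ is nonzero, select a standard basis vector $e_{s_\ell} \in \K^n$ on which $B_{j_\ell}^{(k_\ell)}$ acts nontrivially, and form the configuration $\ccc^{(\ell)} \in \Sp$ supported only at position $j_\ell$ with value $e_{s_\ell}$. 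Then $d(\ccc^{(\ell)}, 0) = 2^{-|j_\ell|} \to 0$, while $F^{k_\ell}(\ccc^{(\ell)})_0 = B_{j_\ell}^{(k_\ell)} e_{s_\ell} \neq 0$, so $d(F^{k_\ell}(\ccc^{(\ell)}), 0) = 1$. Linearity of $F$ then converts these into sensitive perturbations of any given configuration $\ccc$: for any $\delta > 0$, pick $\ell$ with $d(\ccc^{(\ell)}, 0) < \delta$; the configuration $\ccc + \ccc^{(\ell)}$ satisfies $0 < d(\ccc + \ccc^{(\ell)}, \ccc) < \delta$ and $d(F^{k_\ell}(\ccc + \ccc^{(\ell)}), F^{k_\ell}(\ccc)) = d(F^{k_\ell}(\ccc^{(\ell)}), 0) = 1$, establishing sensitivity with constant $\varepsilon = 1/2$.

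The main technical hurdle will be pinning down the precise relation between the exponent support of $A^k$ and the set of coordinates on which $F^k(\ccc)_i$ depends. This is essentially bookkeeping through the fps identity, but the sign conventions of $X^{-i}$ versus $X^i$ and the index shift in the coefficient extraction have to be tracked carefully; once this correspondence is set up correctly, both directions of the argument become essentially transparent, and the crucial finiteness input of $\K$ (needed to turn bounded support into a finite set of matrices and conversely unbounded support into a visible one) falls into place.
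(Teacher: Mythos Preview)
The paper does not actually prove this proposition: it is quoted from \cite{Dennun19} and used as a black box. So there is no ``paper's own proof'' to compare against.

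That said, your argument is correct and is precisely the standard one. Each of your three implications goes through as written; the only points worth tightening are cosmetic. In $(2)\Rightarrow(3)$ you should perhaps state explicitly that the Tychonoff metric is translation-invariant (i.e., $d(\ccc+\ccc',\ccc)=d(\ccc',0)$), since you implicitly use this both here and in $(3)\Rightarrow(1)$. In $(3)\Rightarrow(1)$ your computation of $F^{k_\ell}(\ccc^{(\ell)})_0$ via the fps identity is exactly right (with $A^{k}=\sum_j B_j^{(k)}X^{-j}$ and $\P_{\ccc^{(\ell)}}(X)=e_{s_\ell}X^{j_\ell}$, the $X^0$ coefficient of the product is indeed $B_{j_\ell}^{(k_\ell)}e_{s_\ell}$); the sign bookkeeping you were worried about works out. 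You could in fact take $\varepsilon$ to be any constant strictly less than $1$, since you obtain $d(F^{k_\ell}(\ccc+\ccc^{(\ell)}),F^{k_\ell}(\ccc))=1$.

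It may reassure you that the paper later reproves essentially the same two mechanisms in the additive-CA setting: Lemma~\ref{explosion} is the contrapositive of your $(2)\Rightarrow(3)$ (finite power set $\Rightarrow$ bounded degrees), and Lemma~\ref{spredda} is your $(3)\Rightarrow(1)$ (a spreading generator forces sensitivity with constant $1$ via an additive perturbation). So your proof is not only correct but aligned with the ideas the authors themselves deploy.
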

Note that the statement ``$\left | \{A^1, A^2, A^3, \ldots \} \right | = \infty$'' here is clearly equivalent to ``$\left | \{A^0, A^1, A^2, \ldots \} \right | = \infty$'', which is the kind of statement discussed in Theorem~\ref{thm.finpowmat.main}. 
An immediate  consequence of Proposition~\ref{prop:dich} is that any decidable characterization of sensitivity to the initial conditions in terms of the matrices defining LCA over $\K^n$ would also provide a characterization of equicontinuity. In the sequel, we are going to show that such a characterization actually exists. First of all, we remind that the a decidable characterization of sensitivity and equicontinuity was provided for the class of Frobenius LCA in~\cite{Dennun19}. In particular, the following result holds.
\begin{theorem}[Theorem 31 in~\cite{Dennun19}]\label{froblca}
Sensitivity and equicontinuity are decidable for Frobenius LCA over $\K^n$.
\end{theorem}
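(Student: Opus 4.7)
The plan is to combine Proposition \ref{prop:dich} with the integrality criteria developed in Section \ref{chp.finpowmat}. Let $A \in \LP^{n \times n}$ be the Frobenius matrix of the given LCA, with bottom-row entries $\mm_0, \ldots, \mm_{n-1} \in \LP$ readable directly from the LCA's description. Proposition \ref{prop:dich} reduces the question to deciding whether $\{A^k : k \geq 1\}$ is infinite. By the equivalence $\mathcal{U} \Leftrightarrow \mathcal{V}$ of Proposition \ref{prop.finpowmat.char-crit}, this is equivalent to asking whether $A$ is \emph{not} integral over $\K$; and by Theorem \ref{thm.finpowmat.char-int} together with Proposition \ref{prop.finpowmat.char-int-conv}, integrality of $A$ over $\K$ is equivalent to integrality over $\K$ of each coefficient of $\chi_A \in \LP[t]$. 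Since $A$ is in Frobenius form, these coefficients are (up to sign and the leading $1$) precisely the $\mm_i$, so the whole problem reduces to deciding, for a single Laurent polynomial $f\in \LP$, whether $f$ is integral over $\K$.

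The heart of the proof is then the following explicit criterion, which I would establish next: a Laurent polynomial $f = \sum_i c_i X^i \in \LP$ is integral over $\K = \Z/m\Z$ if and only if every non-constant coefficient $c_i$ (with $i \neq 0$) is nilpotent in $\K$, equivalently, divisible by $\operatorname{rad}(m) = p_1 \cdots p_r$ where $m = p_1^{e_1} \cdots p_r^{e_r}$. This condition is manifestly checkable in finite time, so once the criterion is established the theorem follows at once. For the ``if'' direction, if every non-constant coefficient of $f$ is nilpotent, then $f - c_0$ is a finite sum of terms $c_i X^i$ with $c_i$ nilpotent; each such term is nilpotent (as $X^i$ is a unit), and a finite sum of nilpotents in a commutative ring is nilpotent, so $(f - c_0)^N = 0$ for some $N \in \N$, showing that $f$ satisfies the monic relation $(t - c_0)^N = 0$ over $\K$.

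For the converse, I would use the Chinese Remainder Theorem to reduce to the case $\K = \Z/p^e\Z$, then apply the coefficient-wise reduction homomorphism $\LP \twoheadrightarrow \mathbb{F}_p[X, X^{-1}]$: any monic polynomial witnessing integrality of $f$ descends to one witnessing integrality of its image $\bar f$ over $\mathbb{F}_p$. Since $\mathbb{F}_p[X, X^{-1}] \subseteq \mathbb{F}_p(X)$ is an integral domain and the algebraic closure of the field $\mathbb{F}_p$ inside $\mathbb{F}_p(X)$ is $\mathbb{F}_p$ itself (a standard fact: any element of $\mathbb{F}_p(X)$ algebraic over $\mathbb{F}_p$ must generate a finite subfield of $\mathbb{F}_p(X)$, and a direct UFD argument on $f/g \in \mathbb{F}_p(X)$ satisfying $\alpha^q = \alpha$ forces $\alpha \in \mathbb{F}_p$), $\bar f$ must be a constant, so every non-constant coefficient of $f$ lies in $p\K$ and is therefore nilpotent in $\Z/p^e\Z$. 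The main obstacle is this converse, whose punchline is the integral-closure computation in $\mathbb{F}_p(X)$; the temptation is to imagine that the units $X^i \in \LP$ somehow enlarge the integral closure of $\mathbb{F}_p$, and one needs the domain/transcendence argument to rule this out.
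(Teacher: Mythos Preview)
The paper does not prove this theorem at all: it is quoted verbatim as Theorem~31 of \cite{Dennun19} and used as a black box in the proof of Theorem~\ref{declca}. Your proposal, by contrast, supplies a correct self-contained argument built entirely from the machinery of Section~\ref{chp.finpowmat}.

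Your chain of reductions is sound. Proposition~\ref{prop:dich} reduces the question to finiteness of $\{A^k\}$; the equivalence $\mathcal{U}\Leftrightarrow\mathcal{V}$ of Proposition~\ref{prop.finpowmat.char-crit} reduces that to integrality of $A$ over $\K$; and Theorem~\ref{thm.finpowmat.char-int} together with Proposition~\ref{prop.finpowmat.char-int-conv} reduce that to integrality over $\K$ of each coefficient of $\chi_A$. Your decidable criterion for integrality of a single $f\in\LP$ over $\K=\Z/m\Z$ --- that every coefficient $c_i$ with $i\neq 0$ be nilpotent in $\K$, i.e.\ divisible by $\operatorname{rad}(m)$ --- is correct, and both directions of your argument for it go through: the ``if'' direction via nilpotency of $f-c_0$, and the ``only if'' direction via reduction modulo each prime $p\mid m$ and the standard fact that $\mathbb{F}_p$ is algebraically closed in $\mathbb{F}_p(X)$.

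Two remarks. First, your argument never uses the Frobenius hypothesis in an essential way: you invoke it only to read off the coefficients of $\chi_A$ as the bottom-row entries $\mm_i$, but those coefficients are computable for \emph{any} $A\in\LP^{n\times n}$. So your proof actually yields Theorem~\ref{declca} directly, bypassing both the Frobenius reduction and the external citation of \cite{Dennun19}. Second, this is precisely the shortcut that the paper's own toolkit makes available but does not take: the paper instead uses Theorem~\ref{thm.finpowmat.main} to pass from an arbitrary $A$ to a Frobenius matrix with the same characteristic polynomial and then appeals to \cite{Dennun19}, whereas you go straight from $\chi_A$ to the nilpotency test.
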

By means of the main result from Section~\ref{chp.finpowmat} we are now able to prove the following
\begin{theorem}
\label{declca}
Sensitivity and equicontinuity are decidable for LCA over $\K^n$.
\end{theorem}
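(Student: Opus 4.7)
The plan is to reduce the general case to the Frobenius case via the characteristic polynomial, using Theorem~\ref{thm.finpowmat.main} as the bridge. Let $F$ be a LCA over $\K^n$ with associated matrix $A \in \LP^{n\times n}$, where $\K = \Z/m\Z$. First I would effectively compute the characteristic polynomial $\chi_A = \det(tI_n - A) \in \LP[t]$; this is algorithmic since arithmetic in $\LP = \Z/m\Z[X,X^{-1}]$ is algorithmic (elements are finite Laurent polynomials, stored as finite lists of coefficients in $\Z/m\Z$). Then I would form the companion matrix $B \in \LP^{n\times n}$ of $\chi_A$, which is in Frobenius normal form by construction and satisfies $\chi_B = \chi_A$; this $B$ defines a Frobenius LCA $F'$ over $\K^n$ in the sense of Definition~\ref{frobeniusLCA}.

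Next I would invoke Theorem~\ref{thm.finpowmat.main} with $\mathbb{L} = \LP$ (which is a commutative $\K$-algebra, and $\K$ is a finite commutative ring, so the hypotheses are satisfied): since $\chi_A = \chi_B$, the set $\{A^0, A^1, A^2, \ldots\}$ is finite if and only if $\{B^0, B^1, B^2, \ldots\}$ is finite. Combining this equivalence with Proposition~\ref{prop:dich} applied separately to $F$ and to $F'$ (and using the observation in the paper that, for the purposes of this dichotomy, $\{A^1, A^2, \ldots\}$ and $\{A^0, A^1, A^2, \ldots\}$ are simultaneously finite or infinite), I obtain the dynamical equivalence: $F$ is sensitive iff $F'$ is sensitive, and $F$ is equicontinuous iff $F'$ is equicontinuous.

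Since the passage $A \mapsto B$ is effective and sensitivity and equicontinuity are decidable for the resulting Frobenius LCA $F'$ by Theorem~\ref{froblca}, the decidability for $F$ follows: one decides the property for $F'$ and returns the same answer for $F$. The main ``work'' of the proof has already been done in Section~\ref{chp.finpowmat} (to prove Theorem~\ref{thm.finpowmat.main}) and in \cite{Dennun19} (to prove Theorem~\ref{froblca} and Proposition~\ref{prop:dich}); what remains here is essentially the short observation that the companion-matrix construction realizes the hypothesis $\chi_A = \chi_B$ of Theorem~\ref{thm.finpowmat.main} with a $B$ that defines a Frobenius LCA. There is no serious obstacle; the only points to verify are the ring-theoretic hypotheses for Theorem~\ref{thm.finpowmat.main} (trivial) and the effectivity of the reduction (standard).
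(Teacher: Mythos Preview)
Your proposal is correct and follows essentially the same approach as the paper's own proof: reduce to the Frobenius case via the companion matrix of $\chi_A$, invoke Theorem~\ref{thm.finpowmat.main} to transfer finiteness of the power set from $A$ to $B$, translate this via Proposition~\ref{prop:dich} into the sensitivity/equicontinuity dichotomy, and then apply Theorem~\ref{froblca}. If anything, you are more explicit than the paper about the companion-matrix construction and about the effectivity of the reduction.
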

\begin{proof}
Let $\left(\left( \K^n \right)^{\Z}, G\right )$ 
be any LCA over $\K^n$ and let $A$ be the matrix associated with $G$. Consider the  Frobenius LCA $\left(\left( \K^n \right)^{\Z}, F\right )$ such that $\chi_{A}=\chi_{B}$, where $B$ is the matrix (in Frobenius normal form) associated with $F$. By Theorem~\ref{thm.finpowmat.main} and Proposition~\ref{prop:dich} the former LCA is equicontinuous if and only if the latter is.  Theorem~\ref{froblca} concludes the proof. 
\end{proof}
For a sake of completeness, we recall that injectivity and surjectivity are decidable for LCA over $\K^n$. This result follows from a characterization of these properties in terms of the determinant of the matrix associated with a LCA and from the fact that injectivity and surjectivity are decidable for LCA over $\K$ (for the latter, see~\cite{ItoOsa83}).
\begin{theorem}[\cite{LebMar95,Kari00}]
\label{decinjsurjlca}
Injectivity and equicontinuity are decidable for LCA over $\K^n$. In particular, a LCA over $\K^n$ is injective (resp., surjective) if and only if the determinant of the matrix associated with it is the fps associated with an injective (resp., surjective) LCA over $\K$.
\end{theorem}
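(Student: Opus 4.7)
The plan is to reduce the questions of injectivity and surjectivity for an LCA over $\K^n$ to the corresponding questions for a scalar LCA over $\K$, and then to invoke the decidability result for the scalar case cited from \cite{ItoOsa83}. The reduction is mediated by the determinant $\det(A) \in \LP$ of the matrix $A \in \LP^{n\times n}$ associated with the LCA, and the central algebraic tool is the classical adjugate identity $A \cdot \operatorname{adj}(A) = \operatorname{adj}(A) \cdot A = \det(A) \cdot I_n$, which holds over any commutative ring and in particular over $\LP$.

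I will first dispatch the easy direction of both equivalences. Assume the scalar LCA $F_{\det A}$ over $\K$ is injective. If $F(\ccc) = 0$, then $A \cdot \P_{\ccc}(X) = 0$, and left-multiplying by $\operatorname{adj}(A)$ yields $\det(A) \cdot \P_{\ccc}(X) = 0$; this says that each of the $n$ scalar components of $\P_{\ccc}(X)$ is killed by multiplication by $\det(A)$, so the injectivity of $F_{\det A}$ forces each component to vanish, whence $\ccc = 0$. The surjectivity analogue proceeds in the same spirit: given any $\ccc \in (\K^n)^{\Z}$, surjectivity of $F_{\det A}$ provides preimages $b^i \in \K^{\Z}$ of each scalar component $c^i$ under multiplication by $\det(A)$, and setting $\bbb = (b^1,\ldots,b^n)^T$ and $\aaa = \operatorname{adj}(A) \cdot \bbb$ one verifies $A \cdot \aaa = \det(A) \cdot \bbb = \ccc$.

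For the converse directions one must show that injectivity (resp.\ surjectivity) of $F$ forces injectivity (resp.\ surjectivity) of $F_{\det A}$. The injectivity converse is the content of a classical McCoy-type theorem: for any commutative ring $R$ and any $A \in R^{n\times n}$, the $R$-linear map $A\colon R^n \to R^n$ is injective if and only if $\det(A)$ is a non-zero-divisor in $R$. Applied to $R = \LP$, and combined with the observation that injectivity of $F$ on the full configuration space is equivalent to injectivity of $A$ on the submodule $\LP^n$ of finite configurations (identified with their Laurent-polynomial representatives), this yields the desired equivalence. For the surjectivity converse I will invoke the Garden-of-Eden theorem for CA over the amenable group $\Z$: surjectivity of $F$ is equivalent to pre-injectivity, hence to injectivity of $A$ on $\LP^n$, which is again equivalent to $\det(A)$ being a non-zero-divisor in $\LP$; and this is exactly the condition characterizing surjectivity of the scalar LCA $F_{\det A}$ by the same principle in the case $n=1$.

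The main obstacle will be the converse direction, and more specifically the commutative-algebra step that promotes injectivity of $A$ as a linear map on $\LP^n$ to the statement that $\det(A)$ is a non-zero-divisor in $\LP$. This is where McCoy's theorem (proved by iterating the adjugate argument over principal minors of $A$) does the real work; its use here is unavoidable because $\LP$ over $\K = \Z/m\Z$ is far from being a field or even a domain when $m$ is composite, so one cannot appeal to the naive ``invertibility iff nonzero determinant''. Once the determinantal characterization is in place, the final decidability step is immediate from \cite{ItoOsa83}, which handles injectivity and surjectivity for scalar LCA over $\K = \Z/m\Z$.
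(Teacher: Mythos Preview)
The paper does not supply its own proof of this result---it is stated as a citation to \cite{LebMar95,Kari00}---so there is nothing in-paper to compare against; I evaluate your proposal on its merits. Your adjugate arguments for the two ``easy'' directions are correct, and your surjectivity converse via the Garden-of-Eden theorem and McCoy is sound: surjectivity is equivalent to pre-injectivity, which for a linear $F$ is exactly injectivity of $A$ on the finite configurations $\LP^n$, and McCoy then characterizes this by $\det A$ being a non-zero-divisor in $\LP$.

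The injectivity converse, however, has a real gap. The asserted ``observation'' that injectivity of $F$ on the full configuration space is equivalent to injectivity of $A$ on $\LP^n$ is false: for $n=1$, $\K=\Z/2\Z$, $A=1+X$, the ring $\LP$ is an integral domain so $A$ acts injectively on $\LP$, yet $F_A$ annihilates the constant-$1$ configuration and is not injective on $\K^{\Z}$. Your McCoy chain therefore only delivers ``$F_A$ injective $\Rightarrow$ $\det A$ is a non-zero-divisor'', and the final step to ``$F_{\det A}$ injective'' (which would be the $n=1$ instance of the same false equivalence) does not follow. The standard repair is to use that an injective CA over a finite alphabet on $\Z$ is automatically bijective with CA inverse; for an LCA the inverse is again linear, forcing $A$ to be invertible in $\LP^{n\times n}$, i.e.\ $\det A$ to be a \emph{unit} in $\LP$---strictly stronger than non-zero-divisor. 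The $n=1$ case of the same reasoning gives ``$F_{\det A}$ injective $\Leftrightarrow$ $\det A$ is a unit'', and the desired equivalence then follows.
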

The decidability of topologically transitivity, ergodicity, and other mixing and ergodic properties for LCA over $\K^n$ has been recently proved in~\cite{Dennun19b}. In particular, authors showed the equivalence of all the mixing and ergodic properties for additive CA over a finite abelian group and the decidability  for LCA over $\K^n$ (see also~\cite{Dennun19c}). 
\begin{theorem}[\cite{Dennun19b}]\label{teo:equiv}
\label{transerg}
Let $F$ be any additive CA over a finite abelian group. The following statements are equivalent:
\begin{enumerate}
\item $F$ is topologically transitive;
\item $F$ is ergodic;
\item $F$ is surjective and for every $k\in\N$ it holds that $F^k-I$ is surjective;
\item $F$ is topologically mixing;
\item $F$ is weak topologically transitive;
\item $F$ is totally transitive;
\item $F$ is weakly ergodic mixing;
\item $F$ is ergodic mixing.
\end{enumerate}
Moreover, all the previously mentioned properties are decidable for LCA over $\K^n$.
\end{theorem}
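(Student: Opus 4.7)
The plan is to establish the equivalence of the eight properties directly on the compact abelian group $G^\Z$, and then separately handle decidability for LCA over $\K^n$.

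\textbf{Equivalence.} I would first record the cheap implications: $(4) \Rightarrow (5), (6), (1)$; $(5) \Rightarrow (1)$; $(6) \Rightarrow (1)$; $(8) \Rightarrow (7), (4)$; $(7) \Rightarrow (2)$; and $(2) \Rightarrow (1)$ (the last because the Haar measure is strictly positive on every nonempty open subset of $G^\Z$, so any ergodic continuous self-map is topologically transitive). This reduces the cycle to proving $(1) \Rightarrow (3)$ and $(3) \Rightarrow (8)$. For $(1) \Rightarrow (3)$: the additive CA $F$ and each $F^k - I$ are continuous endomorphisms of the compact abelian group $G^\Z$, so each of their images is a closed subgroup $H$. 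If $H \subsetneq G^\Z$, then $G^\Z$ decomposes into cosets of $H$ and the quotient $G^\Z/H$ is a nontrivial finite discrete group; a dense $F$-orbit in $G^\Z$ projects to a dense (hence entire) orbit in $G^\Z/H$, which quickly contradicts either surjectivity of $F$ (when $H$ is the image of $F$) or the forced relation $F^k \equiv \id \pmod{H}$ (when $H$ is the image of $F^k - I$).

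For $(3) \Rightarrow (8)$: I would apply Pontryagin duality to the compact abelian group $X := G^\Z$. Since $F$ is a continuous endomorphism preserving the Haar measure, the classical character-theoretic criterion states that $F$ is ergodic mixing iff for every nontrivial character $\chi \in \widehat{X}$ and every $k \ge 1$ one has $\chi \circ F^k \ne \chi$; equivalently, the dual endomorphism $\widehat{F}$ has no nontrivial periodic character. But $\chi \circ F^k = \chi$ is exactly the assertion that $\chi$ annihilates the image of $F^k - I$, so $F^k - I$ fails to be surjective. Thus condition (3) forces the character-theoretic criterion, giving ergodic mixing.

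\textbf{Decidability.} By the established equivalence, it suffices to decide condition (3) for an LCA over $\K^n$ with associated matrix $A \in \LP^{n\times n}$. Surjectivity of $F$ is decidable by Theorem~\ref{decinjsurjlca}. For the universal condition ``$F^k - I$ surjective for every $k \in \N$'', note that $F^k - I$ is the LCA associated with $A^k - I$, whose surjectivity by Theorem~\ref{decinjsurjlca} is governed by $\det(A^k - I) \in \LP$ being the polynomial of a surjective one-dimensional LCA (itself an effectively testable condition). To handle the quantifier over all $k$, I would reduce modulo each prime ideal of $\K$ (finite in number since $\K = \Z/m\Z$); over each residue field $\mathbb{F}_p$, the powers $A^k \bmod p$ eventually become periodic in a controlled way, and the finitely many distinct residues of $\det(A^k - I) \bmod p$ can be enumerated using the characteristic polynomial $\chi_A$, yielding an explicit bound on the $k$'s to test.

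\textbf{Main obstacle.} The technically delicate step is $(3) \Rightarrow (8)$ over nonfield base rings: when $\K = \Z/p^k\Z$, the nilpotent part of $A$ modulo $p$ can create characters that are ``almost'' periodic, and the duality argument must be upgraded to control higher-order obstructions coming from the unramified-but-nonreduced structure. The reduction to a finite check in the decidability step is also nontrivial because $\LP = \K[X, X^{-1}]$ is an infinite ring, so bounding ``all $k$'' requires a genuine spectral-type argument rather than a naive appeal to finiteness of powers.
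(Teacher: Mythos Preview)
The paper does not prove this theorem at all: it is stated as a citation of \cite{Dennun19b}, with no argument given. So there is no ``paper's proof'' to compare against, and your proposal is effectively a sketch of what the cited preprint presumably does.

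On the substance of your sketch: the equivalence argument is essentially sound. The step $(3)\Rightarrow(8)$ via the character criterion is the standard Halmos-type argument and is correct; note, however, that your self-declared ``main obstacle'' here is illusory --- the Pontryagin-duality computation only uses the compact abelian \emph{group} structure of $G^{\Z}$ and never sees the ring $\K$, so no ``nilpotent upgrade'' is needed. For $(1)\Rightarrow(3)$ your presentation has a small gap: the assertion that $G^{\Z}/H$ is automatically a \emph{finite discrete} group is false in general (e.g.\ $F=\id$ gives $H=\{0\}$). What actually works is: $F(H)\subseteq H$, so $F$ descends to an endomorphism $\bar F$ of the compact quotient with $\bar F^{\,k}=\id$; hence $\bar F$ is an automorphism fixing $0$, so every forward $\bar F$-orbit either equals $\{0\}$ or avoids $0$. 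A dense forward $F$-orbit in $G^{\Z}$ (which exists by transitivity on a perfect compact metric space) projects to a dense, hence full, orbit in the Hausdorff quotient; combined with the previous sentence this forces the quotient to be trivial, the desired contradiction. So the idea is right, but the finiteness is a \emph{consequence} of the argument, not an input.

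The decidability sketch is the genuinely incomplete part. Reducing ``$F^k-I$ surjective for all $k$'' to a finite check does require a real argument bounding the relevant $k$; your outline (reduce modulo each prime of $\K$, exploit eventual periodicity of $A^k\bmod p$, control via $\chi_A$) points in the right direction, but as written it does not yet produce an explicit bound over $\LP$, and the difficulty you flag here is the real one.
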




\subsection{From Linear to Additive CA}
In this section we are going to prove that sensitivity, equicontinuity, injectivity, surjectivity, topological transitivity, and all the properties equivalent to the latter are decidable also for additive CA over a finite abelian group. For each of them we will reach the decidability result by exploiting the analogous one obtained for LCA 
and extending it to the wide class of additive CA over a finite abelian group. 

We recall that the local rule $\delta: G^{2r+1}\to G$ of an additive CA of radius $r$ over a finite abelian group $G$ can be written as
\begin{equation}
\label{decomp}
\forall (x_{-r}, \ldots, x_{r})\in G^{2r+1}, \qquad \delta(x_{-r},\dots,x_r)=\sum_{i=-r}^{r} \delta_i(x_i)
\end{equation}
where the functions $\delta_i$ are endomorphisms of $G$.

The fundamental theorem of finite abelian groups states 
that every finite abelian group $G$ is isomorphic to
$\bigoplus_{i=1}^{h} \zetaki$ where
the numbers $k_1,\dots, k_h$ are powers of (not necessarily distinct) 
primes and $\oplus$ is the direct sum operation. 
Hence, the global rule  $F$ of an additive CA over $G$ splits into the 
direct sum of a suitable number $h'$ of additive CA 
over subgroups $G_1,\dots,G_{h'}$ with $h' \leq h$ and such that 
$\gcd(|G_i|,|G_j|)=1$ for each pair of distinct $i,j\in\{1, \ldots, h'\}$.
Each of them can be studied separately and then the analysis of the 
dynamical behavior of $F$ can be  carried out by combining together 
the results obtained for each component. 

In order to make things clearer, consider the following example.
If $F$ is an additive CA over  $G \cong \Z/4\Z \times \Z/8\Z \times \Z/3\Z \times \Z/3\Z \times \Z/{25}\Z$ then $F$  splits 
into the direct sum of 3 additive CA $F_1$, $F_2$, and $F_3$ over $\Z/4\Z \times \Z/8\Z$, $\Z/3\Z \times \Z/3\Z$ and $ \Z/{25}\Z$, respectively.
Therefore, $F$ will be sensitive to initial conditions (resp., topologically transitive)
iff at least one (resp., each) $F_i$ is sensitive to initial conditions.

The above considerations lead us to three distinct scenarios:\\[-3mm]
\begin{description}
\item[1) $G\cong\zetapk$.]
Then, $G$ is cyclic and we can define each $\delta_i$ simply 
assigning the value of  $\delta_i$ applied to the unique generator of $G$.  
Moreover, every pair $\delta_i, \delta_j$ commutes, i.e., $\delta_i \circ \delta_j = \delta_j \circ \delta_i$, and this 
makes it possible a detailed analysis of the global behavior of $F$.
Indeed, additive cellular automata over $\zetapk$ are nothing but LCA over $\zetapk$ and 
almost all dynamical properties, including sensitivity to the initial conditions, equicontinutity, injectivity, surjectivity, topological transitivity and so on are well understood and characterized (see~\cite{ManMan99}). \\[-3mm]
\item[2) $G\cong (\zetapk)^n$.] In this case, $G$ is not cyclic anymore and has $n$ generators.
We can define each $\delta_i$ assigning the value of $\delta_i$ for each generator of $G$. 
This gives rise to the class of linear CA over $(\zetapk)^n$.
Now, $\delta_i$ and $\delta_j$ do not commute in general and this makes the analysis of the dynamical behavior much harder.  Nevertheless, in Section~\ref{sec:LCA} we have proved that sensitivity and equicontinuity are decidable by exploiting the main result of Section~\ref{chp.finpowmat}. As pointed out in~\cite{Dennun19}, we also recall that linear CA over $(\zetapk)^n$ allow the investigation of some classes of non-uniform CA over $\zetapk$ (
(for these latter see
~\cite{Cattan09,Dennun12,Dennun13}
). 
\\[-3mm]
\item[3) $G\cong \bigoplus_{i=1}^{n} \zetapki$.] In this case ($\Z/4\Z \times \Z/8\Z$ in the example), $G$ is again 
not cyclic and 
turns out to be a subsystem 
of a suitable LCA. 
Then, the analysis of the dynamical behavior of $F$ is even more complex than in \textbf{2)}. 
We do not even know easy checkable characterizations of basic properties like surjectivity or injectivity so far. We will provide them in the sequel as we stated at the beginning of this section.
\end{description}
Therefore,  WLOG in the sequel we can assume that $G=\zetapku\times\ldots\times \zetapkn$ with $k_1\geq k_2\geq \ldots\geq k_n$ in order to reach our goal.
\medskip

For any $i\in\{1, \ldots, n\}$ let us denote by $\eee^{(i)}\in G^{\Z}$ the bi-infinite configuration such that $\eee_0^{(i)}=e_i$ and $\eee_j^{(i)}=0$ for every integer $j\neq 0$.
\begin{definition}
Let $(G^{\Z},F)$ be an additive CA over $G$. We say that $\eee^{(i)}\in G^{\Z}$ \emph{spreads under $F$} if for every $\ell\in\N$ there exists $k\in\N$ such that $F^k(\eee^{(i)})_j\neq 0$ for some integer $j$ with $|j|>\ell$. 
\end{definition}
\begin{remark}
Whenever we consider $\P_{\eee^{(i)}}(X)\in\LPG$, 
we will say that $\P_{\eee^{(i)}}(X)$ spreads under $F$ if for every $\ell\in\N$ there exists $k\in\N$ such that $\P_{F^k(\eee^{(i)})}(X)$ has at least one component with a non null monomial of degree which is greater than $\ell$ in absolute value. Clearly, $\P_{\eee^{(i)}}(X)$ spreads under $F$ if and only if $\eee^{(i)}$ spreads under $F$.
\end{remark}

Let $\hat{G}=\zetapku\times\ldots\times \zetapku$. Define the map $\emb: G\to \hat{G}$ as follows
$$
\forall i=1,\dots,n, \quad \forall h\in G, \quad \emb(h)^i=h^i\, p^{k_1-k_i}\enspace.
$$
\begin{definition}
We define the function $\Emb: G^{\Z}\to \hat{G}^{\Z}$ as the component-wise extension of $\emb$, \ie, 
$$
\forall \ccc\in G^{\Z}, \quad \forall j\in\Z, \quad \Emb(\ccc)_j=\emb(\ccc_j)\enspace.
$$
\end{definition}
It is easy to check that $\Emb$ is continuous and injective, but not surjective. Since every configuration $\ccc\in G^{\Z}$ (or $\hat{G}^{\Z}$) is associated with the fps $\P_{\ccc}(X)\in\LPG$ (or $\LPGt$), with an abuse of notation we will sometimes consider $\Emb$ as map from $\LPG$ to $\LPGt$ with the obvious meaning.

\medskip
For any additive CA over $G$, we are now going to define a LCA over $(\zetapku)^n$ associated to it. With a further abuse of notation, in the sequel we will write $p^{-m}$ with $m\in\N$ even if this quantity might not exist in $\zetapk$. However, we will use it only when  it multiplies $p^{m'}$ for some integer $m'>m$. In such a way $p^{m'-m}$ is well defined in $\zetapk$ and we will note it as product $p^{-m} \cdot p^{m'}$.  

\begin{definition}
Let $(G^{\Z},F)$ be any additive CA and let $\delta: G^{2r+1}\to G$ be its local rule defined, according to~\eqref{decomp}, by $2r+1$ endomorphisms $\delta_{-r}, \ldots, \delta_{r}$ of $G$ . For each $z\in\{-r, \ldots, r\}$, we define the matrix $A_z = (a^{(z)}_{i,j})_{1\leq i\leq n,\, 1\leq j\leq n}\in (\zetapku)^{n\times n}$ as 
\[
\forall i,j\in\{1, \ldots, n\}, \qquad a^{(z)}_{i,j}=p^{k_j-k_i} \cdot \delta_z(e_j)^i
\]
The \emph{LCA associated with the additive CA $(G^{\Z},F)$} is $(\hat{G}^\Z, L)$, where $L$ is defined by $A_{-r}, \ldots, A_r$ or, equivalently, by $A=\sum_{z=-r}^r A_z X^{-z}\in\LPGt^{n\times n}$.
\end{definition}
\begin{remark}
Since every $\delta_z$ is an endomorphism of $G$, by construction $A$ turns out to be well defined.
\end{remark}
\begin{remark}
\label{comm}
The following diagram commutes
\[
\begin{CD}
   G^{\Z}@>F>>&G^{\Z}\\
   @V{\Emb}VV&@VV{\Emb}V\\
   \hat{G}^{\Z}@>>L>&\hat{G}^{\Z}
\end{CD}\enspace, 
\]
\ie, $L\circ \Emb=\Emb\circ F$. This is the reason why we also say that $(\hat{G}^\Z, L)$ is the LCA associated with $(G^{\Z},F)$ \emph{via the embedding $\Emb$}. 
\end{remark}
\subsubsection{Sensitivity and Equicontinuity for Additive Cellular Automata}
Let us start with the decidability of sensitivity and equicontinuity.
\begin{lemma}\label{spredda}
Let $(G^{\Z},F)$ be any additive CA. If for some $i\in\{1, \ldots, n\}$ the configuration $\eee^{(i)}\in G^{\Z}$ spreads under $F$ then $(G^{\Z},F)$ is sensitive to the initial conditions.
\end{lemma}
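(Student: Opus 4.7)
The plan is a standard perturbation argument relying on the additivity of $F$ and its commutation with the shift $\sigma$. The target constant of sensitivity will be $\varepsilon = 1/2$, and the witnessing perturbation will be the configuration $\eee^{(i)}$ translated far from the origin.

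First, fix any configuration $\ccc \in G^{\Z}$ and any $\delta > 0$. Choose $\ell \in \mathbb{N}$ so large that $2^{-\ell} < \delta$. By the spreading hypothesis applied to $\eee^{(i)}$ with this $\ell$, there exist $k \in \mathbb{N}$ and $m \in \mathbb{Z}$ with $|m| > \ell$ such that $F^k(\eee^{(i)})_m \neq 0$. Now define the perturbed configuration
\[
\ccc' := \ccc + \sigma^{-m}(\eee^{(i)}),
\]
which differs from $\ccc$ only at position $m$ (where we have added $e_i$). Since $|m| > \ell$, the Tychonoff distance satisfies $0 < d(\ccc, \ccc') \leq 2^{-\ell} < \delta$, so $\ccc'$ is a valid perturbation.

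The heart of the proof is a one-line computation that uses two facts: $F$ is an endomorphism of $G^{\Z}$ (so $F^k$ is $G^{\Z}$-linear), and $F$ commutes with $\sigma$ (since it is a CA). Combining these,
\[
F^k(\ccc') - F^k(\ccc) \;=\; F^k\bigl(\sigma^{-m}(\eee^{(i)})\bigr) \;=\; \sigma^{-m}\bigl(F^k(\eee^{(i)})\bigr).
\]
Reading off the value at position $0$ gives
\[
F^k(\ccc')_0 - F^k(\ccc)_0 \;=\; F^k(\eee^{(i)})_m \;\neq\; 0,
\]
so $F^k(\ccc')_0 \neq F^k(\ccc)_0$, and therefore $d(F^k(\ccc), F^k(\ccc')) = 1 > 1/2 = \varepsilon$. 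Since $\ccc$ and $\delta$ were arbitrary, this establishes sensitivity with $\varepsilon = 1/2$.

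There is essentially no serious obstacle here: the argument is entirely formal once the two structural properties (additivity and shift-commutation) are invoked. The only minor care needed is the bookkeeping with the sign of the shift $\sigma^{-m}$ to ensure the nonzero value of $F^k(\eee^{(i)})$ at position $m$ really lands at position $0$ after the shift, which is immediate from $\sigma^{-m}(c)_j = c_{j+m}$.
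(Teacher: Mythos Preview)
Your approach is essentially identical to the paper's: perturb by a shifted copy of $\eee^{(i)}$ and use additivity plus shift-commutation to see the difference at the origin. The paper uses sensitivity constant $\varepsilon=1$ and the shift $\sigma^{j}$ (in your notation $\sigma^{m}$), but the argument is the same.

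There is, however, a sign slip in your bookkeeping. With the paper's convention $\sigma(\ccc)_i=\ccc_{i+1}$, one has $\sigma^{-m}(c)_j=c_{j-m}$, not $c_{j+m}$. Your claim that $\ccc'=\ccc+\sigma^{-m}(\eee^{(i)})$ differs from $\ccc$ at position $m$ is correct under this convention, but then $\bigl[\sigma^{-m}(F^k(\eee^{(i)}))\bigr]_0=F^k(\eee^{(i)})_{-m}$, not $F^k(\eee^{(i)})_m$, so the final equality does not follow as written. The fix is trivial: use $\sigma^{m}$ instead of $\sigma^{-m}$ (so the perturbation sits at position $-m$, still with $|-m|>\ell$), and then the value at position $0$ really is $F^k(\eee^{(i)})_m\neq 0$. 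This is exactly what the paper does.
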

\begin{proof}
We prove that $F$ is sensitive with constant $\varepsilon=1$. Let $\eee^{(i)}\in G^{\Z}$ be the configuration spreading under $F$. Choose arbitrarily an integer $\ell\in\N$ and a configuration $\ccc\in G^{\Z}$. Let $t\in\N$ and $j\notin\{-\ell,\ldots, \ell\}$  be the integers such that $F^t(\eee^{(i)})_j\neq 0$. Consider the configuration $\ccc'=\ccc+\sigma^j(\eee^{(i)})$. Clearly, it holds that $d(\ccc, \ccc')<2^{-\ell}$ and $F^t(\ccc')=F^t(\ccc)+ F^t(\sigma^j(\eee^{(i)}))=F^t(\ccc)+ \sigma^j(F^t(\eee^{(i)}))$. So, we get $d(F^t(\ccc'), F^t(\ccc))=1$ and this concludes the proof. 
\end{proof}
In order to prove the decidability of sensitivity, we need to deal with the degree of a polynomial. We stress that the notions we are going to introduce apply to finite Laurent polynomials.
\begin{definition}
  \label{degree}
Given any finite polynomial $\pp(X)\in\LPKuno$, the \emph{positive} (resp., \emph{negative}) \emph{degree of $\pp(X)$}, denoted by  $deg^+[\pp(X)]$ (resp., $deg^-[\pp(X)]$) is the maximum (resp., minimum) degree among those of the monomials having both positive (resp., negative) degree and coefficient which is not multiple of $p$. If there is no monomial satisfying both the required conditions, then $deg^+[\pp(X)]=0$ (resp., $deg^-[\pp(X)]$=0).
\end{definition}

\begin{lemma}
\label{explosion}
Let $(\hat{G}^\Z, L)$ be a LCA and let $A\in\LPKuno^{n\times n}$ be the matrix associated to it. If $(\hat{G}^\Z, L)$ is sensitive then for every integer $m\geq 1$ there exists an integer $k\geq 1$ such that at least one entry of $A^k$ has either positive or negative degree with absolute value which is greater than $m$. 
\end{lemma}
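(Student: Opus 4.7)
The plan is to argue by contrapositive. Assume there exists $m \geq 1$ such that for every $k \geq 1$, every entry of $A^k$ has positive and negative degrees (in the sense of Definition~\ref{degree}) bounded in absolute value by $m$. By Proposition~\ref{prop:dich}, in order to show that $L$ is not sensitive it suffices to prove that the set $\{A^k : k \geq 1\}$ is finite.

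First I observe that the hypothesis is equivalent to saying that, after reducing each coefficient modulo $p$, the matrix $\bar{A} \in \mathbb{F}_p[X,X^{-1}]^{n\times n}$ has the property that every entry of $\bar{A}^k = \overline{A^k}$ has support contained in $[-m,m]$ for every $k \geq 1$. Since there are only finitely many Laurent polynomials over $\mathbb{F}_p$ whose support lies in $[-m,m]$, this already forces the set $\{\bar{A}^k : k \geq 1\}$ to be finite.

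Next I propagate this finiteness up the filtration $\zetapku \supset p\zetapku \supset \cdots \supset p^{k_1}\zetapku = 0$ by induction on $j \in \{1,\ldots,k_1\}$, writing $B_j$ for the reduction of $A$ modulo $p^j$ and showing that $\{B_j^k : k \geq 1\}$ is finite. The base case $j=1$ is precisely what was just observed (since $B_1 = \bar{A}$), while the case $j=k_1$ yields the desired conclusion $\{A^k\}$ finite. For the inductive step, assume $\{B_{j-1}^k\}$ is finite. Then two distinct powers of $B_{j-1}$ coincide, so there exist positive integers $s > t$ with $B_{j-1}^s = B_{j-1}^t$; equivalently, $B_j^s - B_j^t = p^{j-1}D$ for some matrix $D \in (\Z/p^j\Z)[X,X^{-1}]^{n\times n}$. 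A routine induction on $\ell$ then yields
\[
B_j^{\,t+\ell(s-t)} = B_j^{\,t} + p^{j-1}D \cdot \sum_{i=0}^{\ell-1} B_j^{\,i(s-t)} \qquad \text{for every } \ell \geq 0.
\]
Because $p^{j-1}\cdot p = 0$ in $\Z/p^j\Z$, the product $p^{j-1}D \cdot B_j^{\,k}$ depends only on the mod-$p$ reductions and equals $p^{j-1}\bar{D}\cdot\bar{A}^{k}$. Hence the above formula becomes $B_j^{\,t+\ell(s-t)} = B_j^{\,t} + p^{j-1}\bar{D}\cdot S_\ell$, where $S_\ell := \sum_{i=0}^{\ell-1} \bar{A}^{i(s-t)} \in \mathbb{F}_p[X,X^{-1}]^{n\times n}$. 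Now the entries of $S_\ell$ are $\mathbb{F}_p$-linear combinations of entries of the matrices $\bar{A}^{i(s-t)}$, so they all have support in $[-m,m]$; consequently the set $\{S_\ell : \ell \geq 0\}$ is finite. Therefore two of the matrices $B_j^{\,t+\ell(s-t)}$ must coincide, and Proposition~\ref{prop.finpowmat.sg-fin} then gives $\{B_j^k : k \geq 1\}$ finite, completing the induction.

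The main obstacle is precisely this inductive lifting step: we have a support bound only on the mod-$p$ reduction $\bar{A}$, yet we must bound the orbit of $A$ modulo every power of $p$. The key observation that makes the induction go through is that the error term $p^{j-1}D$ lies in an ideal annihilated by $p$, so multiplying it by $B_j^{\,k}$ produces something determined entirely by $\bar{A}^k$, which is exactly the quantity we control by hypothesis.
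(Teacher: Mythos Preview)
Your argument is correct, and in fact it supplies a step that the paper's own proof glosses over. Both you and the paper argue by contrapositive via Proposition~\ref{prop:dich}: assuming a uniform bound on the positive/negative degrees of all $A^k$, one must show $\{A^k:k\geq 1\}$ is finite. The paper writes $A=B+pC$ with $B$ collecting the monomials whose coefficients are units, observes that the hypothesis bounds the support of (what amounts to) $\overline{A^k}$, and then asserts $|\{A^k\}|<\infty$ directly. But that last implication is not immediate: the degree of Definition~\ref{degree} ignores monomials with coefficients divisible by $p$, so a bound on it does \emph{not} bound the actual Laurent support of the entries of $A^k$ over $\zetapku$.

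Your inductive lifting through the filtration $\Z/p^j\Z$ is exactly what is needed to bridge this gap. The telescoping identity
\[
B_j^{\,t+\ell(s-t)} = B_j^{\,t} + p^{j-1}D\cdot\sum_{i=0}^{\ell-1}B_j^{\,i(s-t)}
\]
is correct (your induction goes through because powers of $B_j$ commute with each other, even though $D$ need not commute with $B_j$), and the key observation that $p^{j-1}\cdot(\,\text{anything}\,)$ depends only on the mod-$p$ residue lets you push the support bound on $\bar A^k$ up the tower. One small notational quibble: when you write ``$p^{j-1}\bar D\cdot\bar A^k$'' you are tacitly identifying an $\mathbb{F}_p$-matrix with a lift to $\Z/p^j\Z$ and then multiplying by $p^{j-1}$; this is well-defined precisely because of the annihilation you noted, but it would read more cleanly to phrase it as ``$p^{j-1}DS_\ell$ depends only on $S_\ell\bmod p$, and $S_\ell\bmod p$ ranges over a finite set.'' Otherwise the proof is complete and is genuinely more detailed than the paper's.
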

\begin{proof}
We can write $A= B+p\cdot C$ for some $A,B\in\LPKuno^{n\times n}$, where the monomials of all entries of $A$ have coefficient which is not multiple of $p$. Assume that there exists a bound $b\geq 1$ such that for every $k \geq 1$ all entries of $A^k$ have degree less than $b$ in absolute value.  Therefore, it holds that $\left | \{A^k, k\geq 1 \} \right | < \infty$ and so $(\hat{G}^\Z, L)$ is not sensitive.
\end{proof}
We are now able to prove the following important result.
\begin{theorem}
\label{teo:sens}
Let $(G^{\Z},F)$ be any additive CA over $G$ and let $(\hat{G}^\Z, L)$ be the LCA associated to it via the embedding $\Emb$. Then, the CA $(G^{\Z},F)$ is sensitive to the initial conditions if and only if $(\hat{G}^\Z, L)$ is.
\end{theorem}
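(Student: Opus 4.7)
The plan is to use the commutative diagram $L\circ\Emb=\Emb\circ F$ (Remark~\ref{comm}) combined with two elementary facts. First, $\Emb$ is an isometry onto its image: since $\emb$ is injective, the smallest index of disagreement between two configurations of $G^{\Z}$ coincides with that between their $\Emb$-images, so $d(\Emb(\aaa),\Emb(\bbb))=d(\aaa,\bbb)$. Second, any additive CA is sensitive if and only if it is sensitive at the zero configuration, since the Tychonoff metric is translation-invariant and the dynamics is additive.

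For the forward direction, assume $F$ is sensitive with constant $\varepsilon_0$. Sensitivity at $0$ provides, for each $\delta>0$, some $\ccc\in G^{\Z}$ with $0<d(\ccc,0)<\delta$ and $d(F^k(\ccc),0)>\varepsilon_0$ for some $k$. Applying $\Emb$, the isometry property together with $L^k(\Emb(\ccc))=\Emb(F^k(\ccc))$ shows that $\Emb(\ccc)\in\hat{G}^{\Z}$ is a non-zero configuration close to $0$ whose $L^k$-image is far from $0$. Hence $L$ is sensitive at $0$, and therefore sensitive everywhere.

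For the backward direction, assume $L$ is sensitive. Lemma~\ref{explosion}, combined with a pigeonhole over the finitely many index pairs, yields fixed indices $i',j$ such that the entry $(A^k)_{i',j}$ has positive or negative degree of unbounded absolute value as $k$ varies. By Definition~\ref{degree}, for every $m$ there exist $k$, a position $\ell$ with $|\ell|>m$, and a coefficient $c$ of $X^\ell$ in $(A^k)_{i',j}$ that is not divisible by $p$ in $\zetapku$. Since $k_j\geq 1$, $c$ is also not divisible by $p^{k_j}$, so $p^{k_1-k_j}c\neq 0$ in $\zetapku$. Hence the $i'$-th component of $L^k(\Emb(\eee^{(j)}))=p^{k_1-k_j}L^k(\eee^{(j)})$ is non-zero at position $\ell$, which means $\Emb(\eee^{(j)})$ spreads under $L$. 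Combining $L^k(\Emb(\eee^{(j)}))=\Emb(F^k(\eee^{(j)}))$ with the injectivity of $\emb$ gives $F^k(\eee^{(j)})_\ell\neq 0$, so $\eee^{(j)}\in G^{\Z}$ spreads under $F$, and $F$ is sensitive by Lemma~\ref{spredda}.

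The main obstacle is the backward direction. Sensitivity of $L$ naively gives spreading of some standard basis $\eee^{(j)}\in\hat{G}^{\Z}$ under $L$, but to conclude sensitivity of $F$ one needs a spreading configuration lying in $\Emb(G^{\Z})$, and $\Emb(\eee^{(j)})=p^{k_1-k_j}\eee^{(j)}$ is only a scalar multiple of $\eee^{(j)}$; multiplication by $p^{k_1-k_j}$ can in general annihilate entries divisible by $p^{k_j}$. The resolution exploits the precise form of Definition~\ref{degree}, which counts only monomials whose coefficients are units modulo $p$ --- exactly those that cannot be killed by $p^{k_1-k_j}$.
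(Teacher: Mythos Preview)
Your proof is correct. The backward direction follows the paper's argument closely---both rely on Lemma~\ref{explosion}, the coprimality-to-$p$ condition in Definition~\ref{degree}, and Lemma~\ref{spredda}---but you make explicit the pigeonhole step needed to fix a single column index $j$ (the paper chooses $(i,j)$ anew for each $\ell$ and then asserts that $\eee^{(j)}$ spreads, which tacitly requires this pigeonhole).

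The forward direction is genuinely different. The paper argues by contrapositive: if $L$ is not sensitive, Proposition~\ref{prop:dich} gives $L^{k+m}=L^{k}$, and then $L^{k+m}\circ\Emb=L^{k}\circ\Emb$ together with the injectivity of $\Emb$ forces $F^{k+m}=F^{k}$, whence $F$ is equicontinuous. You instead argue directly that $\Emb$ is an isometry and that for an additive CA sensitivity is equivalent to sensitivity at $0$, and then transport witnesses through $\Emb$. Your route is more elementary (it avoids Proposition~\ref{prop:dich} entirely), while the paper's route exhibits the stronger structural fact that eventual periodicity of the global maps is inherited along $\Emb$.
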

\begin{proof}
$\Longrightarrow:$ Assume that $(\hat{G}^\Z, L)$ is not sensitive. Then, by Proposition~\ref{prop:dich}, there exist two integers $k\in\N$ and $m>0$ such that $L^{k+m}=L^k$. Therefore, we get $\Emb\circ F^{k+m}= L^{k+m}\circ  \Emb=L^k\circ \Emb= \Emb \circ F^k$. Since $\Emb$ is injective, it holds that $F^{k+m}=F^k$ and so $(G^{\Z},F)$ is not sensitive.\\
$\Longleftarrow:$ Assume that $(\hat{G}^\Z, L)$ is sensitive and for any natural $k$ let $A^{k}= (a^{(k)}_{i,j})_{1\leq i\leq n,\, 1\leq j\leq n}$  be the $k$-th power of $A\in \LPKuno^{n\times n}$, where $A$ is the matrix associated to $(\hat{G}^\Z, L)$. We are going to show that at least one configuration among $\eee^{(1)}, \ldots, \eee^{(n)}$ spreads under $F$.  
Choose arbitrarily $\ell\in\N$. By Lemma~\ref{explosion}, there exist an integer $m\geq 1$ and one entry $(i,j)$ such that either $deg^-[a^{(m)}_{i,j}]<-\ell$ or $deg^+[a^{(m)}_{i,j}]>\ell$.  WLOG suppose that $deg^+[a^{(m)}_{i,j}]>\ell$. The $i$--th component of $\P_{F^m(\eee^{(j)})}(X)$ is the well defined polynomial $p^{k_i-k_1}\cdot p^{k_1-k_j} \cdot a^{(m)}_{i,j}$. Since $deg^+[a^{(m)}_{i,j}]>\ell$, we can state that $\eee^{(j)}$ spreads under $F$.  By Lemma~\ref{spredda}, it follows that $(G^{\Z},F)$ is sensitive.
%
\end{proof}
As immediate consequence of Theorem~\ref{teo:sens} we can state that the dichotomy between sensitivity and equicontinuity also holds for additive CA.
\begin{corollary}
Any additive CA over a finite abelian group is sensitive to the initial conditions if and only if it is not equicontinuous.
\end{corollary}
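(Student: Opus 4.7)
The plan is to chain together the main results of this section. The forward direction (sensitive implies not equicontinuous) is a general fact about DTDS and demands only a one-line argument: a sensitivity constant $\varepsilon > 0$ immediately rules out the existence of any equicontinuity radius $\delta > 0$ matching this $\varepsilon$, since sensitivity and equicontinuity are then verbatim contradictory.

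For the nontrivial direction (not equicontinuous implies sensitive), I will argue contrapositively: assuming $F$ is not sensitive, I will prove $F$ is equicontinuous. First, Theorem~\ref{teo:sens} transfers non-sensitivity from $F$ to the associated LCA $(\hat{G}^{\Z}, L)$. Next, Proposition~\ref{prop:dich} (applied to $L$, which is a LCA over $\hat{G} = (\zetapku)^n$) turns non-sensitivity of $L$ into the finiteness of $\{L^k : k \geq 1\}$, so there exist $k \geq 1$ and $m \geq 1$ with $L^{k+m} = L^k$.

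The final task is to push this finiteness back along $\Psi$ to conclude about $F$. The commutative diagram from Remark~\ref{comm} iterates to $L^j \circ \Psi = \Psi \circ F^j$ for every $j \in \N$. Applying this at $j = k$ and $j = k+m$ gives $\Psi \circ F^{k+m} = \Psi \circ F^k$, and the injectivity of $\Psi$ yields $F^{k+m} = F^k$. Hence $\{F^k : k \in \N\}$ is finite. To convert this into equicontinuity of $F$, I use compactness of $G^{\Z}$: each of these finitely many powers is continuous on a compact metric space, hence uniformly continuous, and taking a common $\delta$ across the finite list delivers the equicontinuity modulus for any prescribed $\varepsilon > 0$.

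I do not anticipate any real obstacle here. The only step requiring an argument beyond citation is the final passage from a finite set of powers to equicontinuity, and this is a standard compactness argument that does not depend on the additive structure.
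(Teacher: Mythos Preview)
Your proof is correct and follows essentially the same route as the paper, which merely declares the corollary an ``immediate consequence of Theorem~\ref{teo:sens}'' without further detail. Your argument spells out precisely what the paper leaves implicit: the passage from non-sensitivity of $F$ (via Theorem~\ref{teo:sens} and Proposition~\ref{prop:dich}) to the relation $F^{k+m}=F^k$ is already contained in the proof of Theorem~\ref{teo:sens}, and your final compactness step converting finitely many powers into equicontinuity is the missing link the paper does not write down.
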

The following decidability result follows from Theorem~\ref{teo:sens} and the decidability of sensitivity for LCA. 
\begin{corollary}
Equicontinuity and sensitivity to the initial conditions are  decidable for additive CA over a finite abelian group.
\end{corollary}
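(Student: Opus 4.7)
The plan is to reduce the problem to the decidability result for linear cellular automata (Theorem~\ref{declca}) by means of the embedding $\Emb$ and the equivalence established in Theorem~\ref{teo:sens}. First I would observe that, by the fundamental theorem of finite abelian groups together with the decomposition argument recalled at the beginning of Section~4.4, an additive CA $(G^{\Z}, F)$ over an arbitrary finite abelian group $G$ splits as a direct sum of additive CA whose alphabets have the shape $\zetapku \times \cdots \times \zetapkn$ (with all the $k_i$'s powers of the \emph{same} prime $p$). Since the direct sum of dynamical systems on compact spaces is sensitive if and only if at least one of its factors is sensitive, and the dichotomy between sensitivity and equicontinuity will hold for each factor (by the corollary of Theorem~\ref{teo:sens}), it suffices to decide sensitivity for each component separately. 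Moreover, producing the decomposition is effective, since the orders of the cyclic summands and the action of the local rule on a set of generators can be read off from any finite description of $F$.

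Next, I would fix one such component $G = \zetapku \times \cdots \times \zetapkn$ and effectively build the linear cellular automaton $(\hat{G}^{\Z}, L)$ associated with $(G^{\Z}, F)$ via the embedding $\Emb$: the local rule of $L$ is specified by the matrices $A_z = (a^{(z)}_{i,j})$ with $a^{(z)}_{i,j} = p^{k_j - k_i} \cdot \delta_z(e_j)^i$, which can be written down explicitly from the endomorphisms $\delta_{-r}, \ldots, \delta_{r}$ defining the local rule of $F$. By Theorem~\ref{teo:sens}, $(G^{\Z}, F)$ is sensitive to the initial conditions if and only if $(\hat{G}^{\Z}, L)$ is. Now $(\hat{G}^{\Z}, L)$ is an LCA over $\K^n$ with $\K = \zetapku$, so Theorem~\ref{declca} (whose proof already uses Theorem~\ref{thm.finpowmat.main}) gives a decision procedure for its sensitivity. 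Composing the effective reduction with this procedure decides sensitivity for $(G^{\Z}, F)$, and the dichotomy then yields decidability of equicontinuity as well.

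The main obstacle is not algorithmic but conceptual: one must be sure that each step of the reduction is effective. The decomposition of $G$ into $p$-primary components and the rewriting of the local rule in the form~\eqref{decomp} are straightforward linear algebra over finite abelian groups; the construction of $\Emb$ and of the matrices $A_z$ is purely formal (one only has to check that the entries $p^{k_j-k_i}\cdot \delta_z(e_j)^i$ are well defined in $\zetapku$, which is immediate from $k_1 \geq k_i$ and the fact that $\delta_z$ is an endomorphism of $G$). The genuinely nontrivial content lies upstream, namely Theorem~\ref{teo:sens} (which depends on Lemmas~\ref{spredda} and~\ref{explosion}) and Theorem~\ref{declca} (which rests on Theorem~\ref{thm.finpowmat.main}); once those are in hand, the decidability of equicontinuity and sensitivity for additive CA over a finite abelian group follows essentially formally.
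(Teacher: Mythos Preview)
Your proposal is correct and follows essentially the same approach as the paper, which simply writes ``Use Theorem~\ref{declca} and~\ref{teo:sens}.'' You have merely made explicit the reduction to the $p$-primary case (already stated as a WLOG assumption in the paper just before the definition of $\Emb$) and the effectivity of each construction, both of which the paper leaves implicit.
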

\begin{proof}
Use Theorem~\ref{declca} and ~\ref{teo:sens}.
\end{proof}
\subsubsection{Surjectivity and Injectivity for Additive Cellular Automata}
We now study injectivity and surjectivity for additive CA. 
\begin{lemma}
\label{immagine}
Let $(\hat{G}^\Z, L)$ be any LCA over $\hat{G}$. For any configuration $\bbb\in\hat{G}^\Z$ with $\bbb\neq 0$ and $L(\bbb)=0$ there exists a configuration $\bbb'\in\Psi({G}^\Z)$ such that $\bbb'\neq 0$ and $L(\bbb')=0$. In particular, if $\bbb$ is finite then $\bbb'$ is too.
\end{lemma}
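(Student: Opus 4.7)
The plan is to take $\bbb' := p^e \bbb$ for a carefully chosen exponent $e \in \N$, exploiting only the $\zetapku$-linearity of $L$, which gives $L(p^e \bbb) = p^e L(\bbb) = 0$ for free. All of the work is in choosing $e$ so that $\bbb'$ lands in $\Psi(G^{\Z})$ without becoming zero.

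The first step is to identify $\Psi(G^{\Z})$ explicitly. By the definition of $\emb$, a configuration $\ccc = (\ccc^1, \ldots, \ccc^n) \in \hat{G}^{\Z}$ lies in $\Psi(G^{\Z})$ if and only if, for each $i \in \{1,\ldots,n\}$, every entry of the $i$-th component $\ccc^i$ is divisible by $p^{k_1-k_i}$ in $\zetapku$. Equivalently, viewing $\ccc^i$ as a coefficient-wise element of $\zetapku^{\Z}$, we require $p^{k_1-k_i}\mid \ccc^i$. This is the only constraint that has to be met; injectivity of $\emb$ is automatic from the construction.

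Next, for each index $i$ with $\bbb^i \neq 0$, let $v_i \in \{0,1,\ldots,k_1-1\}$ be the minimum $p$-adic valuation occurring among the nonzero coefficients of $\bbb^i$ (equivalently, the largest $v$ such that $p^v \mid \bbb^i$). Let $v := \min\{v_i : \bbb^i \neq 0\}$, which is well-defined since $\bbb \neq 0$, and set $e := k_1 - 1 - v \geq 0$ and $\bbb' := p^e \bbb$. The required properties then drop out by a short case split. For each index $i$ with $v_i = v$, we have $\bbb'^i = p^{k_1-1}(p^{-v}\bbb^i)$, which is divisible by $p^{k_1-1}$ and hence by $p^{k_1-k_i}$ (since $k_i \geq 1$). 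For each $i$ with $\bbb^i = 0$, or with $v_i > v$ (so that $e+v_i \geq k_1$), we get $\bbb'^i = 0$, which is vacuously divisible by $p^{k_1-k_i}$. Hence $\bbb' \in \Psi(G^{\Z})$. Nontriviality follows by fixing $i_0$ attaining the minimum: writing $\bbb^{i_0} = p^{v} \ccc$ for some $\ccc$ with at least one coefficient that is a unit $u$ in $\zetapku$, the corresponding coefficient of $\bbb'^{i_0}$ equals $p^{k_1-1}u$, which is nonzero in $\zetapku$.

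Finally, the finiteness clause is immediate: scalar multiplication by $p^e$ cannot enlarge the support of a configuration, so $\bbb$ finite forces $\bbb'$ finite. The only step with any real content is the identification of $\Psi(G^{\Z})$ via componentwise divisibility by $p^{k_1-k_i}$, together with the $p$-adic bookkeeping needed when the $k_i$'s differ; once those are in place the choice of $e = k_1-1-v$ is essentially forced, since it is the unique value that pushes the "smallest" component up to valuation exactly $k_1-1$ while annihilating everything with strictly larger valuation.
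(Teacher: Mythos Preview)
Your proof is correct and follows essentially the same approach as the paper's: both take $\bbb' = p^e \bbb$ for the unique exponent $e$ making $\bbb'$ nonzero but killed by one further multiplication by $p$, which forces every entry of $\bbb'$ to be divisible by $p^{k_1-1}$ and hence by each $p^{k_1-k_i}$. The only difference is presentational: the paper finds $e$ by iteratively multiplying by $p$ until the next step would vanish, whereas you compute $e = k_1 - 1 - v$ directly from the minimum $p$-adic valuation $v$ of the entries of $\bbb$; these give the same $e$.
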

\begin{proof}
Let $\bbb\in\hat{G}^\Z$ any configuration with $\bbb\neq 0$ and $L(\bbb)=0$. 
Set $\bbb^{(1)}=p \cdot \bbb$. If $\bbb^{(1)}=0$ then for every $i\in\Z$ each component of $\bbb_i$ has $p^{k_1-1}$ as factor. So, $\bbb\in\Psi({G}^\Z)$ and $\bbb'=\bbb$ is just one possible configuration the thesis requires to exhibit. Otherwise, by repeating the same argument, set  $\bbb^{(2)}=p \cdot \bbb^{(1)}$. If $\bbb^{(2)}=0$ then, for every $i\in\Z$, each component  of $\bbb^{(1)}_i$ has $p^{k_1-1}$ as factor and so $\bbb^{(1)}\in\Psi({G}^\Z)$. Since $L(\bbb^{(1)})=0$, a configuration we are looking for is $\bbb'=\bbb^{(1)}$. After $k-1$ iterations,  \ie, once we get $\bbb^{(k_1-1)}=p \cdot \bbb^{(k-2)}$ (with $\bbb^{(k-2)}\neq 0$), if $\bbb^{(k_1-1)}=0$ holds we conclude that $\bbb'=\bbb^{(k-2)}$ by using  the same argument of the previous steps.  Otherwise, by definition, for every $i\in\Z$ each component of $\bbb^{(k_1-1)}_i$ itself certainly contains $p^{k_1-1}$ as factor.  Therefore,  $\bbb^{(k_1-1)}\in\Psi({G}^\Z)$. Moreover, $L(\bbb^{(k_1-1)})=0$. Hence, we can set $\bbb'=\bbb^{(k_1-1)}$ and this concludes the proof.
\end{proof}
The following lemma will be useful for studying both surjectivity and other properties.
\begin{lemma}
\label{lem:surj}
Let $(G^{\Z},F)$ and $(\hat{G}^\Z, L)$ be any additive CA over $G$ and any LCA over $\hat{G}$, respectively, such that $L\circ\Emb=\Emb\circ F$. Then, the CA $(G^{\Z},F)$ is surjective if and only if $(\hat{G}^\Z, L)$ is.
\end{lemma}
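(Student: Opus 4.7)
The plan is to reduce surjectivity of both $F$ and $L$ to the absence of nontrivial finite configurations in their respective kernels, and then to transport such finite configurations back and forth across the embedding $\Emb$ using Lemma~\ref{immagine}.

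The key input is the classical Garden-of-Eden/Myhill-type theorem for additive cellular automata over a finite abelian group: such an additive CA is surjective if and only if it is pre-injective, which for an additive CA is in turn equivalent to the statement that its kernel contains no nonzero finite configuration. I would apply this fact to both $(G^{\Z},F)$ and $(\hat{G}^{\Z},L)$. Thus the lemma reduces to proving the following equivalence: there exists a nonzero finite $\bbb\in\hat{G}^{\Z}$ with $L(\bbb)=0$ if and only if there exists a nonzero finite $\ccc\in G^{\Z}$ with $F(\ccc)=0$.

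For the easier direction, suppose $F$ is not surjective, so there is a nonzero finite $\ccc\in G^{\Z}$ with $F(\ccc)=0$. Then $\Emb(\ccc)\in\hat{G}^{\Z}$ is again finite, and it is nonzero because $\Emb$ is injective. The commutation $L\circ\Emb=\Emb\circ F$ yields $L(\Emb(\ccc))=\Emb(F(\ccc))=\Emb(0)=0$, so $L$ has a nonzero finite configuration in its kernel and therefore is not surjective. For the harder direction, suppose $L$ is not surjective, so there is a nonzero finite $\bbb\in\hat{G}^{\Z}$ with $L(\bbb)=0$. Here Lemma~\ref{immagine} is exactly the bridge we need: it produces a nonzero finite $\bbb'\in\Emb(G^{\Z})$ with $L(\bbb')=0$. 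Writing $\bbb'=\Emb(\ccc)$ for some $\ccc\in G^{\Z}$, the injectivity of $\Emb$ forces $\ccc\ne 0$, and by construction $\ccc$ is finite because the preimage of a finite element under the entrywise embedding is finite. The commutativity $\Emb\circ F=L\circ\Emb$ together with $L(\bbb')=L(\Emb(\ccc))=\Emb(F(\ccc))=0$ and the injectivity of $\Emb$ give $F(\ccc)=0$, so $F$ is not surjective.

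The main obstacle is really the single step of producing a finite configuration in $\Emb(G^{\Z})\cap\ker L$ from a generic finite configuration in $\ker L$; but this is precisely what Lemma~\ref{immagine} accomplishes, so once that lemma is in hand the proof is short. The only other point worth double-checking is that the finite-kernel criterion for surjectivity applies to $L$: $L$ is an endomorphism of $\hat{G}^{\Z}$ which is an additive CA over the finite abelian group $\hat{G}$, so the criterion applies. Everything else is routine diagram-chasing through $L\circ\Emb=\Emb\circ F$.
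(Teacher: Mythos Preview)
Your proof is correct and follows essentially the same route as the paper's: both reduce surjectivity to the absence of nonzero finite configurations in the kernel (via the Garden-of-Eden/pre-injectivity equivalence for additive CA), then push such configurations across the commuting square using injectivity of $\Emb$ in one direction and Lemma~\ref{immagine} in the other. The paper phrases the criterion as ``not injective on the finite configurations'' rather than naming the Garden-of-Eden theorem explicitly, but the argument is identical.
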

\begin{proof}
$\Longleftarrow:$ Assume that $F$ is not surjective. Then, $F$ is not injective on the finite configurations, \ie, there exist two distinct and finite $\ccc',\ccc''\in G^{\Z}$ with $F(\ccc')=F(\ccc'')$. Therefore, the element $\ccc=\ccc'-\ccc''\in G^{\Z}$ is a finite configuration such that $\ccc\neq 0$ and $F(\ccc)=0$. So, we get both $\Emb(\ccc)\neq 0$ and $L(\Emb(\ccc))=\Emb(F(\ccc))=0$. Since $\Emb(\ccc)\neq 0$, it follows that $L$ is not surjective.
\\
$\Longrightarrow:$ 
Assume that $L$ is not surjective. Then, it is not injective on the finite configurations. Thus, there exist a finite configuration $\bbb\neq 0$ with $L(\bbb)=0$. By Lemma~\ref{immagine}, there exists a finite configuration $\bbb'\in \Psi(G^{\Z})$ such that $\bbb'\neq 0$ and $L(\bbb')=0$. Let $\ccc\in G^{\Z}$ be the finite configuration such that $\Emb(\ccc)=\bbb'$. Clearly, it holds that  $\ccc\neq 0$. We get $\Emb(F(\ccc))=L(\Emb(\ccc))=0$. Since $\Emb$ is injective, it follows that $F(\ccc)=0$. Therefore, we conclude that $F$ is not surjective.
\end{proof}
Next two theorems state that surjectivity and injectivity behave as sensitivity when looking at an additive CA over $G$ and the associated LCA via the embedding $\Emb$.
\begin{theorem}
\label{teo:surj}
Let $(G^{\Z},F)$ be any additive CA over $G$ and let $(\hat{G}^\Z, L)$ be the LCA associated with it via the embedding $\Emb$. Then, the CA $(G^{\Z},F)$ is surjective if and only if $(\hat{G}^\Z, L)$ is.
\end{theorem}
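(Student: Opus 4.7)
The plan is to observe that Theorem \ref{teo:surj} is an immediate consequence of Lemma \ref{lem:surj} together with Remark \ref{comm}. Indeed, Lemma \ref{lem:surj} already establishes the surjectivity equivalence $F$ surjective $\iff$ $L$ surjective under the \emph{hypothesis} that $L\circ\Emb = \Emb\circ F$. On the other hand, the LCA $(\hat{G}^\Z,L)$ associated with $(G^\Z,F)$ via the embedding $\Emb$ is, by Remark \ref{comm}, defined precisely so that this commuting-square identity $L\circ\Emb = \Emb\circ F$ holds. So nothing is left to prove beyond invoking these two facts.

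Concretely, I would write the proof as follows: fix $(G^{\Z},F)$ and let $(\hat{G}^\Z,L)$ be the LCA associated with it via the embedding $\Emb$. By Remark \ref{comm}, we have $L\circ\Emb = \Emb\circ F$. Hence the hypothesis of Lemma \ref{lem:surj} is satisfied, and applying that lemma yields that $(G^{\Z},F)$ is surjective if and only if $(\hat{G}^\Z,L)$ is surjective. This is exactly the claim of Theorem \ref{teo:surj}.

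There is no real obstacle here, since the substantive work has already been carried out in Lemma \ref{lem:surj} (which handles both directions via the finite-configuration argument and Lemma \ref{immagine}) and in Remark \ref{comm} (which is a direct computation from the definitions of $\Emb$, $\emb$, $F$ and $L$ via $A = \sum_{z=-r}^{r} A_z X^{-z}$, using that the entries $a^{(z)}_{i,j} = p^{k_j-k_i}\cdot\delta_z(e_j)^i$ of the matrix $A_z$ are exactly chosen so that multiplication by $A$ on $\Emb(\ccc)$ matches $\Emb$ applied to $F(\ccc)$). Thus the proof of Theorem \ref{teo:surj} is a one-line invocation. The only thing to double-check is that Remark \ref{comm} truly holds as stated, which follows from the entrywise definition of $\emb$ and the fact that each $a^{(z)}_{i,j}$ is well defined in $\zetapku$ because $\delta_z$ being an endomorphism of $G$ forces $p^{k_i}\delta_z(e_j)^i = 0$, hence $p^{k_j-k_i}\delta_z(e_j)^i$ depends only on the residue of $\delta_z(e_j)^i$ modulo $p^{k_i}$ and makes the associated LCA well-defined.
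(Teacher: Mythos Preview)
Your proposal is correct and follows essentially the same approach as the paper, which proves Theorem~\ref{teo:surj} by the one-line ``Use Lemma~\ref{lem:surj}.'' You are simply more explicit in noting that Remark~\ref{comm} furnishes the hypothesis $L\circ\Emb=\Emb\circ F$ required by Lemma~\ref{lem:surj}, which the paper leaves implicit.
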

\begin{proof}
Use Lemma~\ref{lem:surj}.
\end{proof}
\begin{theorem}
\label{teo:inj}
Let $(G^{\Z},F)$ be any additive CA and let $(\hat{G}^\Z, L)$ be the LCA associated with it via the embedding $\Emb$. Then, the CA $(G^{\Z},F)$ is injective if and only if $(\hat{G}^\Z, L)$ is.
\end{theorem}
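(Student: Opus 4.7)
The plan is to adapt the argument used for Theorem~\ref{teo:surj}, but in the simpler setting where we don't need to worry about finite configurations. Both $F$ and $L$ are additive (as global rules of additive and linear CAs, respectively), so injectivity of either is equivalent to having trivial kernel. Thus the proof reduces to the statement: there exists $\ccc\in G^{\Z}$ with $\ccc\neq 0$ and $F(\ccc)=0$ if and only if there exists $\bbb\in\hat{G}^{\Z}$ with $\bbb\neq 0$ and $L(\bbb)=0$.

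For the direction $\Longrightarrow$, I would proceed contrapositively: assume $L$ is not injective, so there exists $\bbb\in\hat{G}^{\Z}$ with $\bbb\neq 0$ and $L(\bbb)=0$. The configuration $\bbb$ may not lie in the image of $\Emb$, so I invoke Lemma~\ref{immagine} to replace it by a configuration $\bbb'\in\Emb(G^{\Z})$ with $\bbb'\neq 0$ and $L(\bbb')=0$. Write $\bbb'=\Emb(\ccc)$; since $\Emb$ is injective and $\bbb'\neq 0=\Emb(0)$, we have $\ccc\neq 0$. The commutativity $L\circ\Emb=\Emb\circ F$ from Remark~\ref{comm} gives $\Emb(F(\ccc))=L(\Emb(\ccc))=L(\bbb')=0=\Emb(0)$, and injectivity of $\Emb$ yields $F(\ccc)=0$, witnessing the non-injectivity of $F$.

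For the direction $\Longleftarrow$, I would again argue contrapositively. If $F$ is not injective, additivity provides $\ccc\in G^{\Z}$ with $\ccc\neq 0$ and $F(\ccc)=0$. Then $\Emb(\ccc)\neq 0$ (injectivity of $\Emb$) and $L(\Emb(\ccc))=\Emb(F(\ccc))=\Emb(0)=0$ (commutativity of the diagram), so $L$ has a nonzero element in its kernel and hence is not injective. This direction is essentially a straight diagram chase and is immediate from $L\circ\Emb=\Emb\circ F$ and the injectivity of $\Emb$.

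The only nontrivial step is the invocation of Lemma~\ref{immagine} in the $\Longrightarrow$ direction, which handles the fact that an arbitrary kernel element of $L$ need not come from $\Emb(G^{\Z})$; the lemma supplies the needed divisibility by $p^{k_1-k_i}$ in each component. Apart from that, the whole argument is a short diagram chase and mirrors the structure of the proof of Theorem~\ref{teo:surj}, only without the restriction to finite configurations.
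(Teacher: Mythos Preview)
Your proposal is correct and follows essentially the same approach as the paper: both directions are handled by the contrapositive, the $\Longleftarrow$ direction is a direct diagram chase using injectivity of $\Emb$ and the commutation $L\circ\Emb=\Emb\circ F$, and the $\Longrightarrow$ direction invokes Lemma~\ref{immagine} to pull a nonzero kernel element of $L$ back into $\Emb(G^{\Z})$. The only cosmetic difference is that the paper phrases the $\Longleftarrow$ direction via two distinct configurations with equal image rather than via a nonzero kernel element, but this is equivalent by additivity.
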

\begin{proof}
$\Longleftarrow:$ Assume that $F$ is not injective. Then, there exist two distinct configurations $\ccc,\ccc'\in G^{\Z}$ with $F(\ccc)=F(\ccc')$. We get $L(\Emb(\ccc))=\Emb(F(\ccc))=\Emb(F(\ccc'))= L(\Emb(\ccc'))$ and, since $\Emb$ is injective,  it follows that $L$ is not injective. \\
$\Longrightarrow:$ Assume that $L$ is not injective. Then, there exists a configuration $\bbb\in\hat{G}^\Z$ such that  $\bbb\neq 0$ and $L(\bbb)=0$. By Lemma~\ref{immagine}, there exists a finite configuration $\bbb'\in \Psi(G^{\Z})$ such that $\bbb'\neq 0$ and $L(\bbb')=0$. Let $\ccc\in G^{\Z}$ be the finite configuration such that $\Emb(\ccc)=\bbb'$. Clearly, it holds that  $\ccc\neq 0$. We get $\Emb(F(\ccc))=L(\Emb(\ccc))=0$. Since $\Emb$ is injective, it follows that $F(\ccc)=0$. Since $F(0)=0$, we conclude that $F$ is not injective.
\end{proof}
The decidability of injectivity and surjectivity for LCA combined with the previous two theorems lead to the corresponding decidability result for additive CA.
\begin{corollary}
Surjectivity and injectivity are decidable for additive CA over a finite abelian group.
\end{corollary}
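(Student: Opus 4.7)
The plan is to chain together three prior results from this paper. First, given an additive CA $(G^{\Z}, F)$ over an arbitrary finite abelian group $G$, I would reduce to the case where $G$ is a $p$-group, i.e.\ $G = \zetapku \times \cdots \times \zetapkn$ for a single prime $p$. Indeed, by the fundamental theorem of finite abelian groups one may write $G \cong \bigoplus_{i=1}^{h'} G_i$ with the orders $|G_i|$ pairwise coprime prime powers. Every endomorphism of $G$ then decomposes uniquely as a direct sum of endomorphisms of the $G_i$; applied to each of the local-rule summands $\delta_z$ in~\eqref{decomp}, this shows that $F$ itself splits as a direct sum $F = F_1 \oplus \cdots \oplus F_{h'}$ of additive CA $F_i$ on $G_i^{\Z}$. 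Since $G^{\Z} \cong \prod_i G_i^{\Z}$ coordinatewise, $F$ is surjective (respectively, injective) iff each $F_i$ is, and this reduction is effectively computable from a presentation of $F$.

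Second, assuming $G = \zetapku \times \cdots \times \zetapkn$, I would apply the construction preceding Theorem~\ref{teo:surj} to produce the LCA $(\hat{G}^{\Z}, L)$ associated with $(G^{\Z}, F)$ via the embedding $\Emb$, where $\hat{G} = (\zetapku)^n$. The defining matrices $A_{-r}, \ldots, A_r \in (\zetapku)^{n \times n}$ are read off directly from the endomorphisms $\delta_{-r}, \ldots, \delta_r$ by the explicit formula $a^{(z)}_{i,j} = p^{k_j - k_i}\,\delta_z(e_j)^i$. Then Theorem~\ref{teo:surj} (respectively, Theorem~\ref{teo:inj}) equates surjectivity (respectively, injectivity) of $F$ with that of $L$.

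Finally, Theorem~\ref{decinjsurjlca} asserts that both properties are decidable for LCA over $\K^n$ with $\K = \zetapku$, by reducing them to the analogous one-dimensional questions for the LCA whose fps is $\det A$ and then invoking~\cite{ItoOsa83}. Running this procedure on the LCA $L$ produced in the second step thus settles the question for the original additive CA $F$. All the technical heavy lifting is already packed into Theorems~\ref{teo:surj}, \ref{teo:inj}, and~\ref{decinjsurjlca}; the only thing to verify is that each stage of the chain — the primary decomposition of $G$, the computation of the matrices $A_z$ from the $\delta_z$, and the invocation of the LCA decision procedure — is algorithmic given a finite presentation of $F$, which is straightforward since every transformation involved is elementary.
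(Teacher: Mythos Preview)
Your proposal is correct and follows essentially the same route as the paper: the paper's one-line proof just cites Theorems~\ref{decinjsurjlca}, \ref{teo:surj}, and~\ref{teo:inj}, relying on the standing WLOG reduction (made earlier in the section) to the $p$-group case $G=\zetapku\times\cdots\times\zetapkn$, which you have simply spelled out explicitly. The only addition you make is the observation that each step is effectively computable, which the paper leaves implicit but is indeed straightforward.
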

\begin{proof}
It immediately follows from Theorem~\ref{decinjsurjlca},~\ref{teo:inj}, and~\ref{teo:inj}.
\end{proof}
\subsubsection{Topological transitivity and ergodicity}
We start by proving that the embedding $\Emb$ also preserves topological transitivity between an additive CA over $G$ and the associated LCA.
\begin{theorem}
\label{teo:trans}
Let $(G^{\Z},F)$ be any additive CA over $G$ and let $(\hat{G}^\Z, L)$ be the LCA associated with it via the embedding $\Emb$. Then, the CA $(G^{\Z},F)$ is topologically transitive if and only if $(\hat{G}^\Z, L)$ is.
\end{theorem}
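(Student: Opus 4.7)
The plan is to reduce topological transitivity of $F$ (resp.\ $L$) to a pair of surjectivity statements via Theorem~\ref{transerg}, and then transfer each of these surjectivity statements across $\Emb$ using Lemma~\ref{lem:surj}. The key observation is that Lemma~\ref{lem:surj} only requires the commuting diagram $L\circ\Emb=\Emb\circ F$; it does not use the specific construction of $L$ from $F$. This makes the lemma applicable to a whole family of derived additive CA/LCA pairs, not just $F$ and $L$ themselves.

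First, I will use Theorem~\ref{transerg} (applied to the additive CA $F$ on $G^{\Z}$, and again to the LCA $L$ on $\hat{G}^{\Z}$, which is itself an additive CA over the finite abelian group $\hat{G}^{\Z}$): $F$ is topologically transitive if and only if $F$ is surjective and $F^k-I$ is surjective for every $k\in\N$; and similarly $L$ is topologically transitive if and only if $L$ is surjective and $L^k-I$ is surjective for every $k\in\N$. Thus it suffices to match up these two surjectivity conditions one by one across the embedding $\Emb$.

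Next, I will record that for every $k\in\N$, the pair $(F^k-I,\,L^k-I)$ again satisfies the commuting diagram with $\Emb$. Indeed, from $L\circ\Emb=\Emb\circ F$ one gets $L^k\circ\Emb=\Emb\circ F^k$ by an easy induction on $k$, and $\Emb$ is a group homomorphism (it is the coordinatewise extension of the group homomorphism $\emb$), so
\[
(L^k-I)\circ\Emb \;=\; L^k\circ\Emb - \Emb \;=\; \Emb\circ F^k - \Emb\circ I \;=\; \Emb\circ (F^k-I).
\]
Moreover $F^k-I$ is an additive CA over $G$ and $L^k-I$ is a LCA over $\hat{G}$. Therefore Lemma~\ref{lem:surj}, applied to the pair $(F^k-I,\,L^k-I)$, shows that $F^k-I$ is surjective if and only if $L^k-I$ is surjective. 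Applying the same lemma to the pair $(F,L)$ itself (or invoking Theorem~\ref{teo:surj}) handles the remaining surjectivity condition on $F$ versus $L$.

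Combining the two equivalences from Theorem~\ref{transerg} with these termwise transfers yields the desired biconditional: $F$ is topologically transitive iff ($F$ is surjective and $F^k-I$ is surjective for all $k$) iff ($L$ is surjective and $L^k-I$ is surjective for all $k$) iff $L$ is topologically transitive. The only subtle point, and the one I would be most careful with, is verifying that Lemma~\ref{lem:surj} truly is formulated in the generality I need (applicable to any pair satisfying the commuting square, not only to a pair built from the canonical construction of the associated LCA); the computation above confirms that the derived pair $(F^k-I,\,L^k-I)$ meets the hypothesis of that lemma, so no extra work is required.
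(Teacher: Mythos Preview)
Your proof is correct and follows essentially the same route as the paper: establish $(L^k-I)\circ\Emb=\Emb\circ(F^k-I)$ from the commuting square and the additivity of $\Emb$, then invoke Lemma~\ref{lem:surj} (which is indeed stated for any pair satisfying the commuting relation) together with Theorem~\ref{teo:surj} and Theorem~\ref{transerg}. Your write-up is simply more explicit than the paper's about why Lemma~\ref{lem:surj} applies to the derived pairs $(F^k-I,\,L^k-I)$.
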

\begin{proof}
Since  $\Emb\circ F=L\circ\Emb$, for every $k\in\N$ it holds that $\Emb\circ (F^k-I)=\Emb\circ F^k - \Emb= L^k\circ\Emb - \Emb= (L^k-I)\circ \Emb$. By Lemma~\ref{lem:surj}
, $F^k-I$ is surjective iff  $L^k-I$ is. Theorem~\ref{teo:surj} and~\ref{transerg} conclude the proof.
\end{proof}
As a final result, we get the decidability of many mixing and ergodic properties for additive CA over any finite abelian group, including topological transitivity and ergodicity. 
\begin{corollary}
All the following properties are decidable for additive CA over any finite abelian group:
\begin{enumerate}
\item topological transitivity;
\item ergodicity;
\item  topological mixing;
\item weak topological transitivity;
\item total transitivity;
\item weak ergodic mixing;
\item ergodic mixing.
\end{enumerate}
\end{corollary}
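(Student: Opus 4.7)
The plan is to reduce everything to the already-established decidability of topological transitivity for LCA. First, Theorem~\ref{transerg} shows that for any additive CA $F$ over a finite abelian group, properties (1)--(7) listed in the corollary are all mutually equivalent (they are precisely the ones appearing in the equivalence chain of Theorem~\ref{transerg}, with the exception of the auxiliary condition ``$F$ and $F^k - I$ are surjective for every $k$'', which we do not need to test directly). Hence it suffices to exhibit a decision procedure for one of them, say topological transitivity.

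Second, given any additive CA $(G^{\Z}, F)$ over a finite abelian group $G$, decompose $G$ as in the discussion preceding Theorem~\ref{teo:sens}, so that WLOG we may assume $G = \zetapku \times \cdots \times \zetapkn$ with $k_1 \geq k_2 \geq \cdots \geq k_n$, where $p$ is a fixed prime. Construct the associated LCA $(\hat{G}^{\Z}, L)$ over $\hat{G} = (\zetapku)^n$ via the embedding $\Psi$. This construction is effective: the defining matrices $A_{-r}, \ldots, A_r$ of $L$ can be computed explicitly from the endomorphisms $\delta_{-r}, \ldots, \delta_r$ appearing in the decomposition~\eqref{decomp} of the local rule of $F$.

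Third, apply Theorem~\ref{teo:trans}: $F$ is topologically transitive if and only if $L$ is. Since $L$ is an LCA over $\K^n$ with $\K = \zetapku$, Theorem~\ref{transerg} tells us that topological transitivity of $L$ is decidable. Thus topological transitivity of $F$ is decidable, and, by the equivalence noted in the first paragraph, so are all the remaining properties (2)--(7). In the case of a general finite abelian group, we first split $F$ into the direct sum of its components over the ``coprime blocks'' of the primary decomposition of $G$ (as described in the paragraph preceding the three-case analysis), apply the above procedure to each component, and combine: topological transitivity of a finite product DTDS is equivalent to topological transitivity of every factor, and similarly for the other properties listed, so the overall decision procedure is immediate. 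There is no real obstacle here beyond bookkeeping: all the analytic content resides in Theorem~\ref{teo:trans} and Theorem~\ref{transerg}, which have already been proved.
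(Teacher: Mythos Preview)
Your proof is correct and follows essentially the same route as the paper, which simply invokes Theorems~\ref{teo:equiv} and~\ref{teo:trans}; you have merely spelled out the reduction to the single-prime case that the paper handles via its standing WLOG assumption. One small caveat: the assertion that ``topological transitivity of a finite product DTDS is equivalent to topological transitivity of every factor'' is false for general DTDS, but in the additive-CA setting it does hold via the surjectivity criterion (item~3 of Theorem~\ref{transerg}), so your combination step is justified.
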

\begin{proof}
It is an immediate consequence of Theorem~\ref{teo:equiv} and~\ref{teo:trans}.
\end{proof}

\end{document}